\numberwithin{equation}{section}
\numberwithin{table}{section}
\newtheorem{theorem}{Theorem}[section]
\newtheorem{lemma}[theorem]{Lemma}
\newtheorem{proposition}[theorem]{Proposition}
\newtheorem{assumption}[theorem]{Assumption}
\theoremstyle{definition}
\newtheorem{definition}[theorem]{Definition}
\theoremstyle{remark}
\begin{document}
	\title[Mixed method for Quad-curl]
	{Analysis of an interior penalty DG method for the quad-curl problem}
	
	\author{Gang Chen}
	\address{
		School of Mathematics, Sichuan University, Chengdu 610064, China, and School of Mathematical Sciences, University of Electronic Science and Technology of China, Sichuan 611731, China}
	\email{cglwdm@scu.edu.cn}

	\author{Weifeng Qiu}
	\address{Department of Mathematics, City University of Hong Kong, 83 Tat Chee Avenue, Hong Kong, China}
	\email{weifeqiu@cityu.edu.hk}
	
	\author{Liwei Xu}
	\address{School of Mathematical Sciences, University of Electronic Science and Technology of China, Sichuan 611731, China}
	\email{xul@uestc.edu.cn}
	
	
	\subjclass[2010]{65L60, 65N30, 46E35, 52B10, 26A16}
	
	\date{}
	
	\dedicatory{}

	\begin{abstract}
	The quad-curl term is an essential part of the resistive magnetohydrodynamic (MHD) equation and the fourth order inverse electromagnetic scattering problem, which are both of great significance in science and engineering. It is desirable to develop efficient and practical numerical methods for the quad-curl problem. In this paper, we first present some new regularity results for the quad-curl problem on Lipschitz polyhedron domains and then propose a mixed finite element method for solving the quad-curl problem. With a {\em novel} discrete Sobolev imbedding inequality for the piecewise polynomials, we obtain stability results and derive error estimates based on a relatively low regularity assumption of the exact solution.
	\end{abstract}
	
	\keywords{Mixed finite element method; Quad-curl problem; Lipschitz domain; Low regularity; Discrete Sobolev imbedding inequality}

	\maketitle

\section{Introduction}
\label{intro}
Let $\Omega$ be a bounded simply connected Lipschitz polyhedron in $\mathbb{R}^3$ with connected boundary $\partial\Omega$. We consider the following quad-curl (fourth order) problem: find the vector $\bm u$ and the Lagrange multiplier $p$ such that
\begin{subequations}\label{PDE:orignial}
	\begin{align}
	\nabla\times(\nabla\times(\nabla\times(\nabla\times\bm u)))+\nabla p&=\bm f&\text{ in }\Omega,\label{o1}\\
	\nabla\cdot\bm u&=0&\text{ in }\Omega,\label{o2}\\
	\bm n\times\bm u&=\bm 0 &\text{ on }\partial\Omega,\label{o3}\\
	\bm n\times(\nabla\times\bm u)&=\bm 0&\text{ on }\partial\Omega,\label{o4}\\
	p&=0&\text{ on }\partial\Omega.\label{o5}
	\end{align}
	Here $\bm f\in [L^2(\Omega )]^3$, the vector $\bm n$ denotes the unit outer normal on $\partial\Omega$.
\end{subequations}
This model problem arises in many different applications, such as in the resistive magnetohydrodynamics (MHD) and
{\color{black} the Maxwell’s transmission eigenvalue theory.}

The resistive MHD system reads (\cite{MR2813342,MR3041683}): find the velocity $\bm u$, the pressure $p$ and the magnetic induction field $\bm B$ such that
\begin{subequations}\label{SMHD}
	\begin{align}
	\rho(\bm u_t+(\bm u\cdot\nabla)\bm u)+\nabla p&=\frac{1}{\mu_0}(\nabla\times\bm B)+\mu\Delta\bm u&\text{in }\Omega,\\
	\bm B_t-\nabla\times(\bm u\times\bm B)&=-\frac{\eta}{\mu_0}(\nabla\times(\nabla\times\bm B)) \\
	&\quad-\frac{d_i}{\mu_0}\nabla\times((\nabla\times\bm B)\times \bm B  )\nonumber\\
	&\quad-\frac{\eta_2}{\mu_0}\nabla\times(
	\nabla\times(\nabla\times(\nabla\times\bm B))
	)&\text{in }\Omega,\nonumber \\
	\nabla\cdot\bm u&=0&\text{in }\Omega,\\
	\nabla\cdot\bm B&=0&\text{in }\Omega,
	\end{align}
	with some proper boundary conditions.
\end{subequations}
Here, $\rho$ is the mass density, $\eta$ is the resistivity, $\eta_2$ is the hyper-resistivity,
$\mu_0$ is the magnetic permeability of free space, and $\mu$ is the viscosity.

In the inverse electromagnetic scattering theory, the transmission eigenvalue problem for the anisotropic
Maxwell  equations can be formulated in the following fourth order problem (\cite{MR2970278}):  find the vector $\bm u$ and the number $k$ such that
\begin{subequations}\label{inverse}
	\begin{align}
	(\nabla\times(\nabla\times)-k^2N)(N-I)^{-1}
	(\nabla\times(\nabla\times\bm u)-k^2\bm u)&=0&\text{in }\Omega,\\
	\bm n\times\bm u&=\bm 0&\text{on }\partial\Omega,\\
	\bm n\times(\nabla\times\bm u)&=\bm 0&\text{on }\partial\Omega,
	\end{align}
\end{subequations}
where $N$ is a given real matrix field and $I$ is the identity matrix. The leading term in both \eqref{SMHD} and \eqref{inverse} is $\nabla\times( \nabla\times( \nabla\times(\nabla\times\bm u))  )$.

There are vast literatures on numerical methods solving the MHD model {\em without} the quad-curl term $\nabla\times( \nabla\times( \nabla\times(\nabla\times\bm u))  )$, see \cite{MR2290574,2019-weifeng,Xu} and references therein for detailed information.
However, when the quad-curl term $\nabla\times( \nabla\times( \nabla\times(\nabla\times\bm u))  )$ is present, the design and analysis of numerical methods for the MHD model becomes more difficult and challenging.  Therefore, it is worth devising accurate and efficient numerical methods for the quad-curl problem, providing substantial tools for the solution of the resistive MHD system and the fourth order inverse electromagnetic scattering problem.

It is known that there is a strong correlation between the regularity of exact solutions and the extent of smoothness on the computational domain on which the quad-curl problem is imposed. At the continuous level of differential equations, the author proved in \cite{MR3760167} that: when the domain has no point and edge singularities, it holds that $\bm u\in [H^4(\Omega)]^3$; when the domain has a point or edge singularities, $\bm u$ does {\em not} belong to $[H^3(\Omega)]^3$ in general. In \cite{MR3808156}, the author proved that on convex polyhedral domains, if  $\nabla\cdot\bm f=0$, there hold
\begin{align*}
\bm u\in[H^2(\Omega)]^3,\qquad \nabla\times\bm u\in [H^2(\Omega)]^3,\qquad p=0.
\end{align*}
These results imply that a reasonable assumption on the regularity of the exact solution of the quad-curl problem, from which the stability and convergence results of numerics are derived,  is highly desirable for designing practical numerical methods. This is indeed one of our motivations for this work.

There are already many works devoted to the numerical study on the quad-curl problem in the past decades. In \cite{MR2813342}, a nonconforming finite element method was studied under the regularity assumption
\begin{align*}
\bm u \in [H^4(\Omega)]^3.
\end{align*}
A discontinuous Galerkin (DG) method using $\bm H(\text{curl})$ conforming elements for the quad-\text{curl} model problem was investigated in \cite{MR3041683},
where the following regularity requirements were assumed:
\begin{align}\label{post-minimal}
\bm u \in [H^2(\Omega)]^3,\qquad \nabla\times\bm u\in [H^2(\Omega)]^3.
\end{align}
{\color{black} An interior penalty DG} finite element method for the quad-\text{curl} eigenvalue problem was introduced and analyzed in \cite{MR3439219} given that the following set of regularities
\begin{align*}
\bm u \in [H^3(\Omega)]^3,\qquad\nabla\times\bm u\in [H^3(\Omega)]^3
\end{align*}
holds. Instead of solving the quad-curl problem directly, through introducing extra unknowns, a mixed finite element method was proposed and analysed in \cite[method in (44)]{MR3745016} based on a Helmholtz decomposition.
	The author proved the well-posedness and stability of the method.  Also, the convergence rate in the energy-norm is also proved on the convex domain.  A finite element method for the quad-\text{curl} problem in two dimensions was studied in \cite{MR3719596} based on the Hodge decomposition.
Concerning conforming finite element methods, since the curl-curl conforming elements in three dimensions are still unknown (see \cite{MR3949709} for curl-curl conforming elements in two dimensions),  it would be complicated and far from being obvious (since the conforming elements for the biharmonic problem are quite complicated even in two dimensions, see \cite{MR543934} for example) if the curl-curl conforming elements {\color{black} in three dimensions } are considered. {
\color{black} We refer to \cite{MR3894188} for more a mixed method for the quad-curl problem and to \cite{MR3935886,MR3864573} for the multigrid methods on the quad-curl problem.
}

\par
In this paper,  we firstly \color{black} present several regularity results of the quad-curl problem on general Lipschitz domains (might be non-convex), which have not  been documented in literature yet. Then,
we introduce {\color{black} an interior penalty DG}  finite element method solving the quad-curl  model problem \eqref{PDE:orignial}. {\color{black} Even though our numerical scheme shares some features with that proposed in \cite{MR3041683},  the authors of  \cite{MR3041683} dealt with a quad-curl problem with a reaction term which makes their theoretical analysis different from ours.}  Finally, we prove the corresponding stability and convergence results of the numerical solution of $\bm u$ under a relatively low regularity compared to that in existing works, i.e.
\begin{align*}
\bm u\in[H^{r_{u_0}}(\Omega)]^3,\quad
\nabla\times\bm u\in [H^{r_{u_1}}(\Omega)]^3,\quad
\nabla\times(\nabla\times\bm u)\in[H^{r_{u_2}}(\Omega)]^3,\quad
p\in H^{ r_p}(\Omega),
\end{align*}
where $r_{u_0}>\frac{1}{2}$, $r_{u_1}\ge 1$, $r_{u_2}>\frac{1}{2}$, and $r_p>\frac{3}{2}$.    In addition, we establish a {\em novel} discrete Sobolev imbedding inequality in the following piecewise $H^1$ norm (see \Cref{discrete-H1}):
\begin{align*}
\sum_{K\in\mathcal{T}_h}\|\bm v_h\|_{1,K}^2\le
C\left[
\sum_{K\in\mathcal{T}_h}\left(
\|\nabla\times\bm v_h\|^2_{0,K}
+\|\nabla\cdot\bm v_h\|^2_{0,K}
\right)
+\sum_{F\in\mathcal{E}_h}h_F^{-1}\|[\![\bm v_h]\!]\|^2_{0,F}
\right],
\end{align*}
where $\bm v_h$ is a piecewise polynomial of a fixed order. This inequality provides us with a fundamental tool to further achieve the discrete $H^1$ stability and $H^1$ error estimate for the approximation of $\nabla\times\bm u$. Moreover, turning to the inequality, we could apply our mixed method to solve eigenvalue problems and carry out the numerical analysis of nonlinear problems and high order problems in a low regularity region via a posteriori analysis techniques (\cite{MR2684360,MR2846773}), and we will consider it in the future works.
\par
The rest of this paper is organized as follows.
We present some regularity results for the partial differential equations (PDEs) with the quad-curl term in \Cref{sec:reg}.
A mixed method for the quad-curl problem is introduced in \Cref{sec:mixed}.
We obtain a novel discrete Sobolev inequality
and stability results for the underlying mixed method in \Cref{sec:stability}.
The convergence result is proved through an energy argument in \Cref{sec:error}.
We give estimates in $\bm H({\rm curl})$ norm through dual arguments in \Cref{sec:dual}.

\section{Regularity for PDEs\label{sec:reg}}

For any bounded domain $\Lambda \subset  \mathbb{R}^s$ $(s=2,3)$, and any two functions $u,v\in L^2(\Lambda)$, we denote the $L^2(\Lambda)$ inner product and its norm by
\begin{align*}
(u,v)_{\Lambda}:=\int_{\Lambda}uv\;{ \rm d}x,\qquad
\|u\|_{0,\Lambda}:=(u,u)_{\Lambda}^{\frac{1}{2}},
\end{align*}
and when $\Lambda=\Omega$, we set
\begin{align*}
(u,v):=(u,v)_{\Omega},\qquad
\|u\|_{0}:=\|u\|_{0,\Omega},
\end{align*}
for simplicity.
We denote the Sobolev spaces defined on $\Lambda$ by $W^{m,p}(\Lambda)$ and $W_0^{m,p}(\Lambda)$, and
denote  its semi-norm and norm by $|v|_{m,p,\Lambda}$, $\|v\|_{m,p,\Lambda}$, respectively. When $p=2$ we omit $p$ in $|v|_{m,p,\Lambda}$ and $\|v\|_{m,p,\Lambda}$; when $\Lambda=\Omega$, we omit $\Lambda$ in $|v|_{m,p,\Lambda}$ and $\|v\|_{m,p,\Lambda}$.
For conventional notations, we denote
\begin{align*}
&H^m(\Lambda):= W^{m,2}(\Lambda),\qquad
H^m_0(\Lambda):= W_0^{m,2}(\Lambda).
\end{align*}
In particular, when $\Lambda\in \mathbb{R}^{2}$, we use $\langle\cdot,\cdot\rangle_{\Lambda}$ to replace $(\cdot,\cdot)_{\Lambda}$ for distinction.
The bold face fonts will be used for vector (or tensor) analogues of the Sobolev spaces along with vector-valued (or tensor-valued) functions.
We define the following function spaces \color{black}
\begin{align*}
\bm{H}(\text{div} ;\Omega)&:=\{\bm{v}\in [L^2(\Omega)]^3: \nabla\cdot\bm{v}\in L^2(\Omega) \},\\
\bm{H}(\text{curl};\Omega)&:=\{\bm{v}\in [L^2(\Omega)]^3: \nabla\times\bm{v}\in [L^2(\Omega)]^3 \},\\
\bm{H}^s(\text{curl};\Omega)&:=\{\bm{v}\in [H^s(\Omega)]^3: \nabla\times\bm{v}\in [H^s(\Omega)]^3 \text{ with }s\ge 0\},\\
\bm H({\rm curl}^2;\Omega)
&:=\{
\bm v\in [L^2(\Omega)]^3:\nabla\times\bm v\in \bm{H}(\text{curl};\Omega)
\}
\end{align*}
equipped  with the graph norms
\begin{align*}
&\|\bm v\|_{\bm H({\rm div};\Omega)}:=\left(
\|\bm v\|^2_0+\|\nabla\cdot\bm v\|^2_0\right)^{\frac{1}{2}},\qquad\quad
\|\bm v\|_{\bm H({\rm curl};\Omega)}:=\left(
\|\bm v\|^2_0+\|\nabla\times\bm v\|^2_0\right)^{\frac{1}{2}},\\
&\|\bm v\|_{\bm H^s({\rm curl};\Omega)}:=\left(
\|\bm v\|^2_s+\|\nabla\times\bm v\|^2_s\right)^{\frac{1}{2}},\qquad
\|\bm v\|_{\bm H({\rm curl}^2;\Omega)}:=\left(
\|\bm v\|^2_0+\|\nabla\times\bm v\|_{\bm H({\rm curl};\Omega)}^2
\right)^{\frac{1}{2}},
\end{align*}
respectively. Furthermore, we define
\begin{align*}
\bm H_0({\rm div};\Omega)&:=\{
\bm v\in\bm H({\rm curl};\Omega):\bm n\cdot\bm v|_{\partial\Omega}= 0
\},\\
\bm H_0({\rm curl};\Omega)&:=\{
\bm v\in\bm H({\rm curl};\Omega):\bm n\times\bm v|_{\partial\Omega}=\bm 0
\},\\
\bm H_0^s({\rm curl};\Omega)&:=\{
\bm v\in\bm H^s({\rm curl};\Omega):\bm n\times\bm v|_{\partial\Omega}=\bm 0
\},\\
\bm H_0({\rm curl}^2;\Omega)&:=\{
\bm v\in\bm H({\rm curl}^2;\Omega):\bm n\times\bm v|_{\partial\Omega}
=\bm n\times(\nabla\times\bm v)|_{\partial\Omega}
=\bm 0
\}.
\end{align*}

 Throughout this paper, we use $C$ to denote a positive constant independent of mesh size, not necessarily the same at its each occurrence.
The following imbedding theory is standard but useful in the analysis of $\bm H(\text{curl})$ space.
\begin{lemma} [{\color{black} \cite[Proposition 3.7]{MR1626990}}]\label{embed}
	If the domain $\Omega$ is a Lipschitz polyhedron, then $\bm{X}_T(\Omega)$ and $\bm{X}_N(\Omega)$ are continuously imbedded in $[H^{\alpha}(\Omega)]^3$ for a real number $\alpha\in (\frac{1}{2},1]$, where the spaces $ \bm X_N(\Omega)$ and $ \bm X_T(\Omega)$ are defined as
	\begin{align*}
	\bm{X} _N:=\bm{H}_0({\rm curl};\Omega)\cap \bm{H}({\rm div} ;\Omega),\qquad
	\bm{X} _T:=\bm{H}({\rm curl};\Omega)\cap \bm{H}_0({\rm div} ;\Omega).
	\end{align*}
	
\end{lemma}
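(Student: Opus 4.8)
The plan is to reduce the vector estimate to scalar Laplace regularity on the polyhedron by means of a Helmholtz-type decomposition adapted to the boundary condition, treating $\bm X_N$ and $\bm X_T$ by parallel arguments. I will describe the case $\bm X_N$ in detail; the case $\bm X_T$ is entirely analogous, with the Dirichlet data below replaced by Neumann data and the condition $\bm n\times\bm v=\bm 0$ replaced by $\bm n\cdot\bm v=0$. Given $\bm v\in\bm X_N$, so that $\nabla\cdot\bm v\in L^2(\Omega)$, $\nabla\times\bm v\in[L^2(\Omega)]^3$ and $\bm n\times\bm v|_{\partial\Omega}=\bm 0$, I would let $\phi\in H_0^1(\Omega)$ solve the Dirichlet problem $-\Delta\phi=-\nabla\cdot\bm v$ in $\Omega$, $\phi=0$ on $\partial\Omega$. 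Then $\bm w:=\bm v-\nabla\phi$ is divergence free; since $\phi$ vanishes on $\partial\Omega$ its tangential gradient vanishes there, so $\bm n\times\nabla\phi=\bm 0$ and hence $\bm n\times\bm w=\bm 0$ as well, while $\nabla\times\bm w=\nabla\times\bm v\in[L^2(\Omega)]^3$. Thus $\bm v=\nabla\phi+\bm w$ splits the field into a pure gradient with an $L^2$ divergence source and a divergence-free field with $L^2$ curl, both carrying the tangential boundary condition.

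Next I would invoke the scalar elliptic regularity available on a Lipschitz polyhedron: because $\nabla\cdot\bm v\in L^2(\Omega)$, the Dirichlet solution satisfies $\phi\in H^{1+\alpha}(\Omega)$ for some $\alpha\in(\tfrac12,1]$ determined by the interior dihedral and solid angles of $\partial\Omega$, whence $\nabla\phi\in[H^\alpha(\Omega)]^3$ with the corresponding bound. For the remainder $\bm w$, which is divergence free with $L^2$ curl and vanishing tangential trace, I would recover the same regularity by reducing to scalar problems once more: since $\Omega$ is simply connected with connected boundary, the relevant space of harmonic (Dirichlet/Neumann) fields is trivial, so $\bm w$ admits a divergence-free vector potential $\bm w=\nabla\times\bm\psi$, and the conditions $\nabla\cdot\bm w=0$, $\bm n\times\bm w=\bm 0$ let one again apply the $H^{1+\alpha}$ shift for $L^2$ data. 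Equivalently, one may use the identity $-\Delta\bm w=\nabla\times(\nabla\times\bm w)\in H^{-1}(\Omega)$ together with the two boundary conditions and the corresponding fractional elliptic estimate. Adding the two contributions yields $\bm v\in[H^\alpha(\Omega)]^3$ with
\[
\norm{\bm v}_{\alpha}\le C\left(\norm{\bm v}_{0}+\norm{\nabla\times\bm v}_{0}+\norm{\nabla\cdot\bm v}_{0}\right),
\]
which is exactly the asserted continuous embedding, and the right-hand side is controlled by the graph norms defining $\bm X_N$.

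The main obstacle is the scalar regularity input itself: on a general, possibly non-convex, Lipschitz polyhedron one cannot expect full $H^2$, and it is precisely the fractional gain $\alpha>\tfrac12$ that is delicate. This rests on Jerison--Kenig and Dauge type estimates for the Dirichlet and Neumann Laplacian, together with a careful analysis of the edge and corner singular exponents to guarantee that $\alpha$ stays \emph{strictly} above $\tfrac12$; a naive integration-by-parts identity of the form $\norm{\nabla\bm v}_0^2=\norm{\nabla\times\bm v}_0^2+\norm{\nabla\cdot\bm v}_0^2+(\text{boundary curvature terms})$ only succeeds on smooth or convex domains, because on a polyhedron the boundary terms are not controllable across edges and vertices. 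A secondary technical point is tracking the boundary conditions and the topology through the decomposition so that the divergence-free remainder genuinely reduces to scalar problems; the standing hypotheses that $\Omega$ is simply connected with connected boundary are what remove the finite-dimensional cohomological obstructions and keep this reduction clean.
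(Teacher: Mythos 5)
The paper does not actually prove this lemma; it is quoted verbatim from \cite[Proposition 3.7]{MR1626990}, so there is no internal proof to compare against. Judged on its own merits, your sketch starts along the right lines --- the splitting $\bm v=\nabla\phi+\bm w$ with $\phi$ solving a Dirichlet (resp.\ Neumann) problem, and the $H^{1+\alpha}$, $\alpha>\frac12$, shift for the scalar Laplacian with $L^2$ data on a polyhedron, are indeed the ingredients that Amrouche--Bernardi--Dauge--Girault use. The gradient part of your argument is sound.

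The genuine gap is in your treatment of the divergence-free remainder $\bm w$, which is where all the difficulty of the proposition is concentrated. After the splitting, $\bm w$ is exactly a field with $\nabla\cdot\bm w=0$, $\nabla\times\bm w\in[L^2(\Omega)]^3$ and $\bm n\times\bm w|_{\partial\Omega}=\bm 0$, i.e.\ a general divergence-free element of $\bm X_N$ --- so asserting that it lies in $[H^{\alpha}(\Omega)]^3$ is just the statement being proved, restricted to the divergence-free subspace. Neither of your two routes closes this. Route (a) produces a vector potential $\bm\psi$ with $\bm w=\nabla\times\bm\psi$, but you then need $\bm\psi\in[H^{1+\alpha}(\Omega)]^3$, and the boundary conditions inherited by $\bm\psi$ ($\bm n\cdot\bm\psi=0$ together with $\bm n\times(\nabla\times\bm\psi)=\bm 0$) do not decouple into scalar Dirichlet or Neumann problems; establishing the regularity of the potential is precisely the content of Theorems 3.5--3.6 of \cite{MR1626990}, which your sketch presupposes. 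Route (b) is circular in a sharper way: the identity $-\Delta\bm w=\nabla\times(\nabla\times\bm w)$ is only useful if $\bm w$ carries full componentwise Dirichlet (or Neumann) data, but here only the tangential trace of $\bm w$ vanishes --- the normal trace is free --- so this is the Hodge/Maxwell boundary value problem, not a componentwise scalar one, and ``the corresponding fractional elliptic estimate'' for it is again the proposition itself. (Contrast this with the proof of Theorem 2.3 in the paper, where the auxiliary field $\bm\sigma=\nabla\times\bm u$ satisfies \emph{both} $\bm n\times\bm\sigma=\bm 0$ and $\bm n\cdot\bm\sigma=0$, hence genuinely vanishes on $\partial\Omega$, and the componentwise Laplacian trick is legitimate.) To repair the argument one needs either the $H^1$-regular vector potential of \cite{MR1626990} or a Birman--Solomyak-type regular decomposition $\bm v=\bm v_{\rm reg}+\nabla\phi$ with $\bm v_{\rm reg}\in[H^1(\Omega)]^3$, after which only the scalar problem remains; both require machinery beyond scalar corner regularity. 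A minor further point: the lemma is stated for a general Lipschitz polyhedron, whereas your vector-potential step invokes simple connectedness and connected boundary; the cited proposition does not need these hypotheses (the harmonic fields form a finite-dimensional smooth space), so the topological reduction should be flagged as a simplification rather than part of the general proof.
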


%
\color{black}
\begin{definition}
	We define the weak formulation  of \eqref{PDE:orignial} as:
	\par
	Find $(\bm u,p)\in \bm H_0({\rm curl}^2;\Omega)\times H_0^1(\Omega)$ such that
	\begin{subequations}\label{var}
		\begin{align}
		(\nabla\times(\nabla\times\bm u),\nabla\times(\nabla\times\bm v))+(\nabla p,\bm v)&=(\bm f,\bm v),\label{var1}\\
		(\bm u,\nabla q)&=0\label{var2}
		\end{align}
		for all $(\bm v,q)\in \bm H_0({\rm curl}^2;\Omega)\times H_0^1(\Omega)$.
	\end{subequations}
\end{definition}

%
%

\begin{theorem}\label{dense} The space $[\mathcal C_0^{\infty}(\Omega)]^3$ is dense in the space $\bm H_0({\rm curl}^2;\Omega)$.
\end{theorem}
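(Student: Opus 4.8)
The plan is to establish the density in two stages: first for a domain that is star-shaped with respect to a point, where a scaling-plus-mollification argument works cleanly and reveals exactly why both boundary conditions are needed; and then for a general Lipschitz polyhedron by localization. For the star-shaped case, assume $\Omega$ is star-shaped with respect to the origin, so that $\theta\Omega:=\{\theta\bm x:\bm x\in\Omega\}\subset\subset\Omega$ for every $\theta\in(0,1)$. Given $\bm u\in\bm H_0({\rm curl}^2;\Omega)$, I would set $\bm u_\theta(\bm x):=\bm u(\bm x/\theta)$ on $\theta\Omega$ and extend it by $\bm 0$ to $\Omega$. A direct computation gives $\nabla\times\bm u_\theta=\theta^{-1}(\nabla\times\bm u)(\cdot/\theta)$ and $\nabla\times(\nabla\times\bm u_\theta)=\theta^{-2}(\nabla\times(\nabla\times\bm u))(\cdot/\theta)$ on $\theta\Omega$. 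The crucial point is that the zero-extension of $\bm u_\theta$ across $\partial(\theta\Omega)$ stays in $\bm H({\rm curl}^2;\Omega)$: by the standard gluing criterion, a piecewise $\bm H({\rm curl})$ field is globally $\bm H({\rm curl})$ exactly when its tangential traces match across the interface. Since $\bm n\times\bm u=\bm 0$ on $\partial\Omega$ forces $\bm n\times\bm u_\theta=\bm 0$ on $\partial(\theta\Omega)$, the zero-extension of $\bm u_\theta$ lies in $\bm H({\rm curl})$; since in addition $\bm n\times(\nabla\times\bm u)=\bm 0$ on $\partial\Omega$ forces $\bm n\times(\nabla\times\bm u_\theta)=\bm 0$, the zero-extension of $\nabla\times\bm u_\theta$ also lies in $\bm H({\rm curl})$, i.e. $\bm u_\theta\in\bm H({\rm curl}^2;\Omega)$. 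Thus the two homogeneous conditions defining $\bm H_0({\rm curl}^2;\Omega)$ are precisely what makes the extension admissible.

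Continuity of dilation on $L^2$ then gives $\bm u_\theta\to\bm u$, $\nabla\times\bm u_\theta\to\nabla\times\bm u$, and $\nabla\times(\nabla\times\bm u_\theta)\to\nabla\times(\nabla\times\bm u)$ in $[L^2(\Omega)]^3$ as $\theta\to1^-$. Because $\bm u_\theta$ is supported in $\theta\bar\Omega\subset\subset\Omega$, mollification $\rho_\varepsilon*\bm u_\theta$ with $\varepsilon<\mathrm{dist}(\theta\bar\Omega,\partial\Omega)$ produces functions in $[\mathcal C_0^\infty(\Omega)]^3$, and since mollification commutes with $\nabla\times$ we obtain $\rho_\varepsilon*\bm u_\theta\to\bm u_\theta$ in $\bm H({\rm curl}^2;\Omega)$ as $\varepsilon\to0$. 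A diagonal argument in $\theta\to1^-$ and $\varepsilon\to0$ then yields the claim for star-shaped $\Omega$.

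For a general Lipschitz polyhedron I would cover $\bar\Omega$ by finitely many open sets, one compactly contained in $\Omega$ and the rest centered at boundary points where, in suitable coordinates, $\Omega$ is the hypograph of a Lipschitz function and hence star-shaped with respect to a ball, and take a subordinate partition of unity $\{\phi_j\}$. \textbf{The main obstacle} is that $\phi_j\bm u$ need not belong to $\bm H({\rm curl}^2;\Omega)$: expanding $\nabla\times(\nabla\times(\phi_j\bm u))$ by the product rule produces terms such as $(\nabla\phi_j\cdot\nabla)\bm u$ and $\nabla\phi_j\,(\nabla\cdot\bm u)$, which involve first derivatives of $\bm u$ that are \emph{not} controlled by the $\bm H({\rm curl}^2)$-norm. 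To circumvent this I would first apply a regular (Helmholtz-type) decomposition on the simply connected Lipschitz domain, writing $\bm u=\bm z+\nabla\phi$ with $\bm z\in[H^1_0(\Omega)]^3$ and $\phi\in H^1_0(\Omega)$, so that $\nabla\times\bm u=\nabla\times\bm z$. The gradient part is handled directly: approximating $\phi$ in $H^1_0(\Omega)$ by $\phi_n\in\mathcal C_0^\infty(\Omega)$ gives $\nabla\phi_n\in[\mathcal C_0^\infty(\Omega)]^3$ with $\nabla\phi_n\to\nabla\phi$ in $\bm H({\rm curl}^2;\Omega)$, since every curl of a gradient vanishes.

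For the regular part $\bm z$, the extra $H^1$-regularity renders the product-rule terms square-integrable, so $\phi_j\bm z\in\bm H_0({\rm curl}^2;\Omega)$; the two homogeneous traces are inherited because $\bm z$ vanishes on $\partial\Omega$ (giving $\bm n\times(\phi_j\bm z)=\bm 0$ and killing the $\bm z\times\nabla\phi_j$ contribution to the tangential trace of $\nabla\times(\phi_j\bm z)$) and because $\bm n\times(\nabla\times\bm z)=\bm n\times(\nabla\times\bm u)=\bm 0$. Each localized piece is then approximated by the Stage-one argument on its star-shaped patch, and summing the approximations of $\nabla\phi$ and of the pieces $\phi_j\bm z$ completes the proof. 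The technical heart of the argument is therefore the pairing of the regular decomposition (to upgrade the non-gradient part to $H^1$, thereby legitimizing the partition of unity) with the scaling-and-mollification construction, the dilation computation itself being routine.
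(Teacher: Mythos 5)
Your proposal is correct, but it takes a genuinely different --- and in one respect more careful --- route than the paper. The paper's proof is very short: it zero-extends $\bm v$ to $\mathbb R^3$, uses the two homogeneous tangential traces to conclude that both $\widetilde{\bm v}$ and $\nabla\times\widetilde{\bm v}$ belong to $\bm H({\rm curl};\mathbb R^3)$ with curls equal to the extended curls, and then delegates the actual approximation entirely to ``the proof of part (ii) of \cite[Theorem 3.29]{MR1742312}'', i.e.\ the standard localize--translate--mollify scheme. You instead build the approximation yourself (dilation plus mollification on star-shaped pieces), and, crucially, you confront head-on the reason why that cited scheme does not transplant verbatim to $\bm H({\rm curl}^2;\Omega)$: multiplication by a smooth cutoff is bounded on the Sobolev spaces McLean treats, but not on this second-order graph space, since $\nabla\times(\nabla\times(\phi_j\bm v))$ contains the commutator $\nabla\times(\nabla\phi_j\times\bm v)$, which involves first derivatives of $\bm v$ that the $\bm H({\rm curl}^2;\Omega)$-norm does not control. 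Your fix --- the regular decomposition $\bm u=\bm z+\nabla\phi$ with $\bm z\in[H^1_0(\Omega)]^3$ and $\phi\in H^1_0(\Omega)$, treating the gradient part by density of $\mathcal C_0^{\infty}(\Omega)$ in $H^1_0(\Omega)$ and the $H^1$ part by partition of unity --- is sound, and the trace computations you sketch for $\phi_j\bm z$ (both tangential traces vanish because $\bm z|_{\partial\Omega}=\bm 0$ and $\bm n\times(\nabla\times\bm z)=\bm n\times(\nabla\times\bm u)=\bm 0$) are the right ones. What the paper's route buys is brevity; what yours buys is a self-contained argument whose only external ingredients are the regular decomposition of $\bm H_0({\rm curl};\Omega)$ with an $[H^1_0(\Omega)]^3$ vector part (a known result for bounded Lipschitz domains) and the fact that a Lipschitz boundary is locally star-shaped with respect to a ball --- two points you should cite or verify explicitly if this were written out in full.
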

\begin{proof}  For any $\bm v\in  \bm H_0({\rm curl}^2;\Omega)$, we define
	$$ \tilde{\bm v}=\left\{
	\begin{aligned}
	\bm v(\bm x) & \text{ if }\bm x\in \Omega,\\
	\bm 0\quad        &  \text{ if }\bm x\in \mathbb R^3/\Omega.
	\end{aligned}
	\right.
	$$
	Since $[\mathcal C_0^{\infty}(\Omega)]^3$ is dense in $\bm H_0({\rm curl};\Omega)$, we have $\tilde{\bm v}\in \bm H({\rm curl};\mathbb R^3)$, thus $\nabla\times\tilde{\bm v}\in \bm L^2(\mathbb R^3)$ and
	$$ \nabla\times\tilde{\bm v}=\left\{
	\begin{aligned}
	\nabla\times\bm v(\bm x) &  \text{ if }\bm x\in \Omega,\\
	\bm 0\quad        &  \text{ if }\bm x\in \mathbb R^3/\Omega.
	\end{aligned}
	\right.
	$$
	Again, due to the fact that $[\mathcal C_0^{\infty}(\Omega)]^3$ is dense in $\bm H_0({\rm curl};\Omega)$, we have $\nabla\times\widetilde{\bm v}\in \bm H({\rm curl};\mathbb R^3)$. Then we obtain that the space $[\mathcal C_0^{\infty}(\Omega)]^3$ is dense in space $\bm H_0({\rm curl}^2;\Omega)$ by following the proof of the part (ii) of \cite[Theorem 3.29]{MR1742312}.
\end{proof}

Looking at the test function as $\mathcal C_{0}^{\infty}(\Omega)$ in \eqref{var}, we know that the solution $(\bm u,p)\in \bm H_0({\rm curl}^2;\Omega)\times H_0^1(\Omega)$ of \eqref{var} satisfies \eqref{PDE:orignial} in the distribution sense.
On the other hand, the solution $(\bm u,p)$ of \eqref{PDE:orignial}
with the low regularity
$\bm u\in [L^2(\Omega)]^3$,
$\nabla\times(\nabla\times\bm u)\in [L^2(\Omega)]^3$ and $p\in H_0^1(\Omega)$
 satisfies \eqref{var}. Now we are ready to prove the following regularity on  Lipschitz polyhedron domains for the weak solution $(\bm u,p)\in \bm H_0({\rm curl}^2;\Omega)\times H_0^1(\Omega)$.

\begin{theorem} \label{th:reg}
	Let $\Omega$ be a simply connected Lipschitz domain in $\mathbb R^3$, for any given $\bm f\in [L^2(\Omega)]^3$, the problem
	\eqref{PDE:orignial} has a unique \color{black} weak solution $(\bm u,p)\in{\bm H}_0({\rm curl}^2;\Omega)\times H_0^1(\Omega)$. In addition, the following regularity estimate holds true:
	\begin{align*}
	\|\bm u\|_{r_{u_0}}	
		+\|\nabla\times\bm u\|_1
			+\|\nabla\times(\nabla\times\bm u)\|_0
	+\|\nabla\times(\nabla\times(\nabla\times(\nabla\times\bm u)))\|_{0}	
	+\|\nabla p\|_0
	\le C\|\bm f\|_0,
	\end{align*}
	where the regularity index
	$r_{u_0}\in (\frac{1}{2},1]$.
	Furthermore, if $\nabla\times(\nabla\times\bm u)\in  [H^{\color{black}r_{u_1}-1}(\Omega)]^3$, we have the following regularity
	\begin{align*}
	\|            \nabla\times\bm u\|_{r_{u_1}}
	\le C\|\nabla\times(\nabla\times\bm u)\|_{r_{u_1}-1},
	\end{align*}
	where $r_{u_1}\in [1,\frac{3}{2})$, and it is close to $\frac{3}{2}$.
\end{theorem}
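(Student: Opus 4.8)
The plan is to read \eqref{var} as a saddle-point problem and combine the abstract Brezzi theory with the imbedding of \Cref{embed}. Writing $a(\bm u,\bm v)=(\nabla\times(\nabla\times\bm u),\nabla\times(\nabla\times\bm v))$ and $b(\bm v,q)=(\bm v,\nabla q)$, the kernel of $b$ is $\bm V=\{\bm v\in\bm H_0({\rm curl}^2;\Omega):\nabla\cdot\bm v=0\}$. The inf-sup condition for $b$ is immediate: for $q\in H_0^1(\Omega)$ the field $\nabla q$ lies in $\bm H_0({\rm curl}^2;\Omega)$ (its tangential trace vanishes because $q|_{\partial\Omega}=0$, and its curls vanish identically), and testing with $\bm v=\nabla q$ gives $b(\nabla q,q)=\|\nabla q\|_0^2=\|\nabla q\|_{\bm H({\rm curl}^2;\Omega)}\|\nabla q\|_0$, which together with the Poincar\'e inequality yields the bound. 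For coercivity of $a$ on $\bm V$ I would iterate the Friedrichs inequality furnished by \Cref{embed} (here the simple connectedness of $\Omega$ is used): any $\bm v\in\bm V$ belongs to $\bm X_N$, so $\|\bm v\|_0\le C\|\nabla\times\bm v\|_0$; moreover $\nabla\times\bm v$ is divergence-free and satisfies $\bm n\times(\nabla\times\bm v)|_{\partial\Omega}=\bm 0$, hence also lies in $\bm X_N$, giving $\|\nabla\times\bm v\|_0\le C\|\nabla\times(\nabla\times\bm v)\|_0$. Combining these shows $a(\bm v,\bm v)=\|\nabla\times(\nabla\times\bm v)\|_0^2\ge c\,\|\bm v\|_{\bm H({\rm curl}^2;\Omega)}^2$, so the Brezzi theory delivers a unique $(\bm u,p)$ with $\|\bm u\|_{\bm H({\rm curl}^2;\Omega)}+\|\nabla p\|_0\le C\|\bm f\|_0$.

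Several of the listed bounds then follow at once. Testing \eqref{var1} with $\bm v=\nabla\phi$, $\phi\in H_0^1(\Omega)$, kills the first term and shows that $p$ solves the weak Poisson problem $(\nabla p,\nabla\phi)=(\bm f,\nabla\phi)$, whence $\|\nabla p\|_0\le\|\bm f\|_0$; from the distributional identity $\nabla\times(\nabla\times(\nabla\times(\nabla\times\bm u)))=\bm f-\nabla p$ I read off $\|\nabla\times(\nabla\times(\nabla\times(\nabla\times\bm u)))\|_0\le C\|\bm f\|_0$, while $\|\nabla\times(\nabla\times\bm u)\|_0\le\|\bm u\|_{\bm H({\rm curl}^2;\Omega)}\le C\|\bm f\|_0$. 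Since $\bm u\in\bm X_N$, \Cref{embed} gives $\bm u\in[H^{r_{u_0}}(\Omega)]^3$ with $\|\bm u\|_{r_{u_0}}\le C\|\bm u\|_{\bm X_N}\le C\|\bm f\|_0$ for some $r_{u_0}\in(\tfrac12,1]$.

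The genuine work is the regularity of $\bm w:=\nabla\times\bm u$, and this is where I expect the main obstacle. Choosing $r_{u_1}=1$ in the second assertion recovers $\|\nabla\times\bm u\|_1\le C\|\nabla\times(\nabla\times\bm u)\|_0\le C\|\bm f\|_0$, so it suffices to prove the shift statement. The field $\bm w$ is divergence-free, satisfies $\bm n\times\bm w|_{\partial\Omega}=\bm 0$, and has $\nabla\times\bm w=\nabla\times(\nabla\times\bm u)\in[H^{r_{u_1}-1}(\Omega)]^3$ with $r_{u_1}-1\in[0,\tfrac12)$. The difficulty is that on a non-convex polyhedron a generic element of $\bm X_N$ is only $H^{1/2+\epsilon}$, so $\nabla\times\bm w\in L^2$ by itself cannot force $\bm w\in H^1$; the gain must exploit the special structure of $\bm w$.

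I would use that $\bm w=\nabla\times\bm u$ together with $\bm n\times\bm u=\bm 0$ makes $\bm w$ orthogonal in $[L^2(\Omega)]^3$ to $\nabla H^1(\Omega)$, so that in the regular--singular (Costabel--Dauge-type) decomposition of $\bm X_N$ the singular gradient component of $\bm w$ is absent. On the remaining regular part the identity $-\Delta\bm w=\nabla\times(\nabla\times\bm w)$, valid because $\nabla\cdot\bm w=0$, combined with a shift estimate for the div--curl system on Lipschitz polyhedra, upgrades $\nabla\times\bm w\in H^{r_{u_1}-1}$ to $\bm w\in H^{r_{u_1}}$; the ceiling $r_{u_1}<\tfrac32$ is exactly the generic regularity threshold for such second-order elliptic systems at the reentrant edges and corners, so the restriction cannot be removed without a convexity hypothesis. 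Finally, the lower-order term $\|\bm w\|_0$ is absorbed into the right-hand side by a further application of the Friedrichs inequality, which is what lets the estimate be controlled by $\|\nabla\times(\nabla\times\bm u)\|_{r_{u_1}-1}$ alone. Making this decomposition and shift quantitative on the non-convex polyhedron is the crux of the argument.
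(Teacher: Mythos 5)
Your well-posedness argument (Brezzi theory with inf-sup via $\bm v=\nabla q$ and coercivity on the divergence-free kernel by iterating the Friedrichs inequality from \Cref{embed}), together with the bounds for $\|\nabla p\|_0$, $\|\nabla\times(\nabla\times(\nabla\times(\nabla\times\bm u)))\|_0$, $\|\nabla\times(\nabla\times\bm u)\|_0$ and $\|\bm u\|_{r_{u_0}}$, is correct and essentially equivalent to what the paper does (the paper phrases existence as a consequence of uniqueness for a linear square system, but the estimates are the same chain $\|\bm u\|_0\le C\|\nabla\times\bm u\|_0\le C\|\nabla\times(\nabla\times\bm u)\|_0$).

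The gap is in the step you yourself flag as the crux: the regularity of $\bm w=\nabla\times\bm u$. You correctly record that $\bm w$ is divergence-free, that $\bm n\times\bm w|_{\partial\Omega}=\bm 0$, and that $\bm w$ is $L^2$-orthogonal to $\nabla H^1(\Omega)$, but you then reach for a regular--singular (Costabel--Dauge-type) decomposition of $\bm X_N$ plus an unspecified ``shift estimate for the div--curl system,'' and you leave that machinery unexecuted. The missing observation that makes all of this unnecessary is that the orthogonality to gradients is exactly the statement $\bm n\cdot\bm w|_{\partial\Omega}=\nabla_{\partial\Omega}\cdot(\bm n\times\bm u)=0$; combined with $\bm n\times\bm w|_{\partial\Omega}=\bm 0$ from \eqref{o4}, this gives $\bm w=\bm 0$ on $\partial\Omega$ in full. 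Since $\nabla\cdot\bm w=0$, the identity $\Delta\bm w=\nabla(\nabla\cdot\bm w)-\nabla\times(\nabla\times\bm w)=-\nabla\times(\nabla\times\bm w)$ then exhibits $\bm w$ as the solution of three \emph{scalar} homogeneous Dirichlet problems for the Laplacian, so no Maxwell-specific decomposition is needed at all: the $H^1$ bound $\|\bm w\|_1\le C\|\Delta\bm w\|_{-1}\le C\|\nabla\times(\nabla\times\bm u)\|_0$ follows from standard Dirichlet regularity on Lipschitz domains (\cite{MR961439}), and the shift $\|\bm w\|_{r_{u_1}}\le C\|\Delta\bm w\|_{r_{u_1}-2}\le C\|\nabla\times(\nabla\times\bm u)\|_{r_{u_1}-1}$ for $r_{u_1}\in[1,\tfrac32)$ is exactly the Jerison--Kenig estimate \cite[Theorem 1.1]{MR1331981}, which also explains the $\tfrac32$ ceiling (it is the Lipschitz-domain threshold, not specifically a reentrant-edge phenomenon). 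As written, your route would still require you to prove that the ``regular part'' of the decomposition admits a shift up to $H^{3/2-\epsilon}$, which is precisely the content you have not supplied; without converting the orthogonality into the full Dirichlet condition, the argument is incomplete.
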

\begin{proof}\color{black}
	{ \color{black}
		The following  inf-sup condition is followed by \Cref{dense}:
		\begin{align*}
		\sup_{\bm 0\neq \bm v\in \bm H_0({\rm curl}^2;\Omega)}\frac{(\nabla p,\bm v)}{\|\bm v\|_{\bm H({\rm curl}^2;\Omega)}}\ge C\|\nabla p\|_0.
		\end{align*}
In addition, we have the following V-elliptic property
		\begin{align*}
		\|\bm u\|_0\le C\|\nabla\times\bm u\|_{0}\le C\|\nabla\times(\nabla\times\bm u)\|_0,
		\end{align*}
	for $\bm u\in \bm H_0({\rm curl}^2;\Omega)$, and $(\bm u,\nabla q)=0$ for all $q\in H_0^1(\Omega)$. The  existence of a unique solution in $\bm H_0({\rm curl}^2;\Omega)\times H_0^1(\Omega)$
	of \eqref{var} is followed immediately.
}
	First, we present our proof of the regularity estimate in following several steps:
	\begin{itemize}

		\item Proof of $\|\nabla\times(\nabla\times\bm u)\|_0\le C\|\bm f\|_0$: Taking $\bm v=\bm u$ in \eqref{var1} and $q=-p$ in \eqref{var2} and adding them together, we have
		\begin{align*}
		\|\nabla\times(\nabla\times\bm u)\|^2_0=(\bm f,\bm u).
		\end{align*}
	    Considering
		the facts that $(\bm u,\nabla q)=0$ for all $q\in H_0^1(\Omega)$,  $(\nabla\times\bm u,\nabla q)=0$ for all $q\in H_0^1(\Omega)$, and \Cref{embed} or \cite[Corollary 4.8]{MR2059447},
		we obtain
		\begin{align*}
		\|\nabla\times(\nabla\times\bm u)\|^2_0
		\le \|\bm f\|_0\|\bm u\|_0\le C \|\bm f\|_0 \|\nabla\times \bm u\|_0\le C \|\bm f\|_0\|\nabla\times(\nabla\times\bm u)\|_0,
		\end{align*}
		which leads to
		\begin{align*}
		\|\nabla\times(\nabla\times\bm u)\|_0\le C\|\bm f\|_0.
		\end{align*}

		\item Proof of $	\|\bm u\|_{r_{u_0}}\le C\|f\|_0$: It can be obtained directly from \Cref{embed}, and the fact that $	\|\nabla\times(\nabla\times\bm u)\|_0\le C\|\bm f\|_0$.

		\item Proof of  $\|\nabla\times\bm u\|_{1}
		\le C\|\bm f\|_0$: We define $\bm \sigma:=\nabla\times\bm u$. From $\bm n\times\bm u|_{\partial\Omega}=\bm 0$ we have,
		\begin{align*}
		\bm n\cdot\bm \sigma|_{\partial\Omega}=\bm n\cdot(\nabla\times\bm u)|_{\partial\Omega}=\nabla_{\partial\Omega}\cdot(\bm n\times\bm u)=0,
		\end{align*}
		which, together with $\bm n\times\bm{\sigma}|_{\partial\Omega}=\bm 0$ in terms of \eqref{o4}, leads to
		\begin{align}\label{ss1}
		\bm \sigma=\bm 0 \text{ on }\partial\Omega.
		\end{align}
		Meanwhile, $\nabla\cdot\bm\sigma=0$, gives
		\begin{align}\label{ss2}
		\Delta \bm{\sigma}=\nabla(\nabla\cdot\bm{\sigma} )-\nabla\times(\nabla\times\bm {\sigma})=-\nabla\times(\nabla\times\bm {\sigma})\text{ in }\Omega.
		\end{align}
		The combination of \eqref{ss1}, \eqref{ss2}, and the regularity for the elliptic problem in \cite{MR961439} yields
		\begin{align*}
		\|\nabla\times\bm u\|_{1}=\|\bm\sigma\|_{1}&\le C\|\nabla\times(\nabla\times\bm \sigma)\|_{-1}\le C\|\nabla\times(\nabla\times\bm u)\|_0\le C\|\bm f\|_0.
		\end{align*}

		\item Proof of  $\|\nabla\times\bm u\|_{r_{u_1}}
		\le C\|\nabla\times(\nabla\times(\nabla\times\bm u))\|_{r_{u_1}-2}$: The combination of \eqref{ss1}, \eqref{ss2} and  the regularity for the elliptic problem in \cite[Theorem 1.1]{MR1331981},
		for some constant $r_{u_1}\in [1,\frac{3}{2})$ and it is colse to $\frac{3}{2}$, yields
		\begin{align*}
		\|\nabla\times\bm u\|_{r_{u_1}}=\|\bm\sigma\|_{r_{u_1}}&\le C\|\nabla\times(\nabla\times\bm \sigma)\|_{r_{u_1}-2}\nonumber\\
		&=C\|\nabla\times(\nabla\times(\nabla\times\bm u))\|_{r_{u_1}-2}\nonumber\\
		&\le C\|\nabla\times(\nabla\times\bm u)\|_{r_{u_1}-1}.
		\end{align*}

		\item Proof of $\|\nabla p\|_0\le \|\bm f\|_0$: Letting  $\bm v=\nabla p$, it holds that $\nabla\times\bm v=\bm 0$ and $\bm v\in \bm H_0({\rm curl}^2;\Omega)$. We take $\bm v=\nabla p$ in \eqref{var1} to get
		\begin{align*}
		\|\nabla p\|^2_0=(\bm f,\nabla p) \le \|\bm f\|_0\|\nabla p\|_0.
		\end{align*}
		Then $\|\nabla p\|_0\le \|\bm f\|_0$ follows immediately.
		
		\item Proof of $	\|\nabla\times(\nabla\times(\nabla\times(\nabla\times\bm u)))\|_{0}\le C\|\bm f\|_0$: It follows from \eqref{var1} that
				$\nabla\times(\nabla\times(\nabla\times(\nabla\times\bm u)))=\bm f-\nabla p\in [L^2(\Omega)]^3$ in the distribution sense, therefore,
				\begin{align*}
				\|\nabla\times(\nabla\times(\nabla\times(\nabla\times\bm u)))\|_{0}
				=	\|\bm f-\nabla p\|_{0}\le 	\|\bm f\|_0+\|\nabla p\|_{0}\le 2\|\bm f\|_0.
				\end{align*}

	\end{itemize}
	
Finally, the uniqueness of the solution is  followed from the regularity estimate, and thus the existence of the solution follows immediately.

\end{proof}
\color{black}

\section{{\color{black} an interior penalty DG}  finite element method\label{sec:mixed}}

Let $\mathcal{T}_h$  be a quasi-uniform partition of the domain $\Omega$ consisting of tetrahedrons. For any element $K\in\mathcal{T}_h$, let $h_K$ be the infimum of the diameters of spheres containing $K$ and denote the mesh size $h:=\max_{K\in\mathcal{T}_h}h_K$. Let $\mathcal{E}_h$ be the set of all faces of the mesh $\mathcal{T}_h$
.
For any element $K\in\mathcal{T}_h$ and face $F\in\mathcal{E}_h$, we denote by $ \bm{n}_K $ and $\bm{n}_F$  the unit outward  normal vector to $\partial K$ and  face $F$, respectively.
Let $F=\partial K \cap \partial K{'}$ be an interior face shared by element $K$ and element
$K{'}$, and $\bm n_F$ be pointing from $K$  to $K{'}$. Let $\bm{\phi}$ be a piecewise smooth function. We
define the average and jump of $\bm{\phi}$ on $F$ as
\begin{align*}\color{black}
\{\!\!\{ \bm{\phi}\}\!\!\}:=\frac{1}{2}(\bm{\phi}|_K+\bm{\phi}|_{K'}),\qquad
[\![ \bm{\phi}]\!]:=\bm{\phi}|_K-\bm{\phi}|_{K'}.
\end{align*}
On a boundary face $F\subset\partial K\cap \partial\Omega$, we set $\{\!\!\{\bm{\phi}\}\!\!\}:=\bm\phi$, $[\![ \bm{\phi}]\!]:=\bm{\phi}$ and $\bm n_F=\bm n_K|_F$.
We denote by $\mathcal P_{\ell}(\Lambda)$ the set of all polynomials with order at most $\ell$ on bounded domains $\Lambda$, and denote by $\mathcal P_{\ell}(\mathcal T_h)$ the set of all piecewise polynomials with order at most $\ell$ with respect to the decomposition $\mathcal{T}_h$.
\subsection{{\color{black} Interior penalty DG}  methods}
For any integer $ k\ge 1$, we define the finite element spaces
\begin{align*}
\bm E_h:=\bm H_0(\text{curl};\Omega)\cap [\mathcal P_{k+1}(\mathcal{T}_h)]^3,\qquad Q_h:=H^1_0(\Omega)\cap \mathcal P_{k+2}(\mathcal{T}_h).
\end{align*}
Then our {\color{black} interior penalty DG}  method reads:
\par
Find $\bm u_h\in \bm E_h$ and $p_h\in Q_h$ such that
\begin{subequations}\label{FEM}
	\begin{align}
	\sum_{K\in\mathcal{T}_h}(\nabla\times(\nabla\times\bm u_h),\nabla\times(\nabla\times\bm v_h))_{K}+(\nabla p_h,\bm v_h)&\nonumber\\
	-\sum_{F\in\mathcal{E}_h}\langle\{\!\!\{ \nabla\times(\nabla\times\bm u_h)\}\!\!\},\bm n\times[\![\nabla\times\bm v_h]\!] \rangle_F&\nonumber\\
	\mp\sum_{F\in\mathcal{E}_h}\langle\{\!\!\{ \nabla\times(\nabla\times\bm v_h)\}\!\!\},\bm n\times[\![\nabla\times\bm u_h]\!] \rangle_F&\nonumber\\
	+\sum_{F\in\mathcal{E}_h}\color{black}\tau h_F^{-1}\color{black}\langle\bm n\times[\![\nabla\times\bm u_h]\!],\bm n\times[\![\nabla\times\bm v_h]\!] \rangle_F
	&=(\bm f,\bm v_h),\label{fem01}\\
	(\bm u_h,\nabla q_h)&=0\label{fem02}
	\end{align}
	hold for all $(\bm v_h,q_h)\in \bm E_h\times Q_h$.
	The stabilization parameter $\tau>0$ is independent of the mesh size.
\end{subequations}
In the following text, we consider the analysis only for the symmetry case (i.e., we replace `$\mp$' by `$-$' in \eqref{fem01}), since the proof of the non-symmetry case is similar to the symmetry case.

\subsection{Interpolations}
For integer $\ell\ge1$, we denote by $\bm{\Pi}_{h,\ell}^{\text{curl}}$ the standard  $\bm H({\rm curl})$-conforming interpolation of the second kind from
$\bm H^s(\text{curl};\Omega)$ to $\bm H(\text{curl};\Omega)\cap [\mathcal P_{\ell}(\mathcal{T}_h)]^3$ with $s>\frac{1}{2}$, and thus also from $\bm H^s({\rm curl};$ $\Omega)\cap\bm H_0(\text{curl};\Omega)$ to $\bm H_0(\text{curl};\Omega)\cap [\mathcal P_{\ell}(\mathcal{T}_h)]^3$ with $s>\frac{1}{2}$. The following approximation properties hold (\cite{MR864305,MR1609607,MR2059447})
\begin{subequations}
	\begin{align}\label{app-u1}
	\|\bm u-\bm{\Pi}_{h,\ell}^{\text{curl}}\bm u\|_{0,K}&\le Ch_K^t(\|\bm u\|_{t,K}+\color{black}h_K\color{black}\|\nabla\times\bm u\|_{t,K}),\\
	\|\nabla\times(\bm u-\bm{\Pi}_{h,\ell}^{\text{curl}}\bm u)\|_{0,K}&\le Ch_K^t\|\nabla\times\bm u\|_{t,K},\label{app-u2}
	\end{align}
	where  $\bm u\in \bm H^t({\rm curl};\Omega)$, \color{black} and real number \color{black} $t\in (\frac{1}{2},\ell]$, and
		\begin{align}\label{app-u4}
	\|\bm u-\bm{\Pi}_{h,\ell}^{\text{curl}}\bm u\|_{0,K}&\le Ch_K^m\|\bm u\|_{m,K},
	\end{align}
\end{subequations}
	where  $\bm u\in \bm H^m(\Omega)$, \color{black} and real number \color{black} $m\in (1,\ell+1]$. We define the $L^2$-projection from
$L^2(\Omega)$ onto $Q_h$ as: for any
$p\in L^2(\Omega)$, find $\Pi_h^Qp\in Q_h$ such that
\begin{align*}
(\Pi_h^Qp,q)=(p,q)\qquad\forall q\in Q_h.
\end{align*}
The following approximation result holds
\begin{align}\label{app-p}
|p-\Pi_h^Qp|_1\le Ch^{j-1}\|p\|_j
\end{align}
for \color{black} real number $j\in [1,k+3]$ and $p\in H^j(\Omega)$. Next, we introduce an interpolation (\cite{MR3771897,2019-weifeng}):
for any $\bm v\in \bm H^s({\rm curl};\Omega)$ with $s>\frac{1}{2}$, we define $\bm\Pi_h^{\bm E}\bm v\in \bm E_h$ such that
\begin{align}\label{def:piE}
\bm{\Pi}_h^{\bm E}\bm v=\bm{\Pi}_{h,k+1}^{\text{curl}}\bm v+\nabla \sigma_h,
\end{align}
where $\sigma_h\in Q_h$ is the solution of the well-posed elliptic problem:
\begin{align}
(\nabla\sigma_h,\nabla q_h)=(\bm v-\bm{\Pi}_{h,k+1}^{\text{curl}}\bm v,\nabla q_h) \qquad \forall q_h\in Q_h.
\end{align}

Utilizing \eqref{app-p} and \eqref{def:piE}, we get the following result immediately.
\begin{lemma} We have the following orthogonality
	\begin{align}\label{or-p}
	(\bm v-\bm{\Pi}_h^{\bm E}\bm v,\nabla q_h)=0
	\end{align}
	hold for all $\bm v\in\bm H^s({\rm curl};\Omega)$ \color{black} with \color{black} $s>\frac{1}{2}$ and $q_h\in Q_h$. In addition, there holds the approximation property
	\begin{align}
	\|\bm v-\bm{\Pi}_h^{\bm E}\bm v\|_0\le 2\|\bm v-\bm{\Pi}_{h,k+1}^{\rm curl}\bm v\|_0.
	\end{align}
\end{lemma}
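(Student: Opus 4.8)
The plan is to dispatch the two assertions separately, each following directly from the variational characterization of $\sigma_h$ attached to \eqref{def:piE}. For the orthogonality \eqref{or-p} I would simply substitute the definition. Writing
\[
\bm v-\bm{\Pi}_h^{\bm E}\bm v=(\bm v-\bm{\Pi}_{h,k+1}^{\text{curl}}\bm v)-\nabla\sigma_h
\]
and pairing against $\nabla q_h$ for an arbitrary $q_h\in Q_h$ gives
\[
(\bm v-\bm{\Pi}_h^{\bm E}\bm v,\nabla q_h)=(\bm v-\bm{\Pi}_{h,k+1}^{\text{curl}}\bm v,\nabla q_h)-(\nabla\sigma_h,\nabla q_h).
\]
The elliptic problem defining $\sigma_h$ states precisely that the two terms on the right coincide, so their difference vanishes. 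This step is entirely routine.

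For the approximation bound, the key observation is that the same defining relation identifies $\nabla\sigma_h$ as the $L^2(\Omega)$-orthogonal projection of the error $\bm e:=\bm v-\bm{\Pi}_{h,k+1}^{\text{curl}}\bm v$ onto the discrete gradient space $\{\nabla q_h:q_h\in Q_h\}$. I would first establish the stability estimate $\|\nabla\sigma_h\|_0\le\|\bm e\|_0$ by taking the admissible choice $q_h=\sigma_h$ in the defining relation to obtain $\|\nabla\sigma_h\|_0^2=(\bm e,\nabla\sigma_h)$, then applying the Cauchy--Schwarz inequality and dividing through by $\|\nabla\sigma_h\|_0$. With this in hand the triangle inequality yields
\[
\|\bm v-\bm{\Pi}_h^{\bm E}\bm v\|_0=\|\bm e-\nabla\sigma_h\|_0\le\|\bm e\|_0+\|\nabla\sigma_h\|_0\le 2\|\bm e\|_0,
\]
which is exactly the claimed bound with constant $2$.

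I do not anticipate any genuine obstacle; the only substantive ingredient is the elementary stability of the projection. I note that the constant could in fact be sharpened from $2$ to $1$ through the Pythagorean identity $\|\bm e\|_0^2=\|\bm e-\nabla\sigma_h\|_0^2+\|\nabla\sigma_h\|_0^2$ (a consequence of the orthogonality just verified with $q_h=\sigma_h$), but the weaker constant stated in the lemma is sufficient for the later analysis and the triangle-inequality argument is slightly cleaner to present. It is also worth recording, for subsequent use, that $\nabla\sigma_h\in\bm E_h$: since $\sigma_h\in Q_h\subset H_0^1(\Omega)$ is a continuous piecewise polynomial of degree at most $k+2$ vanishing on $\partial\Omega$, its gradient is curl-free, lies in $[\mathcal P_{k+1}(\mathcal T_h)]^3$, and has vanishing tangential trace on $\partial\Omega$. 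This confirms that $\bm{\Pi}_h^{\bm E}\bm v\in\bm E_h$, so that the interpolant is well defined.
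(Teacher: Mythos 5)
Your proof is correct and follows the same (essentially unique) route the paper intends: the paper offers no written argument, asserting the lemma follows "immediately" from the definition \eqref{def:piE} and the elliptic problem for $\sigma_h$, and your write-up is precisely that routine verification — orthogonality by substitution, stability $\|\nabla\sigma_h\|_0\le\|\bm v-\bm{\Pi}_{h,k+1}^{\rm curl}\bm v\|_0$ from the choice $q_h=\sigma_h$, then the triangle inequality. Your side remarks (the sharpening of the constant to $1$ via the Pythagorean identity, and the check that $\nabla\sigma_h\in\bm E_h$ so the interpolant is well defined) are both accurate and go slightly beyond what the paper records.
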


\color{black}

%
%
%
%
%

\section{Existence of a unique solution and stability for the mixed method\label{sec:stability}}
In this section, we will derive stability results for the underlying mixed method. To this purpose, we firstly develop a novel discrete Sobolev imbedding inequality which is also an efficient tool for numerical analysis of  nonlinear problems, and we will report it in future works.
All  results  presented in \Cref{s41,s43,s44} are valid as $\Omega\in\mathbb{R}^3$ is a bounded multi-connected Lipschitz polyhedron  even though we only consider in our presentation $\Omega\in\mathbb{R}^3$ to be a bounded simply connected Lipschitz polyhedron.
{\color{black}
The $L^3$ and $L^6$ stability provided in \Cref{s43,s44} allows a better bound of $\bm u_h$ than the $L^2$ stability does.
}

\subsection{ A novel discrete Sobolev imbedding inequality\label{s41}}
We present a  novel discrete Sobolev inequality in the following subsection, which is the {\em first} to be reported in literatures.

\begin{theorem}\label{discrete-H1} Assume that $\Omega$ is a bounded Lipschitz polyhedron {\color{black} (not necessarily simply-connected)} in $\mathbb{R}^3$, then there exists  a  positive constant $C>0$ such that
	\begin{align*}
	\sum_{K\in\mathcal{T}_h}\|\bm v_h\|_{1,K}^2&\le
	C\left[
	\sum_{K\in\mathcal{T}_h}\left(
	\|\nabla\times\bm v_h\|^2_{0,K}
	+\|\nabla\cdot\bm v_h\|^2_{0,K}
	\right)
	+\sum_{F\in\mathcal{E}_h}h_F^{-1}\|[\![\bm v_h]\!]\|^2_{0,F}
	\right],\\
	\|\bm v_h\|_{0,6}^2&\le
	C\left[
	\sum_{K\in\mathcal{T}_h}\left(
	\|\nabla\times\bm v_h\|^2_{0,K}
	+\|\nabla\cdot\bm v_h\|^2_{0,K}
	\right)
	+\sum_{F\in\mathcal{E}_h}h_F^{-1}\|[\![\bm v_h]\!]\|^2_{0,F}
	\right]
	\end{align*}
	hold for all $\bm v_h\in [\mathcal{P}_{\ell}(\mathcal{T}_h)]^3$ with $\ell\ge 1$ being an integer.
\end{theorem}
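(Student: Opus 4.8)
The plan is to reduce both discrete inequalities to the \emph{continuous} Sobolev embedding $[H_0^1(\Omega)]^3\hookrightarrow[L^6(\Omega)]^3$ in three dimensions, together with the integration-by-parts identity $\|\nabla\bm w\|_0^2=\|\nabla\times\bm w\|_0^2+\|\nabla\cdot\bm w\|_0^2$ valid for $\bm w\in[H_0^1(\Omega)]^3$, by means of a conforming reconstruction of the broken polynomial $\bm v_h$. Note that \Cref{embed} alone is not enough here, since it only delivers $H^\alpha$ regularity with $\alpha\le1$; the full $H^1$ control on the left-hand side forces the $H_0^1$-route below. First I would introduce an averaging (Oswald/Karakashian--Pascal type) operator $\mathcal{A}_h:[\mathcal{P}_{\ell}(\mathcal{T}_h)]^3\to[\mathcal{P}_{\ell}(\mathcal{T}_h)]^3\cap[H_0^1(\Omega)]^3$, obtained by averaging the Lagrange nodal values of $\bm v_h$ over the elements sharing each interior node and forcing the value to be zero at boundary nodes. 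Setting $\bm v_h^c:=\mathcal{A}_h\bm v_h$, the standard estimate reads
\[
\sum_{K\in\mathcal{T}_h}\left(h_K^{-2}\|\bm v_h-\bm v_h^c\|_{0,K}^2+\|\nabla(\bm v_h-\bm v_h^c)\|_{0,K}^2\right)\le C\sum_{F\in\mathcal{E}_h}h_F^{-1}\|[\![\bm v_h]\!]\|_{0,F}^2,
\]
where the sum over $\mathcal{E}_h$ \emph{includes} the boundary faces precisely because the nodal values are killed on $\partial\Omega$; this matches exactly the jump term in the statement, and $\mathcal{A}_h$ preserves the polynomial degree so $\bm v_h^c$ lies in the same broken space.

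For the conforming part, since $\bm v_h^c\in[H_0^1(\Omega)]^3$, integration by parts gives
\[
\|\nabla\bm v_h^c\|_0^2=\|\nabla\times\bm v_h^c\|_0^2+\|\nabla\cdot\bm v_h^c\|_0^2
\]
(the cross terms form a divergence and integrate to a vanishing boundary term), while the Friedrichs inequality yields $\|\bm v_h^c\|_0\le C\|\nabla\bm v_h^c\|_0$. Estimating the global curl and divergence of $\bm v_h^c$ by the broken curl and divergence of $\bm v_h$ plus the reconstruction error, I obtain
\[
\|\bm v_h^c\|_1^2\le C\|\nabla\bm v_h^c\|_0^2\le C\left[\sum_{K\in\mathcal{T}_h}\left(\|\nabla\times\bm v_h\|_{0,K}^2+\|\nabla\cdot\bm v_h\|_{0,K}^2\right)+\sum_{F\in\mathcal{E}_h}h_F^{-1}\|[\![\bm v_h]\!]\|_{0,F}^2\right].
\]
For the $L^6$ bound I apply the continuous embedding $\|\bm v_h^c\|_{0,6}\le C\|\nabla\bm v_h^c\|_0$, which holds on any bounded domain for $H_0^1$ functions.

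For the nonconforming remainder, the first inequality follows from $\sum_{K}\|\bm v_h\|_{1,K}^2\le 2\|\bm v_h^c\|_1^2+2\sum_{K}\|\bm v_h-\bm v_h^c\|_{1,K}^2$, the last term being controlled by the reconstruction estimate (the $L^2$ part is absorbed using quasi-uniformity, since $h_K\le\operatorname{diam}(\Omega)$). For the $L^6$ inequality I use the inverse inequality $\|\bm w\|_{0,6,K}\le Ch_K^{-1}\|\bm w\|_{0,K}$ on each tetrahedron together with the elementary embedding $\ell^6\hookrightarrow\ell^2$ of finite sequences:
\[
\|\bm v_h-\bm v_h^c\|_{0,6}\le\Big(\sum_{K}\|\bm v_h-\bm v_h^c\|_{0,6,K}^2\Big)^{1/2}\le C\Big(\sum_{K}h_K^{-2}\|\bm v_h-\bm v_h^c\|_{0,K}^2\Big)^{1/2},
\]
again bounded by the reconstruction estimate. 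Adding the contributions of the conforming part and the remainder yields both inequalities.

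The genuine technical content is the conforming reconstruction of the first step: establishing the averaging operator into $[H_0^1(\Omega)]^3\cap[\mathcal{P}_{\ell}(\mathcal{T}_h)]^3$ with the jump-controlled approximation bounds, and in particular the correct treatment of boundary faces so that the full trace is annihilated (which is what makes the integration-by-parts identity and the Friedrichs and $L^6$ embeddings applicable). Everything downstream is classical and, notably, \emph{topology-independent}, which is consistent with the theorem being asserted for multiply-connected domains.
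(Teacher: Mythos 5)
Your proposal is correct and follows essentially the same route as the paper: a Karakashian--Pascal nodal-averaging reconstruction into $[H_0^1(\Omega)]^3\cap[\mathcal P_\ell(\mathcal T_h)]^3$ with jump-controlled error, the continuous bound $\|\bm w\|_1\le C(\|\nabla\times\bm w\|_0+\|\nabla\cdot\bm w\|_0)$ for $\bm w\in[H_0^1(\Omega)]^3$ (which you sharpen to the exact identity $\|\nabla\bm w\|_0^2=\|\nabla\times\bm w\|_0^2+\|\nabla\cdot\bm w\|_0^2$), and a triangle inequality. The only difference is that for the $L^6$ bound the paper invokes the discrete Sobolev embedding of Di Pietro--Ern as a black box, whereas you rederive that step directly via $H_0^1\hookrightarrow L^6$ on the conforming part plus an elementwise inverse inequality on the remainder, which is a valid and slightly more self-contained variant.
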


 We present in the next {\color{black} lemma} concerning  a continuous Sobolev imbedding inequality before the proof of \Cref{discrete-H1}

\begin{lemma}\label{continous-H1} There exists a positive constant $C$  such that
	\begin{align*}
	\|\bm v\|_1\le C\left(
	\|\nabla\times\bm v\|_0+\|\nabla\cdot\bm v\|_0
	\right)
	\end{align*}
	holds for any $\bm v\in [H^1_0(\Omega)]^3$.
\end{lemma}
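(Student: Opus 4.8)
The plan is to reduce the estimate to the classical integration-by-parts identity that relates the full gradient of a vector field to its curl and divergence. First I would work with $\bm v\in[\mathcal{C}_0^\infty(\Omega)]^3$ and use the vector identity $\nabla\times(\nabla\times\bm v)=\nabla(\nabla\cdot\bm v)-\Delta\bm v$. Pairing this with $\bm v$ in the $L^2(\Omega)$ inner product and integrating by parts term by term, the boundary contributions all vanish because $\bm v$ has compact support, and one is left with the key identity
$$\int_\Omega|\nabla\bm v|^2\,{\rm d}x=\int_\Omega|\nabla\times\bm v|^2\,{\rm d}x+\int_\Omega|\nabla\cdot\bm v|^2\,{\rm d}x.$$
Equivalently, $|\bm v|_1^2=\|\nabla\times\bm v\|_0^2+\|\nabla\cdot\bm v\|_0^2$ for all such test fields.

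Second I would pass from $[\mathcal{C}_0^\infty(\Omega)]^3$ to the whole space $[H_0^1(\Omega)]^3$ by density: both sides of the identity are continuous with respect to the $[H^1(\Omega)]^3$ norm, since $\bm v\mapsto\nabla\times\bm v$ and $\bm v\mapsto\nabla\cdot\bm v$ are bounded linear maps into $[L^2(\Omega)]^3$ and $L^2(\Omega)$ respectively, so the identity persists for every $\bm v\in[H_0^1(\Omega)]^3$.

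Third, to obtain the full $H^1$ norm rather than just the seminorm, I would invoke the Poincar\'e inequality on $H_0^1(\Omega)$, which gives $\|\bm v\|_0\le C|\bm v|_1$ and hence $\|\bm v\|_1^2\le(1+C^2)|\bm v|_1^2$. Combining this with the identity from the first step and the elementary bound $\|\nabla\times\bm v\|_0^2+\|\nabla\cdot\bm v\|_0^2\le(\|\nabla\times\bm v\|_0+\|\nabla\cdot\bm v\|_0)^2$ yields the claimed inequality after taking square roots.

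The result is essentially classical, so I do not anticipate a serious obstacle; the only point requiring care is the bookkeeping in the integration by parts, namely verifying that every boundary integral genuinely drops out for compactly supported fields (in particular the term $\int_{\partial\Omega}(\bm v\cdot\bm n)(\nabla\cdot\bm v)$ and the two surface terms arising from the curl-curl and Laplacian pairings). The homogeneous boundary condition $\bm v|_{\partial\Omega}=\bm 0$ inherent to $[H_0^1(\Omega)]^3$ is exactly what makes all of them vanish, which is why the hypothesis $\bm v\in[H_0^1(\Omega)]^3$, as opposed to merely $\bm H_0({\rm curl};\Omega)$ or $\bm H_0({\rm div};\Omega)$, is essential here.
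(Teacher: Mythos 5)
Your proof is correct and follows essentially the same route as the paper's: both rest on the identity $-\Delta\bm v=\nabla\times(\nabla\times\bm v)-\nabla(\nabla\cdot\bm v)$, integration by parts with the boundary terms killed by the $[H^1_0(\Omega)]^3$ condition, and the Poincar\'e inequality. The only cosmetic difference is that the paper tests against an arbitrary $\bm w\in[H^1_0(\Omega)]^3$ to bound $\|\Delta\bm v\|_{-1}$ and then pairs with $\bm v$, whereas you take $\bm w=\bm v$ directly, which in fact yields the sharper exact identity $|\bm v|_1^2=\|\nabla\times\bm v\|_0^2+\|\nabla\cdot\bm v\|_0^2$.
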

\begin{proof}
	Since $\bm v\in [H^1_0(\Omega)]^3$, it holds $\Delta\bm v\in [H^{-1}(\Omega)]^3$. \color{black} The Poincar{\'e}'s inequality and integration by parts give that
	\begin{align*}
	\|\bm v\|_1^2\le C(\nabla\bm v,\nabla \bm v)=C(-\Delta\bm v,\bm v)\le C\|\Delta\bm v\|_{-1}\|\bm v\|_{1},
	\end{align*}
	and hence
	\begin{align}\label{l61}
	\|\bm v\|_1\le C\|\Delta \bm v\|_{-1}.
	\end{align}
 Meanwhile, utilizing integration by parts and the Cauthy-Schwarz inequality, for any $\bm w\in [H^1_0(\Omega)]^3$, we have
	\begin{align*}
	-(\color{black}\Delta\color{black}\bm v,\bm w)&=(\nabla\times(\nabla\times\bm v),\bm w)-(\color{black}\nabla\color{black}(\nabla\cdot\bm v),\bm w)\nonumber\\
	&=(\nabla\times\bm v,\nabla\times\bm w)+(\nabla\cdot\bm v,\nabla\cdot\bm w)\nonumber\\
	&\le C\left(\|\nabla\times\bm v\|_0
	+\|\nabla\cdot\bm v\|_0
	\right)\|\bm w\|_1.
	\end{align*}
	Then we arrive at
	\begin{align}\label{l62}
	\|\Delta\bm v\|_{-1}=\sup_{\bm 0\neq\bm w\in [H^1_0(\Omega)]^3}\frac{(\color{black}\Delta\color{black}\bm v,\bm w)}{\|\bm w\|_1}\le C\left(\|\nabla\times\bm v\|_0
	+\|\nabla\cdot\bm v\|_0
	\right).
	\end{align}
	The proof can be obtained immediately from the combination of both \eqref{l61} and \eqref{l62}.
\end{proof}

Now, we are in the position to prove \Cref{discrete-H1}:
\begin{proof}[Proof of \Cref{discrete-H1}:]
	By \cite[Theorem 2.2]{MR2034620}, for any $\bm v_h\in [\mathcal{P}_{\ell}(\mathcal{T}_h)]^3$ with $\ell\ge 1$, there exists an interpolation  $\bm{\mathcal I}_{h,\ell}^{\rm c}$ such that $\bm{\mathcal I}_{h,\ell}^{\rm c}\bm v_h\in [H^1_0(\Omega)]^3$, and
	\begin{align}\label{jc}
	\|\bm{\mathcal I}_{h,\ell}^{\rm c}\bm v_h-\bm v_h\|_0
	+h	\left(\sum_{K\in\mathcal{T}_h}\|\nabla(\bm{\mathcal I}_{h,\ell}^{\rm c}\bm v_h-\bm v_h)\|_{0,K}^2\right)^{\frac{1}{2}}
	\le C
	\left(
	\sum_{F\in\mathcal{E}_h}h_F\|[\![
	\bm v_h
	]\!]\|_{0,F}^2
	\right)^{\frac{1}{2}}.
	\end{align}
	Using \Cref{continous-H1}, the triangle inequality and the estimate in \eqref{jc}, we obtain that
	\begin{align*}
	\|\bm{\mathcal I}_{h,\ell}^{\rm c}\bm v_h\|_1^2&\le
	C(\|\nabla\times\bm{\mathcal I}_{h,\ell}^{\rm c}\bm v_h\|_0^2+\|\nabla\cdot\bm{\mathcal I}_{h,\ell}^{\rm c}\bm v_h\|_0^2)\nonumber\\
	&\le C
	\sum_{K\in\mathcal{T}_h}
	\left(
	\|\nabla\times\bm v_h\|_{0,K}^2+\|\nabla\cdot\bm v_h\|_{0,K}^2
	+\|\nabla(\bm{\mathcal I}_{h,\ell}^{\rm c}\bm v_h-\bm v_h   )\|_{0,K}^2\right) \nonumber\\
	&\le  C
	\left[
	\sum_{K\in\mathcal{T}_h}\left(
	\|\nabla\times\bm v_h\|_{0,K}^2+\|\nabla\cdot\bm v_h\|_{0,K}^2
	\right)+\sum_{F\in\mathcal{E}_h}h_F^{-1}\|[\![\bm v_h]\!]\|^2_{0,F}\right].
	\end{align*}
	Due to the above estimate, the triangle inequality and the estimate in \eqref{jc}, the proof of the first inequality could be obtained immediately. The second inequality could be obtained through the combination of the discrete Sobolev imbedding inequality in \cite[Theorem 2.1]{MR2629994} and the first inequality.
\end{proof}

\begin{lemma}[{\cite[Theorem 3.1]{2019-weifeng}}]\label{th:embed} For any $\bm v_h\in [\mathcal{P}_{k+1}(\mathcal{T}_h)]^3$ satisfying
	\begin{align*}
	(\bm v_h,\nabla q_h)=0\, , \qquad\forall\, q_h\in Q_h= H_0^1(\Omega)\cap \mathcal{P}_{k+2}(\mathcal{T}_h),
	\end{align*}
	there exists a positive constant $C$ such that
	\begin{align*}
	\|\bm v_h\|_{0,3}\le C
	\left(
	\sum_{K\in\mathcal{T}_h}\|\nabla\times\bm v_h\|_{0,K}^2
	+\sum_{ F\in\mathcal{E}_h}\color{black} h_F^{-1}\color{black}\|\bm n\times[\![\bm v_h]\!]\|^2_{0,F}
	\right)^{\frac{1}{2}}.
	\end{align*}
\end{lemma}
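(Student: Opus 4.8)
The plan is to reduce the fully discontinuous field $\bm v_h$ to a conforming one and then invoke the continuous embedding of \Cref{embed}. First I would construct, by an Oswald/node-averaging operator $\mathcal I^{\mathrm c}$ acting on the tangential (edge/face) degrees of freedom—analogous to the operator of \cite{MR2034620} used in \Cref{discrete-H1} but for the $\bm H({\rm curl})$-conforming ones—a N\'ed\'elec field $\bm v_h^{\mathrm c}=\mathcal I^{\mathrm c}\bm v_h\in\bm H_0({\rm curl};\Omega)$ whose distance to $\bm v_h$ is driven only by the tangential jumps, namely $\|\bm v_h-\bm v_h^{\mathrm c}\|_{0,K}^2\le C\sum_{F\subset\partial K}h_F\|\bm n\times[\![\bm v_h]\!]\|_{0,F}^2$ on each $K$. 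Combining the elementwise inverse inequality $\|w\|_{0,3,K}\le Ch_K^{-1/2}\|w\|_{0,K}$ with the embedding $\ell^2\hookrightarrow\ell^3$ and quasi-uniformity yields $\|\bm v_h-\bm v_h^{\mathrm c}\|_{0,3}^2\le C\sum_{F}h_F^{-1}\|\bm n\times[\![\bm v_h]\!]\|_{0,F}^2$, and the same jump estimate on $\nabla\times(\bm v_h-\bm v_h^{\mathrm c})$ shows $\|\nabla\times\bm v_h^{\mathrm c}\|_0$ is bounded by the right-hand side. Hence it suffices to bound $\|\bm v_h^{\mathrm c}\|_{0,3}$.

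Next I would split $\bm v_h^{\mathrm c}$ by the continuous Helmholtz decomposition: let $\phi\in H_0^1(\Omega)$ solve $(\nabla\phi,\nabla\psi)=(\bm v_h^{\mathrm c},\nabla\psi)$ for all $\psi\in H_0^1(\Omega)$, and set $\bm z:=\bm v_h^{\mathrm c}-\nabla\phi$. Then $\bm z\in\bm X_N$ with $\nabla\cdot\bm z=0$ and $\nabla\times\bm z=\nabla\times\bm v_h^{\mathrm c}$, so \Cref{embed} (giving $\bm z\in[H^\alpha(\Omega)]^3\hookrightarrow [L^3(\Omega)]^3$ for $\alpha>\tfrac12$) together with the Friedrichs inequality $\|\bm z\|_0\le C\|\nabla\times\bm z\|_0$ gives $\|\bm z\|_{0,3}\le C\|\bm z\|_{\alpha}\le C\|\nabla\times\bm v_h^{\mathrm c}\|_0$, again controlled by the right-hand side. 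This reduces everything to estimating the irrotational part $\|\nabla\phi\|_{0,3}$, and it is precisely here that the discrete gradient-orthogonality must be used.

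For $\nabla\phi$ the constraint gives at once, for every $q_h\in Q_h$, $(\nabla\phi,\nabla q_h)=(\bm v_h^{\mathrm c},\nabla q_h)=(\bm v_h^{\mathrm c}-\bm v_h,\nabla q_h)$, so the $Q_h$-Ritz projection $\phi_h$ of $\phi$ satisfies $\|\nabla\phi_h\|_0\le C\|\bm v_h^{\mathrm c}-\bm v_h\|_0$, i.e. it is controlled by the tangential jumps. The main obstacle, as I see it, is to upgrade this $L^2$ control of the projection into an $L^3$ control of the full gradient $\nabla\phi$. I would attempt this by duality: writing $\|\nabla\phi\|_{0,3}=\sup_{\|\bm g\|_{0,3/2}=1}(\nabla\phi,\bm g)$, solving the adjoint Poisson problem for the potential $\xi\in W_0^{1,3/2}(\Omega)$ of $\bm g$, and using the definition of $\phi$ to obtain $(\nabla\phi,\bm g)=(\bm v_h^{\mathrm c}-\bm v_h,\nabla\xi)+(\bm v_h,\nabla(\xi-I_h\xi))$, with $I_h\xi\in Q_h$ removed by the constraint. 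The first term is bounded by the $L^3$ estimate for $\bm v_h^{\mathrm c}-\bm v_h$ already established in the first step. The delicate point is the second term, whose treatment must exploit the $W^{1,3/2}$ (fractional) elliptic regularity available on a general Lipschitz polyhedron and, crucially, the fact that $Q_h$ has polynomial degree $k+2$—one higher than that of $\bm v_h$—so that the approximation of $\nabla\xi$ by $\nabla Q_h$ is fine enough to absorb the residual face and divergence contributions. Closing this last estimate rigorously is the technical heart of the statement and is exactly the content of \cite[Theorem 3.1]{2019-weifeng}.
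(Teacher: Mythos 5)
First, a structural point: the paper does not prove this lemma at all --- it is imported verbatim from \cite[Theorem 3.1]{2019-weifeng} --- so there is no in-paper argument to compare against; your proposal has to stand on its own. Its first two steps are sound (the averaging operator with jump-controlled distance, the $L^3$ bound on $\bm v_h-\bm v_h^{\mathrm c}$ via the elementwise inverse inequality, and the treatment of the solenoidal part $\bm z$ through \Cref{embed} and the Friedrichs inequality on $\bm X_N$). The genuine gap is exactly where you place ``the technical heart'': the irrotational part $\nabla\phi$ of the \emph{continuous} Helmholtz decomposition. Your duality argument does not close. In $(\nabla\phi,\bm g)=(\bm v_h^{\mathrm c}-\bm v_h,\nabla\xi)+(\bm v_h,\nabla(\xi-I_h\xi))$, the potential $\xi$ of a generic $\bm g\in [L^{3/2}(\Omega)]^3$ on a Lipschitz polyhedron has no smoothness beyond $\nabla\xi\in [L^{3/2}(\Omega)]^3$, so $\|\nabla(\xi-I_h\xi)\|_{0,3/2}$ is only bounded by $C\|\nabla\xi\|_{0,3/2}$ with a constant that is not small; the second term is then merely $\le C\|\bm v_h\|_{0,3}\|\bm g\|_{0,3/2}$ and cannot be absorbed into the left-hand side. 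Integrating by parts elementwise instead produces $\nabla\cdot\bm v_h$ and normal jumps of $\bm v_h$, neither of which appears on the right-hand side of the lemma; and the extra polynomial degree of $Q_h$ does not rescue the estimate, because the obstruction is the regularity of $\xi$, not the approximation order. Deferring this step to the cited theorem concedes that the proof is incomplete.

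The repair is to take the gradient part \emph{discretely} rather than continuously. Since the averaging operator lands in $\bm E_h$ and $\nabla Q_h\subset\bm E_h$, define $\phi_h\in Q_h$ by $(\nabla\phi_h,\nabla q_h)=(\bm v_h^{\mathrm c},\nabla q_h)=(\bm v_h^{\mathrm c}-\bm v_h,\nabla q_h)$ for all $q_h\in Q_h$. Taking $q_h=\phi_h$ gives
\begin{align*}
\|\nabla\phi_h\|_0\le\|\bm v_h^{\mathrm c}-\bm v_h\|_0\le C\Bigl(\sum_{F\in\mathcal E_h}h_F\|\bm n\times[\![\bm v_h]\!]\|_{0,F}^2\Bigr)^{\frac12}\le Ch\Bigl(\sum_{F\in\mathcal E_h}h_F^{-1}\|\bm n\times[\![\bm v_h]\!]\|_{0,F}^2\Bigr)^{\frac12},
\end{align*}
so the inverse inequality $\|\nabla\phi_h\|_{0,3}\le Ch^{-1/2}\|\nabla\phi_h\|_0$ costs only $h^{-1/2}$ and leaves this piece $O(h^{1/2})$ times the right-hand side --- this is precisely where the $h_F^{+1}$ versus $h_F^{-1}$ gain that your continuous route discards gets used. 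The remainder $\bm r_h=\bm v_h^{\mathrm c}-\nabla\phi_h\in\bm E_h$ is conforming, discretely divergence-free, and satisfies $\nabla\times\bm r_h=\nabla\times\bm v_h^{\mathrm c}$; it is handled by the discrete Hodge mapping of \cite[Lemma 4.5]{MR2009375} (already invoked in the proof of \Cref{dual-u}) combined with \Cref{embed}, exactly as in your argument for $\bm z$. With that substitution the proof closes.
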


With the above lemma, we can derive discrete Sobolev inequalities for $\bm H({\rm curl})$-conforming functions.
\begin{lemma} \label{th:embed2} For any $\bm v_h\in \bm E_h=\bm H_0({\rm curl};\Omega)\cap [\mathcal P_{k+1}(\mathcal{T}_h)]^3$, if there holds
	\begin{align}\label{conditon}
	(\bm v_h,\nabla q_h)=0\, ,\qquad\forall\, q_h\in Q_h= H_0^1(\Omega)\cap \mathcal{P}_{k+2}(\mathcal{T}_h),
	\end{align}
	then we have
	\begin{align}\label{L31}
	\|\bm v_h\|_{0,3}\le C\|\nabla\times\bm v_h\|_{0},
	\end{align}
	and
	\begin{align}\label{L32}
	\|\nabla\times\bm v_h\|_{0,3}\le C
	\left(
	\sum_{K\in\mathcal{T}_h}\|\nabla\times(\nabla\times\bm v_h)\|_{0,K}^2
	+\sum_{E\in\mathcal{F}_h}\color{black}h_F^{-1}\color{black}\|\bm n\times[\![\nabla\times\bm v_h]\!]\|^2_{0,F}
	\right)^{\frac{1}{2}}.
	\end{align}
\end{lemma}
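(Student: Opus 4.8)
The plan is to deduce both inequalities from \Cref{th:embed} by applying it to two different piecewise polynomials, exploiting the $\bm H_0({\rm curl})$-conformity of $\bm v_h$ in two distinct ways. Throughout I will use that a tangentially continuous piecewise polynomial is a genuine $\bm H({\rm curl};\Omega)$ function, so that element-wise and distributional curls coincide.

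For \eqref{L31} I would apply \Cref{th:embed} directly to $\bm v_h$ itself. The hypothesis \eqref{conditon} is exactly the orthogonality required there, so the only point to verify is that the face term drops out. Since $\bm v_h\in\bm E_h\subset\bm H_0({\rm curl};\Omega)$ is $\bm H({\rm curl})$-conforming, its tangential trace is single-valued across every interior face, whence $\bm n\times[\![\bm v_h]\!]=\bm 0$ there; on a boundary face the convention $[\![\bm v_h]\!]=\bm v_h$ combined with $\bm n\times\bm v_h|_{\partial\Omega}=\bm 0$ gives the same conclusion. Thus $\sum_{F}h_F^{-1}\|\bm n\times[\![\bm v_h]\!]\|_{0,F}^2=0$, and since $\bm v_h\in\bm H({\rm curl})$ gives $\sum_K\|\nabla\times\bm v_h\|_{0,K}^2=\|\nabla\times\bm v_h\|_0^2$, \Cref{th:embed} yields \eqref{L31} immediately.

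For \eqref{L32} I would instead apply \Cref{th:embed} to the piecewise polynomial $\bm w_h:=\nabla\times\bm v_h$. As $\bm v_h$ has componentwise degree at most $k+1$, we have $\bm w_h\in[\mathcal P_k(\mathcal T_h)]^3\subset[\mathcal P_{k+1}(\mathcal T_h)]^3$, so $\bm w_h$ is admissible. The substantive step, and the one I expect to be the main obstacle, is checking the orthogonality hypothesis for $\bm w_h$, namely $(\nabla\times\bm v_h,\nabla q_h)=0$ for all $q_h\in Q_h$. This follows from the defining property of $\bm H_0({\rm curl};\Omega)$: since $\nabla q_h\in\bm H({\rm curl};\Omega)$ with $\nabla\times\nabla q_h=\bm 0$, the integration-by-parts identity $(\nabla\times\bm v_h,\nabla q_h)=(\bm v_h,\nabla\times\nabla q_h)$ holds with vanishing boundary contribution (the boundary term being controlled by $\bm n\times\bm v_h|_{\partial\Omega}=\bm 0$), and the right-hand side is $\bm 0$. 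With the hypothesis confirmed, \Cref{th:embed} applied to $\bm w_h$ reads
$$\|\nabla\times\bm v_h\|_{0,3}\le C\left(\sum_{K}\|\nabla\times(\nabla\times\bm v_h)\|_{0,K}^2+\sum_{F}h_F^{-1}\|\bm n\times[\![\nabla\times\bm v_h]\!]\|_{0,F}^2\right)^{1/2},$$
which is precisely \eqref{L32}. Here, in contrast to \eqref{L31}, the face jumps of $\bm w_h=\nabla\times\bm v_h$ genuinely survive, because tangential continuity of $\bm v_h$ does not force continuity of its curl. Everything outside the orthogonality verification is a direct invocation of \Cref{th:embed} together with the bookkeeping identity $\sum_K\|\nabla\times\bm v_h\|_{0,K}^2=\|\nabla\times\bm v_h\|_0^2$.
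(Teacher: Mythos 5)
Your proposal is correct and follows essentially the same route as the paper: both inequalities are obtained by invoking \Cref{th:embed}, applied to $\bm v_h$ itself (with the tangential jump term vanishing by $\bm H_0({\rm curl})$-conformity) and to $\nabla\times\bm v_h$. The only cosmetic difference is in verifying the orthogonality $(\nabla\times\bm v_h,\nabla q_h)=0$: you move the curl onto $\nabla q_h$ (using $\nabla\times\nabla q_h=\bm 0$ and the vanishing tangential trace of $\bm v_h$), whereas the paper integrates by parts the other way to get $-(\nabla\cdot(\nabla\times\bm v_h),q_h)=0$; both are valid one-line arguments.
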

\begin{proof} The result \eqref{L31} follows from \eqref{conditon},
	\Cref{th:embed}, and the fact that $\bm v_h\in \bm H_0({\rm curl};\Omega)$.  Since $\nabla\cdot(\nabla\times\bm v_h)=0$ holds for any $q_h\in Q_h$, we have that
	\begin{align*}
	(\nabla\times\bm v_h,\nabla q_h)=-(\nabla\cdot(\nabla\times\bm v_h), q_h)=0
	\end{align*}
	from which the result \eqref{L32} follows immediately because of \Cref{th:embed}.
\end{proof}

\color{black}
\subsection{Existence of a unique solution\label{s42}}

In this subsection, we   state the existence of a unique solution of \eqref{FEM}.
\begin{theorem} The variational equation \eqref{FEM} admits a unique solution  $(\bm u_h,p_h)\in \bm E_h\times Q_h$ as $\tau$ is sufficiently large.
\end{theorem}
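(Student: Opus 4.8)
The plan is to exploit the fact that \eqref{FEM} is a linear square system, so that existence is equivalent to uniqueness; it therefore suffices to show that the homogeneous problem (with $\bm f=\bm 0$) admits only the trivial solution. Denote by $a_h(\cdot,\cdot)$ the bilinear form consisting of all the terms on the left-hand side of \eqref{fem01} except the coupling term $(\nabla p_h,\bm v_h)$ (using the symmetric choice `$-$'). Taking $\bm f=\bm 0$, I would choose $\bm v_h=\bm u_h$ in \eqref{fem01} and $q_h=p_h$ in \eqref{fem02}. The coupling term then drops out, since $(\nabla p_h,\bm u_h)=(\bm u_h,\nabla p_h)=0$ by \eqref{fem02}, leaving $a_h(\bm u_h,\bm u_h)=0$.

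The next step is to establish coercivity of $a_h$ for $\tau$ sufficiently large. The two consistency (cross) terms are controlled by a discrete trace inequality of the form $\|\{\!\!\{\nabla\times(\nabla\times\bm u_h)\}\!\!\}\|_{0,F}^2\le Ch_F^{-1}\sum_{K}\|\nabla\times(\nabla\times\bm u_h)\|_{0,K}^2$ together with Young's inequality, which absorbs a small multiple of $\sum_{K}\|\nabla\times(\nabla\times\bm u_h)\|_{0,K}^2$ and of $\sum_{F}h_F^{-1}\|\bm n\times[\![\nabla\times\bm u_h]\!]\|_{0,F}^2$. Choosing $\tau$ large enough then gives
\begin{align*}
\sum_{K\in\mathcal T_h}\|\nabla\times(\nabla\times\bm u_h)\|_{0,K}^2+\sum_{F\in\mathcal E_h}h_F^{-1}\|\bm n\times[\![\nabla\times\bm u_h]\!]\|_{0,F}^2\le C\,a_h(\bm u_h,\bm u_h)=0,
\end{align*}
so that $\nabla\times(\nabla\times\bm u_h)=\bm 0$ on every $K$ and $\bm n\times[\![\nabla\times\bm u_h]\!]=\bm 0$ on every $F$.

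I would then invoke the discrete Sobolev inequalities of \Cref{th:embed2}. Because $\bm u_h\in\bm E_h$ satisfies the gauge condition \eqref{fem02}, which is precisely hypothesis \eqref{conditon}, the entire right-hand side of \eqref{L32} vanishes, whence $\nabla\times\bm u_h=\bm 0$; then \eqref{L31} forces $\bm u_h=\bm 0$. I expect this to be the crux of the argument: coercivity alone controls only $\nabla\times(\nabla\times\bm u_h)$ together with the tangential jumps of $\nabla\times\bm u_h$, and it is exactly the novel discrete imbedding results of \Cref{th:embed2} (built upon \Cref{discrete-H1}) that allow one to promote this partial information into full control of $\bm u_h$ itself, without assuming any additional regularity.

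Finally, with $\bm u_h=\bm 0$ and $\bm f=\bm 0$, \eqref{fem01} reduces to $(\nabla p_h,\bm v_h)=0$ for all $\bm v_h\in\bm E_h$. Since $\nabla Q_h\subset\bm E_h$ (the gradient of a $\mathcal P_{k+2}$ function lying in $H^1_0(\Omega)$ is a curl-free piecewise $\mathcal P_{k+1}$ field with vanishing tangential trace, hence belongs to $\bm H_0({\rm curl};\Omega)\cap[\mathcal P_{k+1}(\mathcal T_h)]^3$), I may take $\bm v_h=\nabla p_h$ to obtain $\|\nabla p_h\|_0^2=0$; Poincar\'e's inequality together with $p_h\in H^1_0(\Omega)$ then yields $p_h=0$. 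This establishes uniqueness, and hence existence, of the discrete solution once $\tau$ is taken large enough to ensure coercivity.
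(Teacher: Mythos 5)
Your proof is correct. The substantive ingredients are exactly those of the paper: coercivity of the curl--curl part for $\tau$ large via the discrete trace inequality and Young's inequality (this is the paper's \eqref{ellip2}, reused in \Cref{stable0}), the discrete Sobolev inequalities of \Cref{th:embed2} to upgrade control of $\nabla\times(\nabla\times\bm u_h)$ and the tangential jumps into control of $\bm u_h$ itself under the gauge condition \eqref{fem02}, and the inclusion $\nabla Q_h\subset\bm E_h$ to handle the multiplier. The only difference is in the packaging: the paper organizes these facts as kernel coercivity \eqref{ellip1}--\eqref{ellip2} plus the inf-sup condition \eqref{LBB} and then cites the abstract Babu\v{s}ka--Brezzi theory, whereas you exploit the finite dimensionality of the square Galerkin system to reduce everything to uniqueness of the homogeneous problem and argue directly. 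Your step of taking $\bm v_h=\nabla p_h$ to force $p_h=0$ is precisely the paper's inf-sup verification \eqref{LBB} in disguise. Your route is marginally more elementary (no appeal to the abstract mixed framework), at the cost of not producing the quantitative stability bound in the $\interleave\cdot\interleave$ norm that the Brezzi machinery delivers for free; since the paper proves stability separately in \Cref{stable0} and \Cref{th:stability} anyway, nothing is lost.
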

\begin{proof}
	We define a mesh-dependent norm as
	\begin{align*}
	\interleave\bm v_h \interleave^2&:=\|\bm v_h\|^2_0
	+\|\nabla\times\bm v_h\|^2_0
	+\sum_{K\in\mathcal{T}_h}\|\nabla\times(\nabla\times\bm v_h)\|^2_{0,K}+\sum_{F\in\mathcal{E}_h}\tau h_F^{-1}\|\bm n\times[\![\nabla\times\bm v_h]\!]\|^2_{0,F}.
	\end{align*}
	For all $\bm u_h\in \bm E_h$ satisfying
	\begin{align*}
	(\bm u_h,\nabla q_h)=0\, , \qquad \forall \,q_h\in Q_h,
	\end{align*}
	 there holds, due to  \Cref{th:embed2},
	\begin{align}\label{ellip1}
	\interleave \bm u_h \interleave^2&\le
	C\left(\sum_{K\in\mathcal{T}_h}\|\nabla\times(\nabla\times\bm u_h)\|^2_{0,K}
	+\sum_{F\in\mathcal{E}_h}\tau h_F^{-1}\|\bm n\times[\![\nabla\times\bm u_h]\!]\|^2_{0,F}\right),
	\end{align}
	and
	\begin{align}\label{ellip2}
	&
	\sum_{K\in\mathcal{T}_h}(\nabla\times(\nabla\times\bm u_h),\nabla\times(\nabla\times\bm u_h))_{K}\nonumber\\
	&\quad-2\sum_{F\in\mathcal{E}_h}\langle\{\!\!\{ \nabla\times(\nabla\times\bm u_h)\}\!\!\},\bm n\times[\![\nabla\times\bm u_h]\!] \rangle_F\nonumber\\
	&\quad
	+\sum_{F\in\mathcal{E}_h}\tau h_F^{-1}\langle\bm n\times[\![\nabla\times\bm u_h]\!],\bm n\times[\![\nabla\times\bm u_h]\!] \rangle_F\nonumber\\
	&\ge \sum_{K\in\mathcal{T}_h}\|\nabla\times(\nabla\times\bm u_h)\|^2_{0,K}
	-C\sum_{K\in\mathcal T_h}h_K^{-\frac{1}{2}}\|\nabla\times(\nabla\times\bm u_h)\|_{0,K}
	\|\bm n\times[\![\nabla\times\bm u_h]\!]\|_{0,\partial K}\nonumber\\
	&\quad+\sum_{F\in\mathcal{E}_h}\tau h_F^{-1}\|\bm n\times[\![\nabla\times\bm u_h]\!]\|^2_{0,F}\nonumber\\
	&\ge C\left(\sum_{K\in\mathcal{T}_h}\|\nabla\times(\nabla\times\bm u_h)\|^2_{0,K}
	+\sum_{F\in\mathcal{E}_h}\tau h_F^{-1}\|\bm n\times[\![\nabla\times\bm u_h]\!]\|^2_{0,F}\right).
	\end{align}
	Here, $\tau$ is a sufficiently large constant. 	By taking $\bm v_h=\nabla q_h\in\bm E_h$ and noticing $\nabla\times(\nabla q_h)=0$, we get
	\begin{align}\label{LBB}
	\sup_{\bm 0\neq \bm v_h\in \bm E_h}\frac{(\bm v_h,\nabla q_h)}{ \interleave\bm v_h \interleave
	}\ge \|\nabla q_h\|_0.
	\end{align}
Then the existence of a unique solution is followed by \eqref{ellip1}, \eqref{ellip2}, \eqref{LBB}, and the theory for the mixed problem \cite[Theorem 1.1]{MR1115205}.
\end{proof}

\color{black}

\subsection{$L^3$ stability of $u_h$ and $\nabla\times u_h$\label{s43}}

\begin{lemma}\label{stable0} 	Let $(\bm u_h,p_h)\color{black}\in \bm E_h\times Q_h$ be the solution of \eqref{FEM}, then
	when the constant $\tau>0$ is sufficient large, we have
	\begin{align*}
	\sum_{K\in\mathcal{T}_h}\|\nabla\times(\nabla\times\bm u_h)\|^2_{0,K}
	+\sum_{F\in\mathcal{E}_h}\color{black}\tau h_F^{-1}\color{black}\|\bm n\times[\![\nabla\times\bm u_h]\!]\|^2_{0,F}
	\le C(\bm f,\bm u_h).
	\end{align*}
\end{lemma}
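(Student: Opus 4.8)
The plan is to test the discrete system against its own solution and use the discrete incompressibility constraint to annihilate the pressure, after which the asserted bound is essentially the coercivity estimate \eqref{ellip2}, which is already in hand. I would therefore work entirely with the energy identity obtained by a single well-chosen choice of test functions.

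First I would take $\bm v_h=\bm u_h$ in the momentum equation \eqref{fem01} (in the symmetric case, where $\mp$ is read as $-$). The left-hand side then becomes precisely the quadratic form on the left of \eqref{ellip2} evaluated at $\bm u_h$, namely the volume term $\sum_{K}\|\nabla\times(\nabla\times\bm u_h)\|_{0,K}^2$, the symmetric consistency cross-term $-2\sum_{F}\langle\{\!\!\{\nabla\times(\nabla\times\bm u_h)\}\!\!\},\bm n\times[\![\nabla\times\bm u_h]\!]\rangle_F$, and the penalty term $\sum_{F}\tau h_F^{-1}\|\bm n\times[\![\nabla\times\bm u_h]\!]\|_{0,F}^2$, together with the coupling contribution $(\nabla p_h,\bm u_h)$; the right-hand side is $(\bm f,\bm u_h)$.

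Next I would remove the pressure. Choosing $q_h=p_h$ in the constraint \eqref{fem02} gives $(\bm u_h,\nabla p_h)=0$, and since $(\nabla p_h,\bm u_h)=(\bm u_h,\nabla p_h)$ the coupling term vanishes identically. Thus the quadratic form above equals $(\bm f,\bm u_h)$ exactly. Finally I would invoke \eqref{ellip2}: because the solution $\bm u_h$ of \eqref{FEM} satisfies $(\bm u_h,\nabla q_h)=0$ for all $q_h\in Q_h$ by \eqref{fem02}, and because $\tau$ is sufficiently large, \eqref{ellip2} bounds this quadratic form from below by $C\big(\sum_{K}\|\nabla\times(\nabla\times\bm u_h)\|_{0,K}^2+\sum_{F}\tau h_F^{-1}\|\bm n\times[\![\nabla\times\bm u_h]\!]\|_{0,F}^2\big)$. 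Chaining the lower bound with the identity yields the claim at once.

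I do not expect a genuine analytic obstacle here. The one delicate point — estimating the symmetric consistency cross-term against the volume and jump contributions via the inverse trace inequality $\|\nabla\times(\nabla\times\bm u_h)\|_{0,\partial K}\le C h_K^{-1/2}\|\nabla\times(\nabla\times\bm u_h)\|_{0,K}$ followed by Young's inequality, which is exactly what forces $\tau$ to be taken large — has already been carried out in the derivation of \eqref{ellip2}. Consequently the present lemma reduces to the one-line energy identity produced by the choices $\bm v_h=\bm u_h$ and $q_h=p_h$, plus a direct appeal to that coercivity estimate.
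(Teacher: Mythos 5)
Your proposal is correct and follows essentially the same route as the paper: test \eqref{fem01} with $\bm v_h=\bm u_h$ and \eqref{fem02} with $q_h=p_h$ to kill the pressure coupling and obtain the energy identity, then absorb the symmetric consistency term into the volume and penalty terms for $\tau$ large. The only cosmetic difference is that you cite the coercivity bound \eqref{ellip2} already established in the well-posedness proof, whereas the paper re-derives that absorption inline via Cauchy--Schwarz, the discrete trace inequality, and Young's inequality; the content is identical.
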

\begin{proof}
	We take $\bm v_h=\bm u_h\in \bm E_h$ in \eqref{fem01}, and $q_h=p_h\in Q_h$ in \eqref{fem02}, and combine the corresponding  equalities to obtain
	\begin{align}\label{PP1}
	\sum_{K\in\mathcal{T}_h}\|\nabla\times(\nabla\times\bm u_h)\|^2_{0,K}
	-2\sum_{F\in\mathcal{E}_h}\langle\{\!\!\{ \nabla\times(\nabla\times\bm u_h)\}\!\!\},\bm n\times[\![\nabla\times\bm u_h]\!] \rangle_F&\nonumber\\
	+\sum_{F\in\mathcal{E}_h}\color{black}\tau h_F^{-1}\color{black}\|\bm n\times[\![\nabla\times\bm u_h]\!]\|^2_{0,F}&=(\bm f,\bm u_h).
	\end{align}
	Since $\tau>0$ is a sufficient large constant, we have
	\begin{align}\label{PP2}
	&\left|
	2\sum_{F\in\mathcal{E}_h}\langle\{\!\!\{ \nabla\times(\nabla\times\bm u_h)\}\!\!\},\bm n\times[\![\nabla\times\bm u_h]\!] \rangle_F
	\right|\nonumber\\
	&\qquad\le 2\sum_{F\in\mathcal{E}_h}\|\{\!\!\{ \nabla\times(\nabla\times\bm u_h)\}\!\!\}\|_{0,F}
	\|\bm n\times[\![\nabla\times\bm u_h]\!]\|_{0,F} \nonumber\\
	&\qquad\le
	\frac{1}{2}\sum_{K\in\mathcal{T}_h}\|\nabla\times(\nabla\times\bm u_h)\|^2_{0,K}
	+\frac{1}{2}\sum_{F\in\mathcal{E}_h}\color{black}\tau h_F^{-1}\color{black}\|\bm n\times[\![\nabla\times\bm u_h]\!]\|^2_{0,F}.
	\end{align}
	Combining \eqref{PP1} and \eqref{PP2} yields
	\begin{align*}
	\sum_{K\in\mathcal{T}_h}\|\nabla\times(\nabla\times\bm u_h)\|^2_{0,K}
	+\sum_{F\in\mathcal{E}_h}\color{black}\tau h_F^{-1}\color{black}\|\bm n\times[\![\nabla\times\bm u_h]\!]\|^2_{0,F}
	\le C(\bm f,\bm u_h).
	\end{align*}
\end{proof}

With the above lemma, we are ready to prove the following stability result.

\begin{theorem} \label{th:stability}
	Let $(\bm u_h,p_h)\color{black}\in \bm E_h\times Q_h$ be the solution of \eqref{FEM}, then
	when the constant $\tau>0$ is  sufficient large, we have the following stability result
	\begin{align*}
	&\|\bm u_h\|_{0,3}+\|\nabla\times \bm u_h\|_{0,3}
	\nonumber\\
	&+\left(
	\sum_{K\in\mathcal{T}_h}\|\nabla\times(\nabla\times\bm u_h)\|^2_{0,K}
	+\sum_{F\in\mathcal{E}_h}\color{black}\tau h_F^{-1}\color{black}\|\bm n\times[\![\nabla\times\bm u_h]\!]\|^2_{0,F}
	\right)^{\frac{1}{2}}
	\le C\|\bm f\|_{0,\frac{3}{2}}.
	\end{align*}
\end{theorem}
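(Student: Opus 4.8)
The plan is to reduce the entire left-hand side to the double-curl energy seminorm
\[
S := \Bigl(\sum_{K\in\mathcal{T}_h}\|\nabla\times(\nabla\times\bm u_h)\|^2_{0,K}+\sum_{F\in\mathcal{E}_h}\tau h_F^{-1}\|\bm n\times[\![\nabla\times\bm u_h]\!]\|^2_{0,F}\Bigr)^{1/2},
\]
and then to bound $S$ by $\|\bm f\|_{0,3/2}$ using \Cref{stable0} together with a duality argument. First I would observe that taking $q_h\in Q_h$ arbitrary in \eqref{fem02} shows that $\bm u_h$ satisfies the orthogonality hypothesis $(\bm u_h,\nabla q_h)=0$ of \Cref{th:embed2}, so that \eqref{L31} and \eqref{L32} are available for $\bm u_h$. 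Since $\tau\ge 1$ for $\tau$ large, the seminorm on the right of \eqref{L32} is dominated by $S$, whence $\|\nabla\times\bm u_h\|_{0,3}\le CS$.

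For the zeroth-order term I would chain \eqref{L31} with the elementary bounded-domain Hölder inequality $\|\cdot\|_0\le C\|\cdot\|_{0,3}$ applied to $\nabla\times\bm u_h$. This is legitimate because $\bm u_h\in\bm H_0({\rm curl};\Omega)$ renders $\nabla\times\bm u_h$ a genuine global field with no interelement jumps, so both its $L^2$ and $L^3$ norms are global and comparable on the bounded domain $\Omega$. Concretely, $\|\bm u_h\|_{0,3}\le C\|\nabla\times\bm u_h\|_0\le C\|\nabla\times\bm u_h\|_{0,3}\le CS$. At this stage all three quantities $\|\bm u_h\|_{0,3}$, $\|\nabla\times\bm u_h\|_{0,3}$, and $S$ itself are controlled by $CS$.

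It then remains to prove $S\le C\|\bm f\|_{0,3/2}$. \Cref{stable0} furnishes the quadratic bound $S^2\le C(\bm f,\bm u_h)$. The decisive step is to estimate the right-hand side by Hölder's inequality in the conjugate pair $(3/2,3)$, namely $(\bm f,\bm u_h)\le\|\bm f\|_{0,3/2}\|\bm u_h\|_{0,3}$, and then to reuse the embedding $\|\bm u_h\|_{0,3}\le CS$ established above. This yields $S^2\le C\|\bm f\|_{0,3/2}\,S$, hence $S\le C\|\bm f\|_{0,3/2}$, and substituting back into the bounds of the previous step completes the estimate.

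The main obstacle, and the reason the scheme was built around \Cref{th:embed2}, is precisely this coupling: \Cref{stable0} alone only controls $S^2$ by the non-coercive pairing $(\bm f,\bm u_h)$, which cannot be closed by a naive Cauchy--Schwarz estimate $\|\bm f\|_0\|\bm u_h\|_0$ without first controlling $\|\bm u_h\|_0$ (equivalently $\|\nabla\times\bm u_h\|_0$) by $S$. The discrete Sobolev embeddings \eqref{L31}--\eqref{L32} supply exactly the missing a-priori control of $\bm u_h$ and $\nabla\times\bm u_h$ in $L^3$ by $S$, and pairing this against $\bm f$ in the dual exponent $3/2$ is what allows the bootstrap to close with the sharp data norm $\|\bm f\|_{0,3/2}$ rather than $\|\bm f\|_0$.
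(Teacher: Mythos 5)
Your proposal is correct and follows essentially the same route as the paper: it invokes \Cref{th:embed2} (via the orthogonality from \eqref{fem02}) to obtain $\|\bm u_h\|_{0,3}\le C\|\nabla\times\bm u_h\|_{0}\le C\|\nabla\times\bm u_h\|_{0,3}\le CS$, and then closes the bootstrap with \Cref{stable0} and the H\"older pairing $(\bm f,\bm u_h)\le \|\bm f\|_{0,3/2}\|\bm u_h\|_{0,3}$ to get $S\le C\|\bm f\|_{0,\frac{3}{2}}$. This is exactly the paper's argument (the paper labels the final step ``Cauchy--Schwarz,'' but it is the same $(3/2,3)$ H\"older estimate you describe).
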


\begin{proof}
	
	Using \Cref{th:embed2}, one obtains
	\begin{align}\label{PP4}
	\|\bm u_h\|_{0,3}\le C\|\nabla\times\bm u_h\|_{0}\le C\|\nabla\times\bm u_h\|_{0,3},
	\end{align}
	and
	\begin{align}\label{PP3}
	\|\nabla\times\bm u_h\|_{0,3}\le
	C\left(
	\sum_{K\in\mathcal{T}_h}\|\nabla\times(\nabla\times\bm u_h)\|^2_{0,K}
	+\sum_{F\in\mathcal{E}_h}\color{black}\tau h_F^{-1}\color{black}\|\bm n\times[\![\nabla\times\bm u_h]\!]\|^2_{0,F}
	\right)^{\frac{1}{2}}.
	\end{align}
Therefore, applying \eqref{PP4}, \eqref{PP3}, \Cref{stable0} and the Cauthy-Schwarz inequality, we arrive at
	\begin{align*}
	&\|\bm u_h\|_{0,3}+\|\nabla\times \bm u_h\|_{0,3}+\left(
	\sum_{K\in\mathcal{T}_h}\|\nabla\times(\nabla\times\bm u_h)\|^2_{0,K}
	+\sum_{F\in\mathcal{E}_h}\color{black}\tau h_F^{-1}\color{black}\|\bm n\times[\![\nabla\times\bm u_h]\!]\|^2_{0,F}
	\right)^{\frac{1}{2}}\\ \nonumber
	&\le C\|\bm f\|_{0,\frac{3}{2}}.
	\end{align*}
\end{proof}

\subsection{ $L^6$ and discrete $H^1$ stability of $\nabla\times u_h$ \label{s44}}

\begin{theorem} \label{th:stability2}
	Let $(\bm u_h,p_h)\color{black}\in \bm E_h\times Q_h$ be the solution of \eqref{FEM},
	when $\tau>0$ is a sufficient large constant, then we have the following stability result
	\begin{align*}
	\|\nabla\times\bm u_h\|_{0,6}^2+
	\sum_{K\in\mathcal{T}_h}\|\nabla(\nabla\times\bm u_h)\|_{0,K}^2
	+\sum_{F\in\mathcal{E}_h}h_F^{-1}\|[\![\nabla\times\bm u_h]\!]\|_{0,F}^2&\nonumber\\
	+
	\sum_{K\in\mathcal{T}_h}\|\nabla\times(\nabla\times\bm u_h)\|_{0,K}^2
	+\sum_{F\in\mathcal{E}_h}h_F^{-1}\|\bm n\times[\![\nabla\times\bm u_h]\!]\|^2_{0,F}	
	&\le  C\|\bm f\|_{0,\frac{3}{2}}^2.
	\end{align*}
\end{theorem}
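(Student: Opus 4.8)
The plan is to invoke the discrete Sobolev imbedding inequality of \Cref{discrete-H1} with the piecewise polynomial $\bm v_h=\nabla\times\bm u_h\in[\mathcal P_k(\mathcal T_h)]^3$, and then to recognize every term on the resulting right-hand side as a quantity already bounded by \Cref{stable0} and \Cref{th:stability}. Applying both inequalities of \Cref{discrete-H1} to $\nabla\times\bm u_h$ at once delivers the two left-hand contributions $\|\nabla\times\bm u_h\|_{0,6}^2$ and $\sum_{K\in\mathcal T_h}\|\nabla(\nabla\times\bm u_h)\|_{0,K}^2$ (the latter being part of the piecewise $H^1$ norm), each dominated by
\[
C\Big[\sum_{K\in\mathcal T_h}\big(\|\nabla\times(\nabla\times\bm u_h)\|_{0,K}^2+\|\nabla\cdot(\nabla\times\bm u_h)\|_{0,K}^2\big)+\sum_{F\in\mathcal E_h}h_F^{-1}\|[\![\nabla\times\bm u_h]\!]\|_{0,F}^2\Big].
\]

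The crucial step, and the one I expect to be the main obstacle, is to reduce this right-hand side to the quantities appearing in \Cref{stable0}. The divergence term vanishes outright, since $\nabla\cdot(\nabla\times\bm u_h)=0$ pointwise on each $K$. For the jump term I would argue that the full jump may be replaced by its tangential component. Indeed, $\nabla\times\bm u_h\in[L^2(\Omega)]^3$ with $\nabla\cdot(\nabla\times\bm u_h)=0$ in the distributional sense, so $\nabla\times\bm u_h\in\bm H({\rm div};\Omega)$ and its normal trace is continuous across interior faces, i.e. $\bm n\cdot[\![\nabla\times\bm u_h]\!]=0$; on a boundary face the condition $\bm n\times\bm u_h=\bm 0$ stemming from $\bm u_h\in\bm H_0({\rm curl};\Omega)$ gives $\bm n\cdot(\nabla\times\bm u_h)=\nabla_{\partial\Omega}\cdot(\bm n\times\bm u_h)=0$, exactly as in the proof of \Cref{th:reg}. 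The orthogonal splitting $|[\![\nabla\times\bm u_h]\!]|^2=|\bm n\cdot[\![\nabla\times\bm u_h]\!]|^2+|\bm n\times[\![\nabla\times\bm u_h]\!]|^2$ then shows $\|[\![\nabla\times\bm u_h]\!]\|_{0,F}^2=\|\bm n\times[\![\nabla\times\bm u_h]\!]\|_{0,F}^2$ on every face.

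With these two reductions the bracketed term above equals $\sum_{K\in\mathcal T_h}\|\nabla\times(\nabla\times\bm u_h)\|_{0,K}^2+\sum_{F\in\mathcal E_h}h_F^{-1}\|\bm n\times[\![\nabla\times\bm u_h]\!]\|_{0,F}^2$, which for $\tau\ge 1$ is controlled by the left-hand side of \Cref{stable0} and hence by $C(\bm f,\bm u_h)$. To close, I would estimate $(\bm f,\bm u_h)\le\|\bm f\|_{0,\frac{3}{2}}\|\bm u_h\|_{0,3}$ by Hölder's inequality and invoke the bound $\|\bm u_h\|_{0,3}\le C\|\bm f\|_{0,\frac{3}{2}}$ already established in \Cref{th:stability}, yielding $(\bm f,\bm u_h)\le C\|\bm f\|_{0,\frac{3}{2}}^2$. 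Collecting the $L^6$ and piecewise $H^1$ contributions from \Cref{discrete-H1} together with the curl-curl and tangential-jump terms supplied directly by \Cref{stable0} then gives the asserted estimate.
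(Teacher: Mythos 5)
Your proposal is correct and follows essentially the same route as the paper: apply \Cref{discrete-H1} to $\bm v_h=\nabla\times\bm u_h$, kill the divergence term pointwise, replace the full jump by the tangential jump (using normal-trace continuity of $\nabla\times\bm u_h\in\bm H({\rm div};\Omega)$ in the interior and $\bm n\cdot(\nabla\times\bm u_h)=\nabla_{\partial\Omega}\cdot(\bm n\times\bm u_h)=0$ on the boundary, which you actually spell out more carefully than the paper does), and then invoke \Cref{stable0} and H\"older. The only cosmetic difference is the closing step: you cite $\|\bm u_h\|_{0,3}\le C\|\bm f\|_{0,\frac{3}{2}}$ from \Cref{th:stability}, whereas the paper re-derives that bound via \Cref{th:embed2} and absorbs it into the left-hand side; both are valid.
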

\begin{proof} Since $\bm u_h\in \bm H_0({\rm curl};\Omega)$,  there holds that $\nabla\times\bm u_h\in\bm H({\rm div};\Omega)$, and $\nabla\cdot(\nabla\times\bm u_h)=0$. Using the triangle inequality, one arrives at
	\begin{align*}
	\|[\![\nabla\times\bm u_h]\!]\|_{0,F}
	&=
	\|(\bm n\times[\![\nabla\times\bm u_h]\!])\times\bm n+(\bm n\cdot[\![\nabla\times\bm u_h]\!])\color{black}\bm n\color{black}\|_{0,F}
	\nonumber\\
	&\le
	\|(\bm n\times[\![\nabla\times\bm u_h]\!])\times\bm n\|_{0,F}
	+	\|(\bm n\cdot[\![\nabla\times\bm u_h]\!])\color{black}\bm n\color{black}\|_{0,F}
	\nonumber\\
	&\le C
	\|\bm n\times[\![\nabla\times\bm u_h]\!]\|_{0,F}.
	\end{align*}
Making use of the above estimate, \Cref{discrete-H1} with $\bm v_h=\nabla\times\bm u_h$, and \Cref{stable0}, we obtain
	\begin{align*}
	&
	\|\nabla\times\bm u_h\|_{0,6}^2+
	\sum_{K\in\mathcal{T}_h}\|\nabla(\nabla\times\bm u_h)\|_{0,K}^2
	+\sum_{F\in\mathcal{E}_h}h_F^{-1}\|[\![\nabla\times\bm u_h]\!]\|_{0,F}^2 \nonumber\\
	&\quad\le 	\sum_{K\in\mathcal{T}_h}\|\nabla(\nabla\times\bm u_h)\|_{0,K}^2
	+\sum_{F\in\mathcal{E}_h}h_F^{-1}\|[\![\nabla\times\bm u_h]\!]\|_{0,F}^2 \nonumber\\
	&\quad\le	C\left[
	\sum_{K\in\mathcal{T}_h}\left(
	\|\nabla\times(\nabla\times\bm u_h)\|^2_{0,K}
	+\|\nabla\cdot(\nabla\times\bm u_h)\|^2_{0,K}
	\right)
	+\sum_{F\in\mathcal{E}_h}h_F^{-1}\|[\![\nabla\times\bm u_h]\!]\|^2_{0,F}
	\right]\nonumber\\
	&\quad=	C\left[
	\sum_{K\in\mathcal{T}_h}
	\|\nabla\times(\nabla\times\bm u_h)\|^2_{0,K}
	+\sum_{F\in\mathcal{E}_h}h_F^{-1}\|\bm n\times[\![\nabla\times\bm u_h]\!]\|^2_{0,F}
	\right]\nonumber\\
	&\quad\le C(\bm f,\bm u_h)\nonumber\\
	&\quad \le C\|\bm f\|_{0,\frac{3}{2}}\|\bm u_h\|_{0,3}~.
	\end{align*}
Meanwhile, it holds from \Cref{th:embed2}
	\begin{align*}
	\|\bm u_h\|_{0,3}&\le C\|\nabla\times\bm u_h\|_{0}\le C\|\nabla\times\bm u_h\|_{0,3}\nonumber\\
	&\le C\left(
	\sum_{K\in\mathcal{T}_h}\|\nabla(\nabla\times\bm u_h)\|_{0,K}^2
	+\sum_{F\in\mathcal{E}_h}h_F^{-1}\|[\![\nabla\times\bm u_h]\!]\|_{0,F}^2
	\right)^{\frac{1}{2}}.
	\end{align*}
 The combination of the above two inequities completes the proof of the theorem.

\end{proof}

\section{Main error estimates\label{sec:error}}

We first give formulas for integration by parts on low regularity functions.

\begin{lemma}[{\cite[Theorem 1.4.4.6]{MR775683}}]\label{aux1} If $\phi\in H^{\frac{1}{2}+\delta}(\Omega)$ with $\delta\in (0,1/2]$, then
	$\frac{\partial\phi}{\partial x_i}\in H^{-\frac{1}{2}+\delta}(\Omega)$ for $i=1,2,3$, and there exists a constant $C>0$ such that
	\begin{align*}
	\left\| \frac{\partial\phi}{\partial x_i}\right\|_{-\frac{1}{2}+\delta}\le
	C\| \phi\|_{\frac{1}{2}+\delta}~.
	\end{align*}	
\end{lemma}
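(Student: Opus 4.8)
The statement asserts that the first-order differentiation operator $\partial_{x_i}$ maps $H^{1/2+\delta}(\Omega)$ boundedly into $H^{-1/2+\delta}(\Omega)$; since the two exponents differ by exactly one, this is the natural ``loss of one derivative'' estimate, and the plan is to deduce it by interpolation from two elementary integer-order endpoint bounds.

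First I would record the two endpoints. On the one hand, for $\phi\in H^1(\Omega)$ one has $\partial_{x_i}\phi\in L^2(\Omega)$ with $\|\partial_{x_i}\phi\|_0\le\|\phi\|_1$, so $\partial_{x_i}\colon H^1(\Omega)\to L^2(\Omega)$ is bounded. On the other hand, for $\phi\in L^2(\Omega)$ one defines $\partial_{x_i}\phi$ as the functional $v\mapsto -(\phi,\partial_{x_i}v)$ on $H^1_0(\Omega)$; the Cauchy--Schwarz inequality gives $|\langle\partial_{x_i}\phi,v\rangle|\le\|\phi\|_0\|v\|_1$, so $\partial_{x_i}\colon L^2(\Omega)\to H^{-1}(\Omega)$ is bounded, with $H^{-1}(\Omega)=(H^1_0(\Omega))'$. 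The two mappings agree on the dense subspace $H^1(\Omega)$, which is exactly what is needed to interpolate a \emph{single} operator between the couples $(H^1,L^2)$ and $(L^2,H^{-1})$.

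Next I would invoke the interpolation identities for the Sobolev scale on a Lipschitz domain. Because $\Omega$ admits a bounded Sobolev extension operator, one has $[H^1(\Omega),L^2(\Omega)]_\theta=H^{1-\theta}(\Omega)$ and, on the dual side, $[L^2(\Omega),H^{-1}(\Omega)]_\theta=H^{-\theta}(\Omega)$ for $\theta\in(0,1)$, provided $\theta\neq\tfrac12$ so as to avoid the critical Lions--Magenes exponent. Choosing $\theta=\tfrac12-\delta$, which lies in $[0,\tfrac12)$ precisely because $\delta\in(0,\tfrac12]$ and hence stays away from the critical value, turns the source space into $H^{1-\theta}(\Omega)=H^{1/2+\delta}(\Omega)$ and the target into $H^{-\theta}(\Omega)=H^{-1/2+\delta}(\Omega)$. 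Interpolating $\partial_{x_i}$ between the two endpoint bounds then yields $\|\partial_{x_i}\phi\|_{-1/2+\delta}\le C\|\phi\|_{1/2+\delta}$ with $C$ depending only on $\delta$ and $\Omega$.

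The main obstacle is the careful identification of the negative-order interpolation space with the intended $H^{-1/2+\delta}(\Omega)$: for $s<0$ several a priori distinct definitions circulate (the dual of $H^{-s}_0(\Omega)$, the restriction of $H^s(\mathbb R^3)$, and the real or complex interpolation space), and by the duality theorem $[L^2,H^{-1}]_\theta=([L^2,H^1_0]_\theta)'=(H^{\theta}(\Omega))'$ these can genuinely differ near $s=-\tfrac12$. The hypothesis $\delta>0$ is exactly what keeps the target exponent strictly above $-\tfrac12$, away from the point where $H^{1/2}$ and $H^{1/2}_{00}$ part company, so that the clean identification holds; I would therefore fix which definition of the fractional spaces is in force and verify that $\theta=\tfrac12-\delta$ avoids $\tfrac12$ throughout. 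A more self-contained alternative is to extend $\phi$ to $\tilde\phi\in H^{1/2+\delta}(\mathbb R^3)$, bound $\|\partial_{x_i}\tilde\phi\|_{-1/2+\delta}$ on $\mathbb R^3$ by the trivial Fourier-multiplier estimate $|\xi_i|(1+|\xi|^2)^{(-1/2+\delta)/2}\le(1+|\xi|^2)^{(1/2+\delta)/2}$, and restrict back to $\Omega$; this merely trades the interpolation bookkeeping for the analogous care in defining the restriction norm on negative-order spaces.
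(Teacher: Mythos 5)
Your proposal is correct, but note that the paper offers no proof of this lemma at all: it is quoted verbatim from Grisvard \cite[Theorem 1.4.4.6]{MR775683}, so the only comparison to be made is with Grisvard's argument. Of your two routes, the ``self-contained alternative'' at the end (extend $\phi$ to $\tilde\phi\in H^{1/2+\delta}(\mathbb R^3)$ by a Stein/Calder\'on extension, apply the trivial Fourier-multiplier bound $|\xi_i|(1+|\xi|^2)^{(-1/2+\delta)/2}\le(1+|\xi|^2)^{(1/2+\delta)/2}$, and restrict back) is essentially Grisvard's own proof; the point that makes the restriction step legitimate is exactly the one you isolate, namely that for $0\le \tfrac12-\delta<\tfrac12$ one has $H^{1/2-\delta}_0(\Omega)=H^{1/2-\delta}(\Omega)$, extension by zero is bounded from this space into $H^{1/2-\delta}(\mathbb R^3)$, and hence $H^{-1/2+\delta}(\Omega)=\bigl(H^{1/2-\delta}_0(\Omega)\bigr)'$ receives restrictions of $H^{-1/2+\delta}(\mathbb R^3)$ distributions with the right norm control. (This is also the convention the paper itself uses downstream, e.g.\ in the duality pairing of Theorem \ref{th-int1}.) Your primary route via interpolation of $\partial_{x_i}$ between the endpoints $H^1\to L^2$ and $L^2\to H^{-1}$ is equally valid and arguably cleaner conceptually; the only caveats are the ones you already flag (the identification $[L^2,H^{-1}]_\theta=(H^\theta_0(\Omega))'$ with $\theta=\tfrac12-\delta$ staying away from the Lions--Magenes exponent $\tfrac12$, which is guaranteed by $\delta>0$), plus the cosmetic remark that the case $\delta=\tfrac12$ corresponds to $\theta=0$ and should simply be read off from the endpoint $H^1\to L^2$ rather than from a genuine interpolation. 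Neither caveat is a gap; both routes deliver the stated bound.
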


\begin{theorem}\label{th-int1} If $\nabla\times(\nabla\times\bm u)\in [H^{\frac{1}{2}+\delta}(\Omega)]^3$ with $\delta\in (0,\frac{1}{2}]$, $\nabla\times(\nabla\times(\nabla\times(\nabla\times\bm u)))\in [L^2(\Omega)]^3$, and $\bm v_h\in \bm E_h$, then it holds
	\begin{align}\label{int}
	(\nabla\times(\nabla\times(\nabla\times(\nabla\times\bm u))),\bm v_h)
	=(\nabla\times(\nabla\times(\nabla\times\bm u)),\nabla\times\bm v_h)~,
	\end{align}
	where $(\nabla\times(\nabla\times(\nabla\times\bm u)),\nabla\times\bm v_h)$ is regarded as a duality pairing between $H^{-\frac{1}{2} + \delta}(\Omega)$ and $H_{0}^{\frac{1}{2} - \delta}(\Omega)$.
\end{theorem}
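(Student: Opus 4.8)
The plan is to reduce the claimed identity to the distributional definition of the curl tested against smooth, compactly supported fields, and then to recover the finite element test function $\bm v_h$ by a density argument in a suitable graph norm. Write $\bm w := \nabla\times(\nabla\times\bm u)$, so that by hypothesis $\bm w\in[H^{1/2+\delta}(\Omega)]^3$ and $\bm g := \nabla\times(\nabla\times\bm w) = \nabla\times(\nabla\times(\nabla\times(\nabla\times\bm u)))\in[L^2(\Omega)]^3$. Applying \Cref{aux1} componentwise to $\bm w$, the distribution $\bm\sigma := \nabla\times\bm w = \nabla\times(\nabla\times(\nabla\times\bm u))$ belongs to $[H^{-1/2+\delta}(\Omega)]^3 = ([H_0^{1/2-\delta}(\Omega)]^3)'$ with $\|\bm\sigma\|_{-1/2+\delta}\le C\|\bm w\|_{1/2+\delta}$; here I use that $1/2-\delta<1/2$, so that $H_0^{1/2-\delta}(\Omega) = H^{1/2-\delta}(\Omega)$ and the duality is exactly the one named in the statement. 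The target then reads $(\bm g,\bm v_h) = \langle\bm\sigma,\nabla\times\bm v_h\rangle$.

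First I would prove the identity for every $\bm\phi\in[\mathcal C_0^\infty(\Omega)]^3$ in place of $\bm v_h$. Since $\nabla\times\bm\sigma = \bm g\in[L^2(\Omega)]^3$, the definition of the distributional curl gives
\begin{align*}
(\bm g,\bm\phi) = \langle\nabla\times\bm\sigma,\bm\phi\rangle = \langle\bm\sigma,\nabla\times\bm\phi\rangle,
\end{align*}
where the right-hand pairing is the $[H^{-1/2+\delta}]^3$–$[H_0^{1/2-\delta}]^3$ duality because $\nabla\times\bm\phi\in[\mathcal C_0^\infty(\Omega)]^3\subset[H_0^{1/2-\delta}(\Omega)]^3$. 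Both sides are continuous with respect to the graph norm $N(\bm\phi):=\|\bm\phi\|_0+\|\nabla\times\bm\phi\|_{1/2-\delta}$: the left by the Cauchy--Schwarz inequality, the right by the operator bound $|\langle\bm\sigma,\nabla\times\bm\phi\rangle|\le\|\bm\sigma\|_{-1/2+\delta}\|\nabla\times\bm\phi\|_{1/2-\delta}$ combined with \Cref{aux1}. Hence the identity extends to the closure of $[\mathcal C_0^\infty(\Omega)]^3$ in the norm $N$.

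It therefore remains to show that $\bm v_h$ lies in that closure, i.e. to produce $\bm\phi_n\in[\mathcal C_0^\infty(\Omega)]^3$ with $\bm\phi_n\to\bm v_h$ in $[L^2(\Omega)]^3$ and $\nabla\times\bm\phi_n\to\nabla\times\bm v_h$ in $[H^{1/2-\delta}(\Omega)]^3$. The key structural facts are that $\bm v_h$ is a broken polynomial, so that both $\bm v_h$ and $\nabla\times\bm v_h$ lie in $[H^{s}(\Omega)]^3$ for every $s<1/2$ (the tangential jumps of $\nabla\times\bm v_h$ being harmless below the $1/2$–threshold), and that $\bm v_h\in\bm E_h\subset\bm H_0(\mathrm{curl};\Omega)$ so that $\bm n\times\bm v_h=\bm 0$ on $\partial\Omega$. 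Exactly as in the proof of \Cref{dense}, the latter guarantees that the extension by zero $\tilde{\bm v}_h$ of $\bm v_h$ to $\mathbb R^3$ satisfies $\nabla\times\tilde{\bm v}_h = \widetilde{\nabla\times\bm v_h}$; moreover, since $1/2-\delta<1/2$, extension by zero is bounded from $H^{1/2-\delta}(\Omega)$ into $H^{1/2-\delta}(\mathbb R^3)$, so $\tilde{\bm v}_h$ and $\nabla\times\tilde{\bm v}_h$ both lie in $[H^{1/2-\delta}(\mathbb R^3)]^3$. A standard translation-and-mollification on the Lipschitz polyhedron (localizing by a smooth partition of unity and translating transversally into $\Omega$ before mollifying) then yields the desired compactly supported approximants; the cutoff commutators produced by the partition of unity are controlled in the $H^{1/2-\delta}$ norm precisely because $\bm v_h$ itself, and not merely $\nabla\times\bm v_h$, belongs to $[H^{1/2-\delta}(\Omega)]^3$.

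The main obstacle is this last density step, whose delicacy is concentrated entirely at the fractional index: the construction works only because $\delta>0$ forces $1/2-\delta<1/2$, which is what makes extension by zero bounded, identifies $H_0^{1/2-\delta}$ with $H^{1/2-\delta}$, and keeps the tangential jumps of the broken field $\nabla\times\bm v_h$ inside $H^{1/2-\delta}$. Everything upstream, namely the passage from $\bm w$ to $\bm\sigma\in[H^{-1/2+\delta}]^3$ and the continuity of the two pairings, is a direct application of \Cref{aux1}; the only genuine analytic work is checking that the classical translate-and-mollify argument, routine in the $\bm H_0(\mathrm{curl})$ norm, survives in the stronger graph norm $N$.
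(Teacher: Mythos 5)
Your proposal is correct and follows essentially the same route as the paper's proof: both invoke \Cref{aux1} to place $\nabla\times(\nabla\times(\nabla\times\bm u))$ in $[H^{-1/2+\delta}(\Omega)]^3$, use $H_0^{1/2-\delta}(\Omega)=H^{1/2-\delta}(\Omega)$ to make sense of the duality pairing, verify the identity on $[\mathcal C_0^{\infty}(\Omega)]^3$, and conclude by density of $[\mathcal C_0^{\infty}(\Omega)]^3$ in $\bm E_h$ for a curl-graph norm at index $\tfrac12-\delta$ via extension by zero and the mollification argument of \cite[Theorem 3.29]{MR1742312}. The only cosmetic difference is that your graph norm $N$ measures $\bm\phi$ itself merely in $L^2$ rather than in $H^{1/2-\delta}$, which is a slightly weaker (and still sufficient) requirement for the density step.
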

\begin{proof}
	For all $\bm v_h\in \bm E_h$, it is easy to see that $\bm v_h\in [H^{\frac{1}{2}-\delta}(\Omega)]^3$, and $\nabla\times\bm v_h\in [H^{\frac{1}{2}-\delta}(\Omega)]^3$. Since $\nabla\times(\nabla\times\bm u)\in [H^{\frac{1}{2}+\delta}(\Omega)]^3$,
	according to \Cref{aux1},
	it holds $\nabla\times(\nabla\times(\nabla\times\bm u))\in [H^{-\frac{1}{2}+\delta}(\Omega)]^3$.

 Moreover, since  $H^{\frac{1}{2}-\delta}(\Omega)=H_0^{\frac{1}{2}-\delta}(\Omega)$ for $\delta\in (0,\frac{1}{2}]$ (\cite[Theorem
	1.4.2.4]{MR775683} or \cite[Theorem 3.40]{MR1742312}),  $(\nabla\times(\nabla\times(\nabla\times\bm u)),\nabla\times\bm v_h)$ can
	be regarded as a duality pairing between $[H^{-\frac{1}{2}+\delta}(\Omega)]^3$ and $[H_0^{\frac{1}{2}-\delta}(\Omega)]^3$.
	
Next, we are going to illustrate that $[\mathcal C_0^{\infty}(\Omega)]^3$ is dense in $\bm E_h$ with respect to the norm $\|\cdot\|_{\bm H^{\frac{1}{2}-\delta}({\rm curl};\Omega)}$.  Let $\bm v_h\in \bm E_h$, we define $\tilde{\bm v}_{h}$ on $\mathbb R^{3}$ such that it is identical to $\bm v_{h}$ in $\Omega$, and is trivial outside $\Omega$. Obviously, $\tilde{\bm v}_{h}$ belongs to $\bm H({\rm curl}, \mathbb R^{3})$.  It can also be shown that both $\tilde{\bm v}_{h}$ and $\nabla\times\tilde{\bm v}_{h}$ belong to $[H^{\frac{1}{2} - \delta}(\mathbb R^3)]^{3}$, where $\delta\in(0,\frac{1}{2}]$.
As a result, we have,  similar to the proof of part (ii) of \cite[Theorem 3.29]{MR1742312}, that there exists a sequence of vector fields in $[\mathcal C_{0}^{\infty}(\Omega)]^{3}$  which converges to $\bm v_{h}$ with respect to the norm $\|\cdot\|_{\bm H^{\frac{1}{2}-\delta}({\rm curl};\Omega)}$.
	
Finally, the equation \eqref{int} follows immediately from the standard duality argument and the fact that
	\begin{align*}
	(\nabla\times(\nabla\times(\nabla\times(\nabla\times\bm u))),\bm v)
	&=((\nabla\times(\nabla\times(\nabla\times\bm u)),\nabla\times\bm v)&\forall \,\bm v\in \mathcal [C_0^{\infty}(\Omega)]^3.
	\end{align*}
\end{proof}

\begin{theorem}\label{th-int2} For all $\phi\in H^{\frac{1}{2}+\delta}(\Omega)$ with $\delta\in (0,\frac{1}{2}]$, and $\psi_h\in \mathcal P_{\ell}(\mathcal T_h)$ with integer $\ell\ge 0$,   we have
	\begin{align}\label{33}
	\left(\frac{\partial\phi}{\partial x_i},\psi_h\right)=-
	\sum_{K\in\mathcal T_h}\left(\phi,\frac{\partial\psi_h}{\partial x_i}\right)_K
	+\sum_{K\in\mathcal T_h}\langle\phi,\psi_h n_i \rangle_{\partial K}
	\end{align}
	for $i=1,2,3$, where $	\left(\frac{\partial\phi}{\partial x_i},\psi_h\right)$ denotes  a duality pairing between $H^{-\frac{1}{2} + \delta}(\Omega)$ and $H_{0}^{\frac{1}{2} - \delta}(\Omega)$~.
\end{theorem}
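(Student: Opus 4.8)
The plan is to prove the identity first for smooth $\phi$, where it collapses to the classical elementwise integration-by-parts formula, and then to recover the general case by density in $H^{\frac12+\delta}(\Omega)$. Before that I would check that the left-hand side is meaningful. By \Cref{aux1}, $\frac{\partial\phi}{\partial x_i}\in H^{-\frac12+\delta}(\Omega)$, so it acts on $H_0^{\frac12-\delta}(\Omega)$. Since $\psi_h$ is a (generally discontinuous) piecewise polynomial, its jumps across the element faces place it in $H^s(\Omega)$ only for $s<\frac12$; in particular $\psi_h\in H^{\frac12-\delta}(\Omega)$, and, using the identification $H^{\frac12-\delta}(\Omega)=H_0^{\frac12-\delta}(\Omega)$ valid for $\delta\in(0,\frac12]$ (already invoked in the proof of \Cref{th-int1}), we get $\psi_h\in H_0^{\frac12-\delta}(\Omega)$. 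Hence the pairing $\big(\frac{\partial\phi}{\partial x_i},\psi_h\big)$ is well defined as the duality pairing between $H^{-\frac12+\delta}(\Omega)$ and $H_0^{\frac12-\delta}(\Omega)$.

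Next I would pick a sequence $\phi_n\in C^\infty(\overline\Omega)$ with $\phi_n\to\phi$ in $H^{\frac12+\delta}(\Omega)$, which exists because smooth functions are dense in $H^{\frac12+\delta}(\Omega)$ on the Lipschitz domain $\Omega$. For each $\phi_n$ the derivative $\frac{\partial\phi_n}{\partial x_i}$ lies in $L^2(\Omega)$, so the pairing on the left reduces to an ordinary $L^2$ inner product, and applying the divergence theorem on each tetrahedron $K$ gives
\begin{align*}
\left(\frac{\partial\phi_n}{\partial x_i},\psi_h\right)
=-\sum_{K\in\mathcal T_h}\left(\phi_n,\frac{\partial\psi_h}{\partial x_i}\right)_K
+\sum_{K\in\mathcal T_h}\langle\phi_n,\psi_h n_i\rangle_{\partial K},
\end{align*}
which is exactly \eqref{33} with $\phi$ replaced by $\phi_n$.

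Finally I would pass to the limit $n\to\infty$ term by term. The left-hand side converges because $\frac{\partial\phi_n}{\partial x_i}\to\frac{\partial\phi}{\partial x_i}$ in $H^{-\frac12+\delta}(\Omega)$ — this is precisely the continuity of $\phi\mapsto\partial\phi/\partial x_i$ furnished by \Cref{aux1} — while $\psi_h\in H_0^{\frac12-\delta}(\Omega)$ is fixed. The volume term on the right converges since $\phi_n\to\phi$ in $L^2(\Omega)$ and $\frac{\partial\psi_h}{\partial x_i}$ is a fixed piecewise polynomial, bounded on each $K$. The delicate term is the boundary sum: here I would use that $\frac12+\delta>\frac12$, so the trace operator $H^{\frac12+\delta}(K)\to H^{\delta}(\partial K)\hookrightarrow L^2(\partial K)$ is continuous; consequently $\phi_n\to\phi$ in $H^{\frac12+\delta}(\Omega)$ forces $\phi_n|_{\partial K}\to\phi|_{\partial K}$ in $L^2(\partial K)$ for each of the finitely many $K$, and pairing against the bounded factor $\psi_h n_i\in L^2(\partial K)$ yields $\langle\phi_n,\psi_h n_i\rangle_{\partial K}\to\langle\phi,\psi_h n_i\rangle_{\partial K}$. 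Summing over the elements gives \eqref{33}. The main obstacle is exactly this boundary term: it is what forces the extra regularity $\phi\in H^{\frac12+\delta}(\Omega)$ rather than merely $H^{1/2}(\Omega)$, so that the traces of $\phi$ on the element boundaries are genuine $L^2$ functions against which $\psi_h$ can be integrated, and it is the step in which the hypotheses on $\phi$ are actually consumed.
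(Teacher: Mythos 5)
Your proposal is correct and follows essentially the same route as the paper: approximate $\phi$ by smooth functions (the paper cites density of $\mathcal D(\bar{\Omega})$ from \cite[Theorem 3.29(i)]{MR1742312}), apply elementwise integration by parts to the approximants, and pass to the limit using \Cref{aux1} for the left-hand side, $L^2$ convergence for the volume term, and convergence of traces on the element boundaries for the face term. Your write-up is if anything slightly more explicit than the paper's about why $\psi_h\in H_0^{\frac12-\delta}(\Omega)$ and about the trace continuity, but there is no substantive difference in the argument.
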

\begin{proof} According to part (i) of \cite[Theorem 3.29]{MR1742312}, $\mathcal D(\bar{\Omega})$ is dense in $H^{\frac{1}{2}+\delta}(\Omega)$, where
	\begin{align*}
	\mathcal D(\bar{\Omega})=\{v: v=\tilde{v}|_{\Omega} \text{ for some }\tilde v\in \mathcal C^{\infty}(\mathbb R^3)\}.
	\end{align*}
	We choose $\{\phi_n\}_{n=1}^{\infty}\subset \mathcal D(\bar{\Omega})$ such that
	\begin{align}\label{tozero}
	\|\phi_n-\phi\|_{\frac{1}{2}+\delta}\to 0 \text{ as }n\to\infty.
	\end{align}
	According to \Cref{aux1}, we have
	\begin{align*}
	\left\|\frac{\partial \phi_n}{\partial x_i}-\frac{\partial\phi}{\partial x_i}\right\|_{-\frac{1}{2}+\delta}\to 0 \text{ as }n\to\infty.
	\end{align*}
	For any positive integer $n$, we have
	\begin{align*}
	\left(\frac{\partial\phi_n}{\partial x_i},\psi_h\right)=-
	\sum_{K\in\mathcal T_h}\left(\phi_n,\frac{\partial\psi_h}{\partial x_i}\right)_K
	+\sum_{K\in\mathcal T_h}\langle\phi_n,\psi_h n_i \rangle_{\partial K}.
	\end{align*}
	Note that \eqref{tozero} implies that
	\begin{align*}
	\sum_{K\in\mathcal T_h}\|\phi_n-\phi\|^2_{L^2(\partial K)}\to 0 \text{ as }n\to\infty~,
	\end{align*}
	$\psi_h\in \mathcal P_{\ell}(\mathcal T_h)$ and $\delta\in (0,\frac{1}{2}]$, we have $\psi_h\in H^{\frac{1}{2}-\delta}(\Omega)=H_0^{\frac{1}{2}-\delta}(\Omega)$, so
	$\left(\frac{\partial\phi}{\partial x_i},\psi_h\right)$ can be regarded as  a duality pairing between  $H^{-\frac{1}{2}+\delta}(\Omega)$ and $H_0^{\frac{1}{2}-\delta}(\Omega)$. Then it holds
	\begin{align*}
	\left|\left(\frac{\partial\phi}{\partial x_i},\psi_h\right)
	-\left(\frac{\partial\phi_n}{\partial x_i},\psi_h\right)\right|\le C
	\left\|\frac{\partial\phi}{\partial x_i}-\frac{\partial\phi_n}{\partial x_i}\right\|_{-\frac{1}{2}+\delta}\|\psi_h\|_{\frac{1}{2}-\delta}~,
	\end{align*}
and  \eqref{33} follows immediately.
\end{proof}
\color{black}

\begin{assumption} \label{ass} In the rest part of this paper, we will assume that the following regularities hold true for the weak solution $(\bm u,p)\in \bm H_0({\rm curl}^2;\Omega)\times H_0^1(\Omega)$ of \eqref{PDE:orignial}:
	\begin{align*}
	&\bm u\in [H^{r_{u_0}}(\Omega)]^3,\quad
	\nabla\times \bm u            \in [H^{r_{u_1}}(\Omega)]^3,\quad
	\nabla\times(\nabla\times \bm u)\in [H^{r_{u_2}}(\Omega)]^3,\quad
	p\in H^{r_p}(\Omega)~,
	\end{align*}
	where
	$r_{u_0}\in (\frac{1}{2},\infty)$,
	$r_{u_1}\in [1,\infty)$,
	$r_{u_2}\in (\frac{1}{2},\infty)$,
	and
	$r_{p}\in (\frac{3}{2},\infty)$.
	We also assume that $r_{u_0}\le r_{u_1}\le r_{u_0}+1$  in order to simplify the notations in the error analysis.
\end{assumption}

{\color{black}
We notice that the above assumption holds with $r_{u_0}=r_{u_2}=r_p=2$ when $\Omega$ is convex.
}

\begin{lemma}\label{pipi} Let \color{black} $(\bm u,p)\in \bm H_0({\rm curl}^2;\Omega)\times H_0^1(\Omega)$ be the weak solution of \eqref{PDE:orignial} and \Cref{ass} holds true, then
	\begin{subequations}\label{pi}
		\begin{align}
		\sum_{K\in\mathcal{T}_h}(\nabla\times(\nabla\times\bm u),\nabla\times(\nabla\times\bm v_h))_{K}+(\nabla p,\bm v_h)&\nonumber\\
		-\sum_{F\in\mathcal{E}_h}\langle\{\!\!\{ \nabla\times(\nabla\times\bm u)\}\!\!\},\bm n\times[\![\nabla\times\bm v_h]\!] \rangle_F&\nonumber\\
		-\sum_{F\in\mathcal{E}_h}\langle\{\!\!\{ \nabla\times(\nabla\times\bm v_h)\}\!\!\},\bm n\times[\![\nabla\times\bm u]\!] \rangle_F&\nonumber\\
		+\sum_{F\in\mathcal{E}_h}\color{black}\tau h_F^{-1}\color{black}\langle\bm n\times[\![\nabla\times\bm u]\!],\bm n\times[\![\nabla\times\bm v_h]\!] \rangle_F
		&=(\bm f,\bm v_h),\label{pi1}\\
		(\bm u,\nabla q_h)&=0\label{pi2}
		\end{align}
	are true	 for all $ (\bm v_h,q_h)\in  \bm E_h\times Q_h$.
	\end{subequations}
\end{lemma}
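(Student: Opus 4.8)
The plan is to verify that the exact weak solution satisfies the discrete equations by testing the strong form \eqref{o1} against $\bm v_h\in\bm E_h$ and then transferring curls onto $\bm v_h$ through the low-regularity integration-by-parts results \Cref{th-int1,th-int2}. Equation \eqref{pi2} is immediate: since $Q_h\subset H_0^1(\Omega)$, every $\nabla q_h$ is an admissible test function in \eqref{var2}, so $(\bm u,\nabla q_h)=0$. The work is therefore concentrated on \eqref{pi1}.

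First I would dispose of the two jump terms that involve the exact solution. Because $\bm u\in\bm H_0({\rm curl}^2;\Omega)$ we have $\nabla\times\bm u\in\bm H({\rm curl};\Omega)$, whose tangential trace is single-valued across each interior face, giving $\bm n\times[\![\nabla\times\bm u]\!]=\bm 0$ there; on a boundary face the convention $[\![\cdot]\!]=(\cdot)$ together with the boundary condition \eqref{o4} yields the same. Hence both $\sum_F\langle\{\!\!\{\nabla\times(\nabla\times\bm v_h)\}\!\!\},\bm n\times[\![\nabla\times\bm u]\!]\rangle_F$ and $\sum_F\tau h_F^{-1}\langle\bm n\times[\![\nabla\times\bm u]\!],\bm n\times[\![\nabla\times\bm v_h]\!]\rangle_F$ vanish, and it remains to match the three surviving terms.

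The core is the element-wise identity
\[
\sum_{K\in\mathcal T_h}(\nabla\times(\nabla\times\bm u),\nabla\times(\nabla\times\bm v_h))_K-\sum_{F\in\mathcal E_h}\langle\{\!\!\{\nabla\times(\nabla\times\bm u)\}\!\!\},\bm n\times[\![\nabla\times\bm v_h]\!]\rangle_F=(\nabla\times(\nabla\times(\nabla\times(\nabla\times\bm u))),\bm v_h),
\]
after which adding $(\nabla p,\bm v_h)$ and invoking \eqref{o1} (legitimately, since $\nabla\times(\nabla\times(\nabla\times(\nabla\times\bm u)))\in[L^2(\Omega)]^3$ by \Cref{th:reg}) produces the right-hand side $(\bm f,\bm v_h)$. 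To prove the identity I would apply \Cref{th-int1}, valid since $\nabla\times(\nabla\times\bm u)\in[H^{r_{u_2}}(\Omega)]^3\hookrightarrow[H^{\frac12+\delta}(\Omega)]^3$ for a suitable $\delta\in(0,\tfrac12]$, to rewrite the term as the duality pairing $(\nabla\times(\nabla\times(\nabla\times\bm u)),\nabla\times\bm v_h)$. Writing $\bm w:=\nabla\times(\nabla\times\bm u)$ and expanding $\nabla\times\bm w$ componentwise as $\sum_{j,k}\epsilon_{ijk}\partial_j w_k$, I would then apply \Cref{th-int2} to each scalar pairing $(\partial_j w_k,(\nabla\times\bm v_h)_i)$, which is admissible because $(\nabla\times\bm v_h)_i\in\mathcal P_k(\mathcal T_h)$. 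Summing against the Levi-Civita symbol reassembles the volume contributions into $\sum_K(\bm w,\nabla\times(\nabla\times\bm v_h))_K$ and the element-boundary contributions into $\sum_K\langle\bm w,\cdot\rangle_{\partial K}$; collapsing the latter across shared faces, using the single-valued trace of $\bm w\in[H^{\frac12+\delta}(\Omega)]^3$ so that $\bm w|_F=\{\!\!\{\bm w\}\!\!\}$, produces exactly $-\sum_F\langle\{\!\!\{\bm w\}\!\!\},\bm n\times[\![\nabla\times\bm v_h]\!]\rangle_F$, with boundary faces handled by the stated conventions.

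The main obstacle is the low regularity of $\bm w=\nabla\times(\nabla\times\bm u)$: since $\bm w\in[H^{\frac12+\delta}(\Omega)]^3$ only, $\nabla\times\bm w$ is a genuine distribution in $[H^{-\frac12+\delta}(\Omega)]^3$, so the two integrations by parts are not classical and must be read as duality pairings; this is precisely what \Cref{th-int1,th-int2} are designed to supply, and I would invoke them rather than reprove their density arguments. A secondary source of care is the bookkeeping of face orientations and the average/jump conventions when assembling $\sum_K\langle\cdot\rangle_{\partial K}$ into $\sum_F\langle\cdot\rangle_F$; here the identity $(\bm a\times\bm b)\cdot\bm n=-(\bm n\times\bm a)\cdot\bm b$ and the continuity of $\{\!\!\{\bm w\}\!\!\}$ are what make the surviving face contribution land on the average of $\nabla\times(\nabla\times\bm u)$ paired with the tangential jump of $\nabla\times\bm v_h$, matching \eqref{pi1} exactly.
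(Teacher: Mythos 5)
Your proposal is correct and follows essentially the same route as the paper's proof: test the strong form (justified by \Cref{th:reg}) against $\bm v_h$, apply \Cref{th-int1} and then \Cref{th-int2} to move curls onto $\bm v_h$ element-by-element, and kill the jump terms in $\nabla\times\bm u$ using its single-valued tangential trace together with the boundary condition \eqref{o4}. The only difference is presentational — you spell out the componentwise Levi-Civita bookkeeping for \Cref{th-int2} that the paper leaves implicit.
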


\begin{proof}  For any $ \bm v_h\in \bm E_h$, \color{black} since the weak solution $(\bm u,p)\in \bm H_0({\rm curl}^2;\Omega)\times H_0^1(\Omega)$ satisfies \eqref{o1} in the distribution sense,
by \Cref{th:reg}, we have  $\nabla\times(\nabla\times(\nabla\times(\nabla\times\bm u))) \in [L^2(\Omega)]^3$, which further leads to, together with the fact that $ \nabla p, \bm f\in [L^2(\Omega)]^3$,
	\begin{align*}
	(\nabla\times(\nabla\times(\nabla\times(\nabla\times\bm u))),\bm v_h)+(\nabla p,\bm v_h)=(\bm f,\bm v_h).
	\end{align*}
	From \Cref{th-int1}\color{black}, we have
	\begin{align*}
	(\nabla\times(\nabla\times(\nabla\times\bm u)),\nabla\times\bm v_h)+(\nabla p,\bm v_h)=(\bm f,\bm v_h).
	\end{align*}
	\color{black}
	According to \Cref{th-int2}\color{black}, we have

	\begin{align*}
	&\sum_{K\in\mathcal{T}_h}(\nabla\times(\nabla\times\bm u),\nabla\times(\nabla\times\bm v_h))_K\nonumber\\
	&\qquad-\sum_{F\in\mathcal{E}_h}\langle
	\{\!\!\{\nabla\times(\nabla\times\bm u)\}\!\!\},\bm n\times[\![\nabla\times\bm v_h]\!]
	\rangle_F
	+(\nabla p,\bm v_h)=(\bm f,\bm v_h),
	\end{align*}
	where we have utilized  the fact  $\nabla\times(\nabla\times\bm u)=\{\!\!\{\nabla\times(\nabla\times\bm u)\}\!\!\}$   because of  $\nabla\times(\nabla\times\bm u)\in [H^{s_{u_2}}(\Omega)]^3$ with $s_{u_2}>\frac{1}{2}$  .  Moreover, since $\color{black}\bm n\times[\![\nabla\times\bm u]\!]=\bm 0$ because of $\nabla\times\bm u\in[H^{s_{u_1}}(\Omega)]^3$ with $s_{u_1}\ge 1$ \color{black} and $\bm n\times(\nabla\times\bm u)|_{\partial\Omega}=\bm 0$,  \eqref{pi1} follows immediately. Finally, \eqref{pi2}  can be obtained due to the fact $Q_h\in H_0^1(\Omega)$.
\end{proof}

\begin{theorem}\label{error estimates} 	\color{black} Let $(\bm u,p)\in \bm H_0({\rm curl}^2;\Omega)\times H_0^1(\Omega)$ be the weak solution of \eqref{PDE:orignial}  and \Cref{ass} holds true; let $(\bm u_h, p_h)\in \bm E_h\times Q_h$ be the solution of \eqref{FEM}. \color{black} Then we have the following error estimates
	\begin{align*}
	&\|\bm u-\bm u_h\|_0+\|\nabla\times(\bm u-\bm u_h)\|_0\nonumber\\
	&\quad+\left(\sum_{K\in\mathcal{T}_h}\|\nabla\times(\nabla\times (\bm u-\bm u_h))\|^2_{0,K}
	+\sum_{F\in\mathcal{E}_h}\color{black}\tau h_F^{-1}\color{black}\|\bm n\times[\![\nabla\times (\bm u-\bm u_h)]\!]\|^2_{0,F}\right)^{\frac{1}{2}}
	\nonumber\\
	&\quad\quad	
	\le C
	(
	h^{s_{u_0}}\|\bm u\|_{s_{u_0}}
	+	h^{s_{u_1}-1}\|\nabla\times\bm u\|_{s_{u_1}}\color{black}
	+   h^{s_{u_2}}\|\nabla\times(\nabla\times\bm u)\|_{s_{u_2}}
	+	h^{s_p-1}\|p\|_{s_p}
	),\\
	& \|\nabla (p- p_h)\|_0\le Ch^{s_p-1}\|p\|_{s_p},
	\end{align*}
	and the discrete  $H^1$ norm error estimate for $\nabla\times(\bm u-\bm u_h)$
	\begin{align*}
	&\left(\sum_{K\in\mathcal{T}_h}\|\nabla(\nabla\times (\bm u-\bm u_h))\|^2_{0,K}
	+\sum_{F\in\mathcal{E}_h}h_F^{-1}\|[\![\nabla\times (\bm u-\bm u_h)]\!]\|^2_{0,F}\right)^{\frac{1}{2}}
	\nonumber\\
	&\quad\quad	
	\le C
	(
	h^{s_{u_0}}\|\bm u\|_{s_{u_0}}
	+	h^{s_{u_1}-1}\|\nabla\times\bm u\|_{s_{u_1}}
	\color{black}
	+   h^{s_{u_2}}\|\nabla\times(\nabla\times\bm u)\|_{s_{u_2}}
	+	h^{s_p-1}\|p\|_{s_p}
	),
	\end{align*}
	where
	$s_{u_0}\in(\frac{1}{2},\min(r_{u_0},\color{black}k+2)]$,
	$s_{u_1}\in [1,\min(r_{u_1},k+1)]$,
	$s_{u_2}\in [1,\min(r_{u_2},k+1)]$, and
	$s_{p}  \in(\frac{3}{2},\min(r_{p},k+3)]$.
\end{theorem}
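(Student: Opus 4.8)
The plan is to run an energy argument resting on the consistency identity of \Cref{pipi} together with the coercivity and inf--sup properties already established for the discrete form, and to treat the pressure separately because it decouples. Write $a_h(\bm w,\bm v)$ for the symmetric bilinear form appearing on the left of \eqref{fem01} (the curl--curl volume term, the two consistency terms and the stabilization), and $b(\bm v,q):=(\bm v,\nabla q)$. Subtracting \eqref{fem01}--\eqref{fem02} from \Cref{pipi}, and using that $\bm n\times[\![\nabla\times\bm u]\!]=\bm 0$ (guaranteed by $\nabla\times\bm u\in[H^{s_{u_1}}(\Omega)]^3$ with $s_{u_1}\ge1$) makes the exact solution genuinely consistent, I obtain the Galerkin orthogonality $a_h(\bm u-\bm u_h,\bm v_h)+b(\bm v_h,p-p_h)=0$ and $b(\bm u-\bm u_h,q_h)=0$ for all $(\bm v_h,q_h)\in\bm E_h\times Q_h$. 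Here $\interleave\cdot\interleave$ denotes the mesh-dependent norm from the existence proof, and the first two terms of the stated velocity estimate together constitute $\interleave\bm u-\bm u_h\interleave$.

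I would dispose of the pressure first. Every term of $a_h$ carries a $\nabla\times$ of its second argument, so $a_h(\bm w,\nabla\phi_h)=0$ for any $\bm w$ and any $\phi_h\in Q_h$ (note $\nabla\phi_h\in\bm E_h$ since it is curl-free with vanishing tangential trace). Testing the orthogonality with $\bm v_h=\nabla(\Pi_h^Q p-p_h)\in\bm E_h$ therefore leaves $(\nabla(\Pi_h^Q p-p_h),\nabla(p-p_h))=0$, whence $\|\nabla(\Pi_h^Q p-p_h)\|_0\le\|\nabla(p-\Pi_h^Q p)\|_0$ by Cauchy--Schwarz; the triangle inequality and \eqref{app-p} give $\|\nabla(p-p_h)\|_0\le Ch^{s_p-1}\|p\|_{s_p}$, which is exactly the stated bound and is independent of the velocity error. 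For the velocity I would split $\bm u-\bm u_h=\bm e_I+\bm\xi_h$ with $\bm e_I:=\bm u-\bm\Pi_h^{\bm E}\bm u$ and $\bm\xi_h:=\bm\Pi_h^{\bm E}\bm u-\bm u_h\in\bm E_h$. By \eqref{or-p} we have $b(\bm\xi_h,q_h)=0$, so $\bm\xi_h$ is discretely divergence-free and \eqref{ellip1}--\eqref{ellip2} yield $\interleave\bm\xi_h\interleave^2\le C\,a_h(\bm\xi_h,\bm\xi_h)$. Orthogonality rewrites the right-hand side as $-b(\bm\xi_h,p-p_h)-a_h(\bm e_I,\bm\xi_h)$; the pressure term equals $(\bm\xi_h,\nabla(p-\Pi_h^Q p))$ because $(\bm\xi_h,\nabla q_h)=0$, and is bounded by $Ch^{s_p-1}\|p\|_{s_p}\interleave\bm\xi_h\interleave$ via \eqref{app-p}.

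To control $a_h(\bm e_I,\bm\xi_h)$ I would establish a continuity estimate $|a_h(\bm w,\bm v)|\le C\interleave\bm w\interleave_*\interleave\bm v\interleave_*$ in the augmented norm $\interleave\bm v\interleave_*^2:=\interleave\bm v\interleave^2+\sum_{F\in\mathcal E_h}h_F\|\{\!\!\{\nabla\times(\nabla\times\bm v)\}\!\!\}\|_{0,F}^2$, the extra face term being exactly what is needed to absorb the consistency terms by Cauchy--Schwarz; on $\bm E_h$ an inverse trace inequality gives $\interleave\bm\xi_h\interleave_*\le C\interleave\bm\xi_h\interleave$. Dividing out $\interleave\bm\xi_h\interleave$ and applying the triangle inequality reduces the velocity energy estimate to bounding $\interleave\bm e_I\interleave_*$.

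The crux, and the step I expect to be hardest, is the interpolation bound for $\interleave\bm e_I\interleave_*$ under the low regularity of \Cref{ass}. Since gradients are annihilated by the curl, $\nabla\times\bm\Pi_h^{\bm E}\bm u=\nabla\times\bm\Pi_{h,k+1}^{\rm curl}\bm u$, and the commuting property of the second-kind interpolation identifies this with an $\bm H({\rm div})$-conforming interpolant of $\nabla\times\bm u$; this lets me express $\nabla\times\bm e_I$ and $\nabla\times(\nabla\times\bm e_I)$ as interpolation errors of $\nabla\times\bm u$. The $L^2$ term is handled by \eqref{app-u1} and \eqref{app-u4}, the curl term by \eqref{app-u2}, and the elementwise curl--curl and the tangential-jump face term by the $H^1$-seminorm interpolation estimate (yielding the power $h^{s_{u_1}-1}\|\nabla\times\bm u\|_{s_{u_1}}$), where the hypothesis $r_{u_0}\le r_{u_1}\le r_{u_0}+1$ is precisely what lets me merge the $L^2$ and curl contributions into the stated rates. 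The genuinely delicate term is the average face contribution $\sum_F h_F\|\{\!\!\{\nabla\times(\nabla\times\bm e_I)\}\!\!\}\|_{0,F}^2$: because $\nabla\times(\nabla\times\bm u)$ lies only in $[H^{s_{u_2}}(\Omega)]^3$ with $s_{u_2}>\tfrac12$, I cannot use a full (first-order) trace inequality and must instead insert a local polynomial best-approximation of $\nabla\times(\nabla\times\bm u)$ and invoke the fractional scaled trace inequality $\|w\|_{0,F}^2\le C(h_K^{-1}\|w\|_{0,K}^2+h_K^{2\theta-1}|w|_{\theta,K}^2)$ with $\theta=s_{u_2}\in(\tfrac12,1]$, which produces the term $h^{s_{u_2}}\|\nabla\times(\nabla\times\bm u)\|_{s_{u_2}}$. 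Finally, for the discrete $H^1$ estimate of $\nabla\times(\bm u-\bm u_h)$ I would split again, apply \Cref{discrete-H1} to the piecewise polynomial $\nabla\times\bm\xi_h$ (whose divergence vanishes and whose normal jumps vanish because $\bm\xi_h\in\bm H_0({\rm curl};\Omega)$, exactly as in the proof of \Cref{th:stability2}) to bound it by $\interleave\bm\xi_h\interleave$, and control $\nabla\times\bm e_I$ by the same $\bm H({\rm div})$-interpolation estimates; combining these with the energy bound closes the argument.
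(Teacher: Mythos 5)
Your proposal is correct and follows essentially the same route as the paper: Galerkin orthogonality from \Cref{pipi}, the splitting via $\bm\Pi_h^{\bm E}\bm u$ and $\Pi_h^Q p$ with \eqref{or-p} making the discrete error divergence-free so that coercivity applies, insertion of local $L^2$ projections combined with inverse/fractional trace inequalities to handle the consistency face terms under $s_{u_2}>\tfrac12$, the decoupled pressure bound by testing with $\nabla e_h^p$, and \Cref{th:embed2}/\Cref{discrete-H1} for the $L^2$, curl and broken $H^1$ bounds. The only cosmetic difference is that you package the consistency-term estimates into an abstract continuity bound in an augmented norm, whereas the paper estimates the five remainder terms $R_1,\dots,R_5$ individually; the underlying inequalities are identical.
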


\begin{proof}
	We subtract \eqref{FEM} from \eqref{pi} to get: for all $(\bm v_h,q_h)\in \bm E_h\times Q_h$, there hold
	\begin{subequations}\label{error1}
		\begin{align}
		\sum_{K\in\mathcal{T}_h}(\nabla\times(\nabla\times(\bm u-\bm u_h)),\nabla\times(\nabla\times\bm v_h))_{K}+(\nabla (p-p_h),\bm v_h)&\nonumber\\
		-\sum_{F\in\mathcal{E}_h}\langle\{\!\!\{ \nabla\times(\nabla\times(\bm u-\bm u_h))\}\!\!\},\bm n\times[\![\nabla\times\bm v_h]\!] \rangle_F&\nonumber\\
		-\sum_{F\in\mathcal{E}_h}\langle\{\!\!\{ \nabla\times(\nabla\times\bm v_h)\}\!\!\},\bm n\times[\![\nabla\times(\bm u-\bm u_h)]\!] \rangle_F&\nonumber\\
		+\sum_{F\in\mathcal{E}_h}\color{black}\tau h_F^{-1}\color{black}\langle\bm n\times[\![\nabla\times(\bm u-\bm u_h)]\!],\bm n\times[\![\nabla\times\bm v_h]\!] \rangle_F
		&=0,\label{error11}\\
		(\bm u-\bm u_h,\nabla q_h)&=0.\label{error12}
		\end{align}
	\end{subequations}
	To simplify the notation, we define
	\begin{align*}
	e_h^{\bm u}:=\bm{\Pi}_h^{\bm E}\bm u-\bm u_h,\qquad
	e_h^{p}:={\Pi}_h^{Q}p-p_h.
	\end{align*}
	By taking $\bm v_h=e_h^{\bm u}\in \bm E_h$ in \eqref{error11} and $q_h=e_h^p\in Q_h$ in \eqref{error12}, we can get
	\begin{subequations}\label{error2}
		\begin{align}
		\sum_{K\in\mathcal{T}_h}(\nabla\times(\nabla\times(\bm u-\bm u_h)),\nabla\times(\nabla\times e_h^{\bm u}))_{K}+(\nabla (p-p_h), e_h^{\bm u})&\nonumber\\
		-\sum_{F\in\mathcal{E}_h}\langle\{\!\!\{ \nabla\times(\nabla\times(\bm u-\bm u_h))\}\!\!\},\bm n\times[\![\nabla\times e_h^{\bm u}]\!] \rangle_F&\nonumber\\
		-\sum_{F\in\mathcal{E}_h}\langle\{\!\!\{ \nabla\times(\nabla\times e_h^{\bm u})\}\!\!\},\bm n\times[\![\nabla\times(\bm u-\bm u_h)]\!] \rangle_F&\nonumber\\
		+\sum_{F\in\mathcal{E}_h}\color{black}\tau h_F^{-1}\color{black}\langle\bm n\times[\![\nabla\times(\bm u-\bm u_h)]\!],\bm n\times[\![\nabla\times e_h^{\bm u}]\!] \rangle_F
		&=0,\label{error21}\\
		(\bm u-\bm u_h,\nabla e_h^p)&=0.\label{error22}
		\end{align}
	\end{subequations}
Reformulating  \eqref{error2} and noticing that $(\bm{\Pi}_h^{\bm E}\bm u-\bm u,\nabla e_h^p)=0$ from \eqref{or-p}, we obtain that
	\begin{subequations}\label{error3}
		\begin{align}
		&\sum_{K\in\mathcal{T}_h}(\nabla\times(\nabla\times e_h^{\bm u}),\nabla\times(\nabla\times e_h^{\bm u}))_{K}+(\nabla e_h^p, e_h^{\bm u})\nonumber\\
		&\quad-2\sum_{F\in\mathcal{E}_h}\langle\{\!\!\{ \nabla\times(\nabla\times e_h^{\bm u})\}\!\!\},\bm n\times[\![\nabla\times e_h^{\bm u}]\!] \rangle_F\nonumber\\
		&\quad+\sum_{F\in\mathcal{E}_h}\color{black}\tau h_F^{-1}\color{black}\langle\bm n\times[\![\nabla\times e_h^{\bm u}]\!],\bm n\times[\![\nabla\times e_h^{\bm u}]\!] \rangle_F
		\nonumber\\
		&\quad\quad=	\sum_{K\in\mathcal{T}_h}(\nabla\times(\nabla\times(\bm{\Pi}_h^{\bm E}\bm u-\bm u)),\nabla\times(\nabla\times e_h^{\bm u}))_{K}+(\nabla (\Pi_h^Qp-p), e_h^{\bm u})\nonumber\\
		&\quad\quad\quad-\sum_{F\in\mathcal{E}_h}\langle\{\!\!\{ \nabla\times(\nabla\times(\bm{\Pi}_h^{\bm E}\bm u-\bm u))\}\!\!\},\bm n\times[\![\nabla\times e_h^{\bm u}]\!] \rangle_F\nonumber\\
		&\quad\quad\quad-\sum_{F\in\mathcal{E}_h}\langle\{\!\!\{ \nabla\times(\nabla\times e_h^{\bm u})\}\!\!\},\bm n\times[\![\nabla\times(\bm{\Pi}_h^{\bm E}\bm u-\bm u)]\!] \rangle_F\nonumber\\
		&\quad\quad\quad+\sum_{F\in\mathcal{E}_h}\color{black}\tau h_F^{-1}\color{black}\langle\bm n\times[\![\nabla\times(\bm{\Pi}_h^{\bm E}\bm u-\bm u)]\!],\bm n\times[\![\nabla\times e_h^{\bm u}]\!] \rangle_F
		,\label{error31}\\
		&(e_h^{\bm u},\nabla e_h^p)=0.\label{error32}
		\end{align}
	\end{subequations}
	Utilizing \eqref{error3} and arguments similar to those in the proof of \Cref{stable0}, we arrive at
	\begin{align}\label{R0}
	&\frac{1}{2}\sum_{K\in\mathcal{T}_h}\|\nabla\times(\nabla\times e_h^{\bm u})\|^2_{0,K}
	+\frac{1}{2}\sum_{F\in\mathcal{E}_h}\color{black}\tau h_F^{-1}\color{black}\|\bm n\times[\![\nabla\times e_h^{\bm u}]\!]\|^2_{0,F}\nonumber\\
	&\qquad\le	\sum_{K\in\mathcal{T}_h}(\nabla\times(\nabla\times(\bm{\Pi}_h^{\bm E}\bm u-\bm u)),\nabla\times(\nabla\times e_h^{\bm u}))_{K}+(\nabla (\Pi_h^Qp-p), e_h^{\bm u})\nonumber\\
	&\qquad\quad-\sum_{F\in\mathcal{E}_h}\langle\{\!\!\{ \nabla\times(\nabla\times(\bm{\Pi}_h^{\bm E}\bm u-\bm u))\}\!\!\},\bm n\times[\![\nabla\times e_h^{\bm u}]\!] \rangle_F\nonumber\\
	&\qquad\quad-\sum_{F\in\mathcal{E}_h}\langle\{\!\!\{ \nabla\times(\nabla\times e_h^{\bm u})\}\!\!\},\bm n\times[\![\nabla\times(\bm{\Pi}_h^{\bm E}\bm u-\bm u)]\!] \rangle_F\nonumber\\
	&\qquad\quad+\sum_{F\in\mathcal{E}_h}\color{black}\tau h_F^{-1}\color{black}\langle\bm n\times[\![\nabla\times(\bm{\Pi}_h^{\bm E}\bm u-\bm u)]\!],\bm n\times[\![\nabla\times e_h^{\bm u}]\!] \rangle_F\nonumber\\
	&\qquad=:R_1+R_2+R_3+R_4+R_5.
	\end{align}
	Now, we make estimates $\{R_i\}_{i=1}^5$ individually. Let $\bm{\Pi}_{h,k}$ be the $L^2$-projection from $L^2(\Omega)$ to $[\mathcal{P}_k(\mathcal{T}_h)]^3$. Then,  by the triangle inequality, the inverse inequality and the estimate \eqref{app-u2},
	there holds
	\begin{align}\label{est-nabla}
	&\|\nabla\times(\nabla\times\bm{\Pi}_{h,k+1}^{\text{curl}}\bm u)-\nabla\times(\nabla\times\bm u)\|_{0,K}\nonumber\\
	&\quad\le
	\|\nabla\times(\nabla\times\bm{\Pi}_{h,k+1}^{\text{curl}}\bm u-\bm{\Pi}_{h,k}(\nabla\times\bm u))\|_{0,K}
	+
	\|\nabla\times(\bm{\Pi}_{h,k}(\nabla\times\bm u)-\nabla\times\bm u)\|_{0,K}\nonumber\\
	&\quad\le
	Ch_K^{-1}\|\nabla\times\bm{\Pi}_{h,k+1}^{\text{curl}}\bm u-\bm{\Pi}_{h,k}(\nabla\times\bm u)\|_{0,K}
	+
	|\bm{\Pi}_{h,k}(\nabla\times\bm u)-\nabla\times\bm u|_{1,K}\nonumber\\
	&\quad\le Ch_K^{s_{u_1}-1}\|\nabla\times\bm u\|_{s_{u_1},K}
	.\nonumber\\
	\end{align}
	From \eqref{def:piE} and the above estimate, we get
	\begin{align}\label{R1}
	|R_1| &=\left|\sum_{K\in\mathcal{T}_h}(\nabla\times(\nabla\times\bm{\Pi}_{h,k+1}^{\text{curl}}\bm u)-\nabla\times(\nabla\times\bm u),\nabla\times
	(\nabla\times e_h^{\bm u}   ))_K\right|\nonumber\\
	&\le Ch^{s_{u_1}-1}\|\nabla\times\bm u\|_{s_{u_1}}\left(\sum_{K\in\mathcal{T}_h}\|\nabla\times (\nabla\times e_h^{\bm u})\|_{0,K}\right)^{\frac{1}{2}}.
	\end{align}
	We use the approximation property for $\Pi_h^Q$ in \eqref{app-p} and \Cref{th:embed2} to obtain
	\begin{align}\label{R2}
	R_2&\le Ch^{s_p-1}\|p\|_{s_p}\|e_h^{\bm u}\|_0 \nonumber\\
	&\le Ch^{s_p-1}\|p\|_{s_p}\|e_h^{\bm u}\|_{0,3} \nonumber\\
	&\le Ch^{s_p-1}\|p\|_{s_p}\|\nabla\times e_h^{\bm u}\|_{0} \nonumber\\
	&\le Ch^{s_p-1}\|p\|_{s_p}\|\nabla\times e_h^{\bm u}\|_{0,3} \nonumber\\
	&\le Ch^{s_p-1}\|p\|_{s_p}
	\left(
	\sum_{K\in\mathcal{T}_h}\|\nabla\times(\nabla\times e_h^{\bm u})\|^2_{0,K}
	+\sum_{F\in\mathcal{E}_h}\color{black}\tau h_F^{-1}\color{black}\|\bm n\times[\![\nabla\times e_h^{\bm u}]\!]\|^2_{0,F}
	\right)^{\frac{1}{2}}
	.
	\end{align}
	Using the triangle inequality, the inverse inequality, and the approximation property of $\bm{\Pi}_{h,k+1}^{\text{curl}}$ in \eqref{app-u2} leads to
	\begin{align}
	&\sum_{F\in\mathcal{E}_h}h_F\|\{\!\!\{ \nabla\times(\nabla\times(\bm{\Pi}_{h,k+1}^{\text{curl}}\bm u-\bm u))\}\!\!\}\|_{0,F}^2\nonumber\\
	&\quad\le 2\sum_{F\in\mathcal{E}_h}h_F\|\{\!\!\{ \nabla\times(\nabla\times\bm{\Pi}_{h,k+1}^{\text{curl}}\bm u)-\bm{\Pi}_{h,k}\nabla\times(\nabla\times\bm u)\}\!\!\}\|_{0,F}^2
	\nonumber\\
	&\qquad+2\sum_{F\in\mathcal{E}_h}h_F\|\{\!\!\{ \bm{\Pi}_{h,k}\nabla\times(\nabla\times\bm u)\}-\nabla\times(\nabla\times\bm u)\}\!\!\}\|_{0,F}^2 \nonumber\\
	&\quad\le C\sum_{K\in\mathcal{T}_h}\|\ \nabla\times(\nabla\times\bm{\Pi}_{h,k+1}^{\text{curl}}\bm u)-\bm{\Pi}_{h,k}\nabla\times(\nabla\times\bm u)\|_{0,K}^2
	\nonumber\\
	&\qquad+Ch^{2s_{u_2}}\|\nabla\times(\nabla\times\bm u)\|_{s_{u_2}}^2 \nonumber\\
	&\quad\le C\sum_{K\in\mathcal{T}_h}\|\ \nabla\times(\nabla\times\bm{\Pi}_{h,k+1}^{\text{curl}}\bm u)-\nabla\times(\nabla\times\bm u)\|_{0,K}^2\nonumber\\
	&\qquad
	+C\sum_{K\in\mathcal{T}_h}\|\ \nabla\times(\nabla\times\bm u)-\bm{\Pi}_{h,k}\nabla\times(\nabla\times\bm u)\|_{0,K}^2
	+Ch^{2s_{u_2}}\|\nabla\times(\nabla\times\bm u)\|_{s_{u_2}}^2 \nonumber\\
	&\quad\le C\left(h^{s_{u_1}-1}\|\nabla\times\bm u\|_{s_{u_1}}+h^{s_{u_2}}\|\nabla\times(\nabla\times\bm u)\|_{s_{u_2}}\right)^2.\nonumber\\
	\end{align}
	We use the above estimate to get
	\begin{align}\label{R3}
	|R_3|&=\left| \sum_{F\in\mathcal{E}_h}\langle\{\!\!\{ \nabla\times\nabla\times(\bm{\Pi}_{h,k+1}^{\text{curl}}\bm u-\bm u)\}\!\!\},\bm n\times[\![\nabla\times e_h^{\bm u}]\!] \rangle_F\right| \nonumber\\
	&\le C\left(h^{s_{u_1}-1}\|\nabla\times\bm u\|_{s_{u_1}}
	+h^{s_{u_2}}\|\nabla\times\nabla\times\bm u\|_{s_{u_2}}
	\right)\nonumber\\
	&\quad\times
	\left(
	\sum_{F\in\mathcal{E}_h}\color{black}\tau h_F^{-1}\color{black}\|\bm n\times[\![\nabla\times e_h^{\bm u}]\!]\|^2_{0,F}
	\right)^{\frac{1}{2}}.
	\end{align}
	Again, by the triangle inequality, the approximation property of $\bm{\Pi}_{h,k+1}^{\text{curl}}$ in \eqref{app-u2}, we have
	\begin{align*}
	&\sum_{F\in\mathcal{E}_h}h_F^{-1}\|\bm n\times[\![\nabla\times(\bm{\Pi}_{h,k+1}^{\text{curl}}\bm u-\bm u)]\!]\|_{0,F}^2\nonumber\\
	&\quad\le
	2\sum_{F\in\mathcal{E}_h}h_F^{-1}\|\bm n\times[\![\nabla\times\bm{\Pi}_{h,k+1}^{\text{curl}}\bm u-\bm{\Pi}_{h,k}\nabla\times\bm u]\!]\|_{0,F}^2\nonumber\\
	&\qquad
	+2\sum_{F\in\mathcal{E}_h}h_F^{-1}\|\bm n\times[\![\bm{\Pi}_{h,k}\nabla\times\bm u-\nabla\times\bm u]\!]\|_{0,F}^2\nonumber\\
	&\quad\le C	\sum_{K\in\mathcal{T}_h}h_K^{-2}\|\nabla\times\bm{\Pi}_{h,k+1}^{\text{curl}}\bm u-\bm{\Pi}_{h,k}\nabla\times\bm u\|_{0,K}^2+Ch^{2(s_{u_1}-1)} \|\nabla\times\bm u\|_{s_{u_1}}^2\nonumber\\
	&\quad\le C	\sum_{K\in\mathcal{T}_h}h_K^{-2}\|\nabla\times\bm{\Pi}_{h,k+1}^{\text{curl}}\bm u-\nabla\times\bm u\|_{0,K}^2\nonumber\\
	&\qquad
	+C	\sum_{K\in\mathcal{T}_h}h_K^{-2}\|\nabla\times\bm u-\bm{\Pi}_{h,k}\nabla\times\bm u\|_{0,K}^2+Ch^{2(s_{u_1}-1)} \|\nabla\times\bm u\|_{s_{u_1}}^2\nonumber\\
	&\quad\le Ch^{2(\color{black}s_{u_1}-1)}\|\nabla\times\bm u\|_{s_{u_1}}^2.
	\end{align*}
	Using the above estimate and the inverse inequality, we arrive at
	\begin{align}\label{R4}
	|R_4|&=\left|	\sum_{F\in\mathcal{E}_h}\langle\{\!\!\{ \nabla\times(\nabla\times e_h^{\bm u})\}\!\!\},\bm n\times[\![\nabla\times(\bm{\Pi}_{h,k+1}^{\text{curl}}\bm u-\bm u)]\!] \rangle_F\right| \nonumber\\
	&\le Ch^{s_{u_1}-1}\|\nabla\times\bm u\|_{s_{u_1}}\left(\sum_{K\in\mathcal{T}_h}\|\nabla\times (\nabla\times e_h^{\bm u})\|_0\right)^{\frac{1}{2}},\\
	|R_5|&=\left|\sum_{F\in\mathcal{E}_h}\color{black}\tau h_F^{-1}\color{black}\langle\bm n\times[\![\nabla\times(\bm{\Pi}_{h,k+1}^{\rm curl}\bm u-\bm u)]\!],\bm n\times[\![\nabla\times e_h^{\bm u}]\!] \rangle_F\right| \nonumber\\
	&\le Ch^{s_{u_1}-1}\|\nabla\times\bm u\|_{s_{u_1}}\left(
	\sum_{F\in\mathcal{E}_h}\tau h_F^{-1}\|\bm n\times[\![\nabla\times e_h^{\bm u}]\!]\|^2_{0,F}
	\right)^{\frac{1}{2}}.
	\end{align}
From \eqref{R0}, \eqref{R1}, \eqref{R2}, \eqref{R3} and \eqref{R4}, it gives us that
	\begin{align*}
	&\sum_{K\in\mathcal{T}_h}\|\nabla\times(\nabla\times e_h^{\bm u})\|^2_{0,K}
	+\sum_{F\in\mathcal{E}_h}\color{black}\tau h_F^{-1}\color{black}\|\bm n\times[\![\nabla\times e_h^{\bm u}]\!]\|^2_{0,F}\nonumber\\
	&\qquad\le C
	(
	h^{s_{u_1}-1}\|\nabla\times\bm u\|_{s_{u_1}}
	+h^{s_{u_2}}\|\nabla\times(\nabla\times\bm u)\|_{s_{u_2}}
	+	h^{s_p-1}\|p\|_{s_p}
	).
	\end{align*}
The result for the estimates of $\bm u-\bm u_h$ can be concluded as follows
	\begin{align*}
	&\|e_h^{\bm u}\|_0+\|\nabla\times e_h^{\bm u}\|_0\nonumber\\
	&\qquad\le C\left(
	\sum_{K\in\mathcal{T}_h}\|\nabla\times(\nabla\times e_h^{\bm u})\|^2_{0,K}
	+\sum_{F\in\mathcal{E}_h}\color{black}\tau h_F^{-1}\color{black}\|\bm n\times[\![\nabla\times e_h^{\bm u}]\!]\|^2_{0,F}
	\right)^{\frac{1}{2}},
	\end{align*}
	according to \Cref{th:embed2} and the triangle inequality.
	Using \eqref{error11} with $\bm v_h=-\nabla e_h^p\in \bm E_h$ yields
	\begin{align*}
\|\nabla e_h^p\|_0^2
	&=	-(\nabla (p-\Pi_h^Qp),\nabla e_h^p)\le Ch^{s_p-1}\|p\|_{s_p}\|\nabla e_h^p\|_0,
	\end{align*}
	which  further provides us with completeness of the proof of the estimate for $\nabla (p-p_h)$ by using the triangle inequality.
	
\end{proof}

\section{$H({\rm curl})$ error estimate\label{sec:dual}}

To derive the $\bm H({\rm curl})$ error estimate, we need the following dual problem:
\par
Find $(\bm\Phi,\Psi)\in \bm H_0({\rm curl}^2;\Omega)\times H_0^1(\Omega)$ such that
\begin{subequations}\label{PDE:dual}
	\begin{align}
	\nabla\times(\nabla\times(\nabla\times(\nabla\times\bm \Phi)))+\nabla \Psi&=\bm \Theta&\text{ in }\Omega,\\
	\nabla\cdot \bm\Phi&=0&\text{ in }\Omega,\\
	\bm n\times \bm\Phi&=\bm 0 &\text{ on }\partial\Omega,\\
	\bm n\times(\nabla\times \bm\Phi)&=\bm 0&\text{ on }\partial\Omega,\\
	\Psi&=0&\text{ on }\partial\Omega.
	\end{align}
\end{subequations}
We notice that $\Psi=0$ when $\nabla\cdot\bm{\Theta}=0$.
\begin{assumption}\label{ass2} We assume that the following regularity \color{black} holds for the weak solution $(\bm{\Phi},\Psi)\in \bm H_0({\rm curl}^2;\Omega)\times H_0^1(\Omega)$ of \eqref{PDE:dual} \color{black}
	\begin{align}\label{reg}
	\|\bm{\Phi}\|_{\beta}
	+\|\nabla\times(\nabla\times\bm{\Phi})\|_{\beta}
	\color{black}
	+\|\nabla\times\bm\Phi\|_{1+\gamma}
	\le C_{\rm reg}\|\bm \Theta\|_0,
	\end{align}
where $\beta\in (\frac{1}{2},\color{black}1]$, $\gamma\in [0,1]$, $\gamma\le \beta$, and $C_{\rm reg}$ is a constant independent of mesh size.
\end{assumption}
We notice that when $\Omega$ is convex, \eqref{reg} holds as $ \gamma=1, \beta=1$ from the regularity result in \cite[Theorem 11]{MR3808156}.

\begin{lemma}  \label{lemma:dual}	\color{black} Let $(\bm u,p)\in \bm H_0({\rm curl}^2;\Omega)\times H_0^1(\Omega)$ be the weak solution of \eqref{PDE:orignial}, and let \Cref{ass}, \Cref{ass2} hold true; let $(\bm u_h, p_h)\in \bm E_h\times Q_h$ be the solution of \eqref{FEM}, \color{black} and let $\nabla\cdot\bm \Theta=0$. Then we have the following error estimates
	\begin{align*}
	(\bm \Theta,\bm u-\bm u_h)
	\le  Ch^{{\sigma}}
	(
	h^{s_{u_0}}\|\bm u\|_{s_{u_0}}
	+	h^{s_{u_1}-1}\|\nabla\times\bm u\|_{s_{u_1}}
	+   h^{s_{u_2}}\|\nabla\times(\nabla\times\bm u)\|_{s_{u_2}}
	+	h^{s_p-1}\|p\|_{s_p}
	)
	\|\bm{\Theta}\|_0,
	\end{align*}
	where $\sigma=\min(\beta,\gamma)$
	with $\beta$, $\gamma$ being defined in \eqref{reg}.	
\end{lemma}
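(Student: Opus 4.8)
The plan is to run a standard duality (Aubin–Nitsche) argument, exploiting the self-adjointness of the quad-curl operator and the fact that $\Psi=0$ because $\nabla\cdot\bm\Theta=0$. Write $a_h(\cdot,\cdot)$ for the symmetric bilinear form on the left of \eqref{fem01} without the pressure coupling $(\nabla p_h,\bm v_h)$. Since $\bm\Phi$ solves \eqref{PDE:dual} with $\Psi=0$, the dual analogue of \Cref{pipi} (obtained by replacing $(\bm u,p,\bm f)$ with $(\bm\Phi,\Psi,\bm\Theta)$) gives $(\bm\Theta,\bm v_h)=a_h(\bm\Phi,\bm v_h)$ for every $\bm v_h\in\bm E_h$; the same identity holds with $\bm v_h$ replaced by the conforming exact solution $\bm u$, where the face terms drop out because $\nabla\times\bm u$ and $\nabla\times\bm\Phi$ are single-valued and the boundary terms vanish thanks to $\bm n\times\bm u=\bm n\times(\nabla\times\bm u)=\bm 0$ (this is the two-fold integration by parts underlying \Cref{th-int1}). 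By linearity this yields the key reduction $(\bm\Theta,\bm u-\bm u_h)=a_h(\bm\Phi,\bm u-\bm u_h)=a_h(\bm u-\bm u_h,\bm\Phi)$, using the symmetry of $a_h$.

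Next I would insert the interpolant $\bm\Phi_I:=\bm\Pi_h^{\bm E}\bm\Phi\in\bm E_h$, splitting $a_h(\bm u-\bm u_h,\bm\Phi)=a_h(\bm u-\bm u_h,\bm\Phi-\bm\Phi_I)+a_h(\bm u-\bm u_h,\bm\Phi_I)$. The primal error equation \eqref{error11} with $\bm v_h=\bm\Phi_I$ turns the second term into $-(\nabla(p-p_h),\bm\Phi_I)$. Adding and subtracting $(\nabla(p-p_h),\bm\Phi)$ and observing that $(\nabla(p-p_h),\bm\Phi)=0$ — integrate by parts, use $\nabla\cdot\bm\Phi=0$ in $\Omega$ and $p-p_h\in H^1_0(\Omega)$ so the boundary term vanishes — leaves the clean expression $(\bm\Theta,\bm u-\bm u_h)=a_h(\bm u-\bm u_h,\bm\Phi-\bm\Phi_I)+(\nabla(p-p_h),\bm\Phi-\bm\Phi_I)$, in which every factor carrying $\bm\Phi$ now appears only through the dual interpolation error $\bm\Phi-\bm\Phi_I$.

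It then remains to bound the two terms by the product of an energy-type quantity for $\bm u-\bm u_h$ (controlled by \Cref{error estimates}, i.e.\ by $\mathcal E_h:=h^{s_{u_0}}\|\bm u\|_{s_{u_0}}+h^{s_{u_1}-1}\|\nabla\times\bm u\|_{s_{u_1}}+h^{s_{u_2}}\|\nabla\times(\nabla\times\bm u)\|_{s_{u_2}}+h^{s_p-1}\|p\|_{s_p}$) and an interpolation error for $\bm\Phi-\bm\Phi_I$. For the second term, Cauchy–Schwarz together with $\|\nabla(p-p_h)\|_0\le Ch^{s_p-1}\|p\|_{s_p}$ and $\|\bm\Phi-\bm\Phi_I\|_0\le Ch^{\beta}\|\bm\Theta\|_0$ (from the approximation property after \eqref{def:piE}, the estimate \eqref{app-u1}, and the regularity \eqref{reg}, using $\nabla\times\bm\Phi\in[H^{1+\gamma}(\Omega)]^3\subset[H^\beta(\Omega)]^3$) gives a bound of order $h^{\beta}\mathcal E_h\|\bm\Theta\|_0\le Ch^{\sigma}\mathcal E_h\|\bm\Theta\|_0$. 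For $a_h(\bm u-\bm u_h,\bm\Phi-\bm\Phi_I)$ I would apply Cauchy–Schwarz face-by-face and element-by-element, controlling the $\bm u-\bm u_h$ factors by $\mathcal E_h$ and the $\bm\Phi-\bm\Phi_I$ factors by interpolation: the volume and the jump/stabilization contributions are $O(h^{\gamma})$ (estimated as in \eqref{est-nabla} and in the bounds for $R_4,R_5$, with $s_{u_1}-1$ replaced by $\gamma$), while the face-average contribution is $O(h^{\min(\beta,\gamma)})$ (as in the bound for $R_3$, with $s_{u_2}$ replaced by $\beta$ since $\nabla\times(\nabla\times\bm\Phi)\in[H^\beta(\Omega)]^3$). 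Collecting the slowest rate yields exactly $h^{\sigma}$ with $\sigma=\min(\beta,\gamma)$.

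The main obstacle, as usual in such DG duality arguments, will be the face terms of $a_h(\bm u-\bm u_h,\bm\Phi-\bm\Phi_I)$: because the dual data have only the limited regularity $\beta>\tfrac12$, the traces of $\nabla\times(\nabla\times\bm\Phi)$ on faces are not directly estimable, so I would first pass to $L^2$-projections onto piecewise polynomials and only then invoke discrete trace and inverse inequalities, mimicking the device \eqref{est-nabla} from \Cref{error estimates}. Tracking which interpolation estimate supplies which power of $h$, so as to land on the sharp exponent $\sigma=\min(\beta,\gamma)$ rather than something smaller, is the delicate bookkeeping point.
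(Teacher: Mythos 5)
Your argument is correct and sits inside the same duality framework as the paper, but it differs in one concrete choice: you subtract off the interpolant $\bm\Pi_h^{\bm E}\bm\Phi$, whereas the paper subtracts off the \emph{discrete dual solution} $\bm\Phi_h$ defined by solving the mixed scheme \eqref{FEM2} for the dual data. Both choices lead, after using the primal error equation \eqref{error11} and the identity $(\nabla(p-p_h),\bm\Phi)=0$, to the same two-term expression $a_h(\bm u-\bm u_h,\bm\Phi-\cdot)+(\nabla(p-p_h),\bm\Phi-\cdot)$, and both require the same delicate face-term bookkeeping (inserting $L^2$- or N\'ed\'elec-projections before invoking discrete trace and inverse inequalities, exactly as in \eqref{est-nabla} and the $R_3$--$R_5$ bounds). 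What the paper's route buys is reuse: the factor $\bm\Phi-\bm\Phi_h$ is controlled wholesale by \Cref{error estimates} applied to the dual problem (estimate \eqref{error:dual}), so the face estimates for the dual factor come for free from the already-proven theorem. What your route buys is economy: you avoid introducing and analyzing the auxiliary discrete problem \eqref{FEM2} altogether, at the price of redoing the interpolation-based face estimates for $\bm\Phi-\bm\Pi_h^{\bm E}\bm\Phi$ by hand — which you correctly identify as the delicate point, and which does close since $\nabla\times(\nabla\times\bm\Phi_I)=\nabla\times(\nabla\times\bm\Pi_{h,k+1}^{\rm curl}\bm\Phi)$ (the gradient correction in \eqref{def:piE} is curl-free) and the regularity \eqref{reg} supplies exactly the powers $h^{\beta}$ and $h^{\gamma}$ needed to land on $\sigma=\min(\beta,\gamma)$. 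Your separate treatment of the consistency identity for $\bm v_h\in\bm E_h$ and for the conforming $\bm u$ is also a cleaner justification of the paper's step \eqref{equa:dual}, which is only asserted ``by arguments similar to \Cref{pipi}.''
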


\begin{proof} By arguments similar to those in the
	proof of \Cref{pipi}, we have the following equations \color{black} hold  for the weak solution $(\bm{\Phi},\Psi)\in \bm H_0({\rm curl}^2;\Omega)\times H_0^1(\Omega)$ of \eqref{PDE:dual} \color{black}:
	\begin{subequations}\label{equa:dual}
		\begin{align}
		\sum_{K\in\mathcal{T}_h}(\nabla\times(\nabla\times \bm\Phi),\nabla\times(\nabla\times(\bm u-\bm u_h)))_{K}+(\nabla \Psi,(\bm u-\bm u_h))&\nonumber\\
		-\sum_{F\in\mathcal{E}_h}\langle\{\!\!\{ \nabla\times(\nabla\times\bm \Phi)\}\!\!\},\bm n\times[\![\nabla\times(\bm u-\bm u_h)]\!] \rangle_F&\nonumber\\
		-\sum_{F\in\mathcal{E}_h}\langle\{\!\!\{ \nabla\times(\nabla\times(\bm u-\bm u_h))\}\!\!\},\bm n\times[\![\nabla\times\bm \Phi]\!] \rangle_F&\nonumber\\
		+\sum_{F\in\mathcal{E}_h}\color{black}\tau h_F^{-1}\color{black}\langle\bm n\times[\![\nabla\times\bm \Phi]\!],\bm n\times[\![\nabla\times(\bm u-\bm u_h)]\!] \rangle_F
		&=(\bm \Theta,(\bm u-\bm u_h)),\\
		(\bm \Phi,\nabla q)&=0.
		\end{align}
	\end{subequations}
	We use the fact $\Psi=0$ \color{black} to get \color{black}
	\begin{align}\label{proof:dual1}
	(\bm \Theta,\bm u-\bm u_h)
	&=\sum_{K\in\mathcal{T}_h}(\nabla\times(\nabla\times(\bm u-\bm u_h)),\nabla\times(\nabla\times \bm\Phi))_{K}\nonumber\\
	&\quad-\sum_{F\in\mathcal{E}_h}\langle\{\!\!\{ \nabla\times(\nabla\times\bm \Phi)\}\!\!\},\bm n\times[\![\nabla\times(\bm u-\bm u_h)]\!] \rangle_F\nonumber\\
	&\quad-\sum_{F\in\mathcal{E}_h}\langle\{\!\!\{ \nabla\times(\nabla\times(\bm u-\bm u_h))\}\!\!\},\bm n\times[\![\nabla\times\bm \Phi]\!] \rangle_F\nonumber\\
	&\quad+\sum_{F\in\mathcal{E}_h}\color{black}\tau h_F^{-1}\color{black}\langle\bm n\times[\![\nabla\times\bm \Phi]\!],\bm n\times[\![\nabla\times(\bm u-\bm u_h)]\!] \rangle_F.
	\end{align}

	Let $(\bm{\Phi}_h,\Psi_h)\in \bm E_h\times Q_h$ be the solution of the following system:  for $\forall\, (\bm v_h,q_h)\in \bm E_h\times Q_h$
	\begin{subequations}\label{FEM2}
		\begin{align}
		\sum_{K\in\mathcal{T}_h}(\nabla\times(\nabla\times\bm \Phi_h),\nabla\times(\nabla\times\bm v_h))_{K}+(\nabla \Psi_h,\bm v_h)&\nonumber\\
		-\sum_{F\in\mathcal{E}_h}\langle\{\!\!\{ \nabla\times(\nabla\times\bm \Phi_h)\}\!\!\},\bm n\times[\![\nabla\times\bm v_h]\!] \rangle_F&\nonumber\\
		-\sum_{F\in\mathcal{E}_h}\langle\{\!\!\{ \nabla\times(\nabla\times\bm v_h)\}\!\!\},\bm n\times[\![\nabla\times\bm \Phi_h]\!] \rangle_F&\nonumber\\
		+\sum_{F\in\mathcal{E}_h}\color{black}\tau h_F^{-1}\color{black}\langle\bm n\times[\![\nabla\times\bm \Phi_h]\!],\bm n\times[\![\nabla\times\bm v_h]\!] \rangle_F
		&=(\bm f,\bm v_h),\\
		(\bm \Phi_h,\nabla q_h)&=0
		\end{align}
	\end{subequations}
	Using \Cref{error estimates}, and the fact $\Psi=0$, we can get the error estimate:
	\begin{align}\label{error:dual}
	&\|\bm \Phi-\bm \Phi_h\|_0+\|\nabla\times(\bm \Phi-\bm \Phi_h)\|_0	+\|\nabla (\Psi-\Psi_h)\|_0\nonumber\\
	&\quad+\sum_{K\in\mathcal{T}_h}\|\nabla\times(\nabla\times (\bm \Phi-\bm \Phi_h))\|^2_{0,K}
	+\sum_{F\in\mathcal{E}_h}\color{black}\tau h_F^{-1}\color{black}\|\bm n\times[\![\nabla\times (\bm \Phi-\bm \Phi_h)]\!]\|^2_{0,F}
	\nonumber\\
	&\quad\quad	 \color{black}
	\le C\left(h^{\beta}
	\|\bm\Phi\|_{\beta}
	+h^{\beta}\|\nabla\times(\nabla\times\bm{\Phi})\|_{\beta}
	+h^{\gamma}\|\nabla\times\bm{\Phi}\|_{1+\gamma}\right).
	\end{align}
	From \eqref{error1}, it follows that
	\begin{subequations}\label{proof:dual2}
		\begin{align}
		\sum_{K\in\mathcal{T}_h}(\nabla\times(\nabla\times(\bm u-\bm u_h)),\nabla\times(\nabla\times\bm \Phi_h))_{K}+(\nabla (p-p_h),\bm \Phi_h)&\nonumber\\
		-\sum_{F\in\mathcal{E}_h}\langle\{\!\!\{ \nabla\times(\nabla\times(\bm u-\bm u_h))\}\!\!\},\bm n\times[\![\nabla\times\bm \Phi_h]\!] \rangle_F&\nonumber\\
		-\sum_{F\in\mathcal{E}_h}\langle\{\!\!\{ \nabla\times(\nabla\times\bm \Phi_h)\}\!\!\},\bm n\times[\![\nabla\times(\bm u-\bm u_h)]\!] \rangle_F&\nonumber\\
		+\sum_{F\in\mathcal{E}_h}\color{black}\tau h_F^{-1}\color{black}\langle\bm n\times[\![\nabla\times(\bm u-\bm u_h)]\!],\bm n\times[\![\nabla\times\bm \Phi_h]\!] \rangle_F
		&=0,\\
		(\bm u-\bm u_h,\nabla \Psi_h)&=0.
		\end{align}
	\end{subequations}
In terms of  \eqref{proof:dual1} and \eqref{proof:dual2}, and by the fact $(\nabla(p-p_h),\bm{\Phi}_h)=(\nabla(p-p_h),\bm{\Phi}_h-\bm{\Phi})$, we have
	\begin{align}\label{T0}
	(\bm \Theta,\bm u-\bm u_h)&=\sum_{K\in\mathcal{T}_h}(\nabla\times(\nabla\times(\bm u-\bm u_h)),\nabla\times(\nabla\times (\bm\Phi-\bm\Phi_h)))_{K}\nonumber\\
	&\quad-\sum_{F\in\mathcal{E}_h}\langle\{\!\!\{ \nabla\times(\nabla\times(\bm \Phi-\bm\Phi_h))\}\!\!\},\bm n\times[\![\nabla\times(\bm u-\bm u_h)]\!] \rangle_F\nonumber\\
	&\quad-\sum_{F\in\mathcal{E}_h}\langle\{\!\!\{ \nabla\times(\nabla\times(\bm u-\bm u_h))\}\!\!\},\bm n\times[\![\nabla\times(\bm \Phi-\bm\Phi_h)]\!] \rangle_F\nonumber\\
	&\quad+\sum_{F\in\mathcal{E}_h}\color{black}\tau h_F^{-1}\color{black}\langle\bm n\times[\![\nabla\times(\bm \Phi-\bm\Phi_h)]\!],\bm n\times[\![\nabla\times(\bm u-\bm u_h)]\!] \rangle_F
	\nonumber\\
	&\quad+(\nabla (p-p_h),\bm\Phi-\bm \Phi_h)
	\nonumber\\
	&=:T_1+T_2+T_3+T_4+T_5.
	\end{align}
	Now we make the estimate for $\{T_i\}_{i=1}^5$ individually. To simply the notation, we define
	\begin{align*}
	\mathcal M:=(
	h^{s_{u_0}}\|\bm u\|_{s_{u_0}}
	+	h^{s_{u_1}-1}\|\nabla\times\bm u\|_{s_{u_1}}
	\color{black}
		+	h^{s_{u_2}}\|\nabla\times(\nabla\times\bm u)\|_{s_{u_2}}
	+	h^{s_p-1}\|p\|_{s_p}
	).
	\end{align*}
	We use the Cauchy-Schwartz inequality,
	the estimate in \Cref{error estimates}, the error estimate \eqref{error:dual} and the regularity \eqref{reg}
	to get
	\begin{align}\label{T1}
	|T_1|&\le \left(
	\sum_{K\in\mathcal{T}_h}\|\nabla\times(\nabla\times(\bm u-\bm u_h)   )\|_{0,K}
	\right)^{\frac{1}{2}}
	\left(
	\sum_{K\in\mathcal{T}_h}\|\nabla\times(\nabla\times(\bm \Phi-\bm \Phi_h)   )\|_{0,K}
	\right)^{\frac{1}{2}} \nonumber\\
	&\le
	C\mathcal M \color{black}
	\left( h^{\beta}
	\|\bm\Phi\|_{\beta}
	+h^{\beta}\|\nabla\times(\nabla\times\bm{\Phi})\|_{\beta}
	+h^{\gamma}\|\nabla\times\bm{\Phi}\|_{1+\gamma}
	\right)\nonumber\\
	&\le  Ch^{{\sigma}}\mathcal{M}\|\bm{\Theta}\|_0.
	\end{align}
	Using  the Cauchy-Schwartz inequality, the triangle inequality, the inverse inequality, the error estimates in \eqref{error:dual}, \eqref{error estimates}, and the regularity \eqref{reg},
	we get
	\begin{align}\label{T2}
	|T_2|&\le \sum_{F\in \mathcal{E}_h}
	h_F^{\frac{1}{2}}\|	\{\!\!\{ \nabla\times(\nabla\times(\bm \Phi-\bm\Phi_h))\}\!\!\}\|_{0,F}
	h_F^{-\frac{1}{2}}\|\bm n\times[\![\nabla\times(\bm u-\bm u_h)]\!] \|_{0,F}\nonumber\\
	&\le \sum_{F\in \mathcal{E}_h}
	\|	\{\!\!\{ \nabla\times(\nabla\times\bm\Phi-\bm{\Pi}_{h,k+1}^{\rm curl}(\nabla\times\bm \Phi))\}\!\!\}\|_{0,F}
	\|\bm n\times[\![\nabla\times(\bm u-\bm u_h)]\!] \|_{0,F}\nonumber\\
	&\quad+\sum_{F\in \mathcal{E}_h}
	\|	\{\!\!\{ \nabla\times(\bm{\Pi}_{h,k+1}^{\rm curl}(\nabla\times\bm \Phi)-\nabla\times\bm\Phi_h))\}\!\!\}\|_{0,F}
	\|\bm n\times[\![\nabla\times(\bm u-\bm u_h)]\!] \|_{0,F}\nonumber\\
	&\le Ch^{\beta}\mathcal M\|\nabla\times(\nabla\times\bm\Phi)\|_{\beta}
	+
	Ch^{\gamma}\mathcal M\|\nabla\times\bm\Phi\|_{1+\gamma}
	\nonumber\\
	&\quad	+C\mathcal M\left(
	\sum_{K\in\mathcal{T}_h}
	\|\nabla\times(\bm{\Pi}_{h,k+1}^{\rm curl}(\nabla\times\bm \Phi)-\nabla\times\bm\Phi_h))\|_{0,K}
	\right)^{\frac{1}{2}}\nonumber\\
	&\le
	C\mathcal M
	\left( h^{\beta}
	\|\bm\Phi\|_{\beta}+h^{\beta}\|\nabla\times(\nabla\times\bm{\Phi})\|_{\beta}
	+h^{\gamma}\|\nabla\times\bm{\Phi}\|_{1+\gamma}
	\right)\nonumber\\
	&\le  Ch^{{\sigma}}\mathcal{M}\|\bm{\Theta}\|_0.
	\end{align}
	Similar to \eqref{T2}, one can get
	\begin{align}\label{T3}
	|T_3|&\le \sum_{F\in \mathcal{E}_h}
	\|	\{\!\!\{ \nabla\times(\nabla\times(\bm u-\bm u_h))\}\!\!\}\|_{0,F}
	\|\bm n\times[\![\nabla\times(\bm \Phi-\bm \Phi_h)]\!] \|_{0,F}\nonumber\\
	&\le \sum_{F\in \mathcal{E}_h}
	\|	\{\!\!\{ \nabla\times(\nabla\times\bm u-\bm{\Pi}_{h,k+1}^{\rm curl}(\nabla\times\bm u))\}\!\!\}\|_{0,F}
	\|\bm n\times[\![\nabla\times(\bm \Phi-\bm \Phi_h)]\!] \|_{0,F}\nonumber\\
	&\quad+\sum_{F\in \mathcal{E}_h}
	\|	\{\!\!\{ \nabla\times(\bm{\Pi}_{h,k+1}^{\rm curl}(\nabla\times\bm u)-\nabla\times\bm u_h))\}\!\!\}\|_{0,F}
	\|\bm n\times[\![\nabla\times(\bm \Phi-\bm \Phi_h)]\!] \|_{0,F}\nonumber\\
	&\le
	C\mathcal M \color{black}
	\left( h^{\beta}
	\|\bm\Phi\|_{\beta}
	+h^{\beta}\|\nabla\times(\nabla\times\bm{\Phi})\|_{\beta}
	+h^{\gamma}\|\nabla\times\bm{\Phi}\|_{1+\gamma}
	\right)\nonumber\\
	&\le  Ch^{{\sigma}}\mathcal{M}\|\bm{\Theta}\|_0,\\
\label{T4}
	|T_4|	&\le
	C\mathcal M \color{black}
	\left( h^{\beta}
	\|\bm\Phi\|_{\beta}
	+h^{\beta}\|\nabla\times(\nabla\times\bm{\Phi})\|_{\beta}
	+h^{\gamma}\|\nabla\times\bm{\Phi}\|_{1+\gamma}
	\right)\le  Ch^{{\sigma}}\mathcal{M}\|\bm{\Theta}\|_0.
	\end{align}
	Again, we use the Cauchy-Schwartz inequality, the estimate in \Cref{error estimates}, the error estimates in \eqref{error:dual} and the regularity \eqref{reg} to get
	\begin{align}\label{T5}
	|T_5|&\le \|\nabla p-\nabla p_h\|_0\|\bm{\Phi}-\bm\Phi_h\|_0\nonumber\\
	&\le
	C\mathcal M \color{black}
	\left( h^{\beta}\|\bm{\Phi}\|_{\beta}
	+h^{\beta}\|\nabla\times(\nabla\times\bm{\Phi})\|_{\beta}
	+h^{\gamma}\|\nabla\times\bm{\Phi}\|_{1+\gamma}
	\right)\nonumber\\
	&\le  Ch^{{\sigma}}\mathcal{M}\|\bm{\Theta}\|_0.
	\end{align}
	We thus complete the proof  by  applying \eqref{T0}, \eqref{T1}, \eqref{T2}, \eqref{T3}, \eqref{T4} and \eqref{T5}.
\end{proof}

In the following two subsections,  we will give estimates for
$\|\bm u-\bm u_h\|_0$ and $\|\nabla\times(\bm u-\bm u_h)\|_0$, respectively,  and then obtain the error estimate in $\bm H({\rm curl})$ norm.

\subsection{$L^2$ error estimate}

\begin{theorem}\label{dual-u} 	\color{black} Let $(\bm u,p)\in \bm H_0({\rm curl}^2;\Omega)\times H_0^1(\Omega)$ be the weak solution of \eqref{PDE:orignial}, and let \Cref{ass}, \Cref{ass2} hold true; let $(\bm u_h, p_h)\in \bm E_h\times Q_h$ be the solution of \eqref{FEM}. \color{black} Then we have the following error estimates
	\begin{align}
	\|\bm u-\bm u_h\|_0\le&
	Ch^{{\sigma}}
	(	h^{s_{u_1}-1}\|\nabla\times\bm u\|_{s_{u_1}}
	\color{black}
	+h^{s_{u_2}}\|\nabla\times(\nabla\times\bm u)\|_{s_{u_2}}
	+	h^{s_p-1}\|p\|_{s_p}
	) +Ch^{s_{u_0}}\|\bm u\|_{s_{u_0}},
	\end{align}	
	where $\sigma=\min(\alpha,\beta,\gamma)$,
	$\alpha$ is defined in \Cref{embed},
	$\beta$ and $\gamma$ are defined in \eqref{reg}.
	
\end{theorem}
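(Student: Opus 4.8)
The plan is to run an Aubin--Nitsche duality argument driven by \Cref{lemma:dual}. Since that lemma applies only to solenoidal data, I would first write, for an arbitrary $\bm\Theta\in[L^2(\Omega)]^3$, the $L^2$-orthogonal Helmholtz decomposition $\bm\Theta=\bm\Theta_0+\nabla\phi$, where $\phi\in H_0^1(\Omega)$ solves $(\nabla\phi,\nabla q)=(\bm\Theta,\nabla q)$ for all $q\in H_0^1(\Omega)$, so that $\bm\Theta_0:=\bm\Theta-\nabla\phi$ is divergence-free and $\|\bm\Theta_0\|_0+\|\nabla\phi\|_0\le C\|\bm\Theta\|_0$. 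Testing the error against $\bm\Theta$ then gives
\[
\|\bm u-\bm u_h\|_0=\sup_{\bm 0\neq\bm\Theta}\frac{(\bm\Theta_0,\bm u-\bm u_h)+(\nabla\phi,\bm u-\bm u_h)}{\|\bm\Theta\|_0},
\]
and I would bound the solenoidal and irrotational pairings separately. The solenoidal pairing is handled directly by \Cref{lemma:dual}: as $\nabla\cdot\bm\Theta_0=0$, it yields $(\bm\Theta_0,\bm u-\bm u_h)\le Ch^{\min(\beta,\gamma)}\mathcal M\,\|\bm\Theta\|_0$, with $\mathcal M$ the energy-error quantity from the proof of that lemma; this accounts for the $h^{\min(\beta,\gamma)}$ part of the exponent $\sigma$.

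For the irrotational pairing I would exploit the (discrete) divergence constraints. Since $(\bm u,\nabla\phi)=0$ by \eqref{var2} and $(\bm u_h,\nabla\Pi_h^Q\phi)=0$ by \eqref{fem02}, one has $(\nabla\phi,\bm u-\bm u_h)=(\nabla(\phi-\Pi_h^Q\phi),\bm u-\bm u_h)$. Writing $\bm u-\bm u_h=(\bm u-\bm\Pi_h^{\bm E}\bm u)+e_h^{\bm u}$ and applying the Cauchy--Schwarz inequality to the interpolation part, together with $\|\nabla(\phi-\Pi_h^Q\phi)\|_0\le C\|\bm\Theta\|_0$ and $\|\bm u-\bm\Pi_h^{\bm E}\bm u\|_0\le Ch^{s_{u_0}}\|\bm u\|_{s_{u_0}}$, reproduces precisely the additive term $Ch^{s_{u_0}}\|\bm u\|_{s_{u_0}}$ sitting outside the $h^\sigma$ factor in the statement.

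The decisive term is then $(\nabla(\phi-\Pi_h^Q\phi),e_h^{\bm u})=(\nabla\phi,e_h^{\bm u})$, where I used that $e_h^{\bm u}\in\bm E_h$ is discretely divergence-free (a consequence of \eqref{or-p}, \eqref{var2} and \eqref{fem02}). Introducing the irrotational representative $\zeta\in H_0^1(\Omega)$ of $e_h^{\bm u}$, defined by $(\nabla\zeta,\nabla v)=(e_h^{\bm u},\nabla v)$ for all $v\in H_0^1(\Omega)$, the remainder $e_h^{\bm u}-\nabla\zeta$ is $L^2$-orthogonal to all gradients, so $(\nabla\phi,e_h^{\bm u})=(\nabla\phi,\nabla\zeta)\le\|\nabla\phi\|_0\|\nabla\zeta\|_0\le C\|\bm\Theta\|_0\|\nabla\zeta\|_0$. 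The discrete divergence-freeness of $e_h^{\bm u}$ forces the Galerkin projection of $\zeta$ onto $Q_h$ to vanish, so $\|\nabla\zeta\|_0$ equals the best $H^1$-seminorm approximation error of $\zeta$ by $Q_h$; invoking the elliptic-regularity index $\alpha\in(\tfrac12,1]$ of the Lipschitz polyhedron --- the same index as in \Cref{embed} --- together with the energy control $\|\nabla\times e_h^{\bm u}\|_0\le C\mathcal M$ from \Cref{error estimates} and the discrete Poincar\'e bound \eqref{L31}, should deliver $\|\nabla\zeta\|_0\le Ch^\alpha\mathcal M$.

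Collecting the three contributions, taking the supremum over $\bm\Theta$, and absorbing for $h$ small any term of the form $h^{\text{positive}}\|\bm u-\bm u_h\|_0$ then gives the claimed bound with $\sigma=\min(\alpha,\beta,\gamma)$. I expect this last, irrotational, step to be the main obstacle: because $\bm\Theta$ is only in $[L^2(\Omega)]^3$, the potential $\phi$ has no more than $H^1$ regularity, so the extra power of $h$ cannot come from approximating $\phi$ directly and must instead be squeezed out of the discrete divergence-free structure of $e_h^{\bm u}$ and the $\alpha$-regularity of the domain --- exactly the low-regularity difficulty that pervades the rest of the paper, and the point where the regularization of $\zeta$ and the vanishing Galerkin projection have to be justified carefully.
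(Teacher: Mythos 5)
Your overall architecture is essentially the paper's argument in a different wrapper: the paper does not Helmholtz-decompose an arbitrary $\bm\Theta$, but instead directly chooses the dual datum $\bm\Theta$ to be the divergence-free field in $\bm X_N$ with $\nabla\times\bm\Theta=\nabla\times(\bm\Pi_h^{\bm E}\bm u-\bm u_h)$, so that your $\nabla\zeta$ is exactly the paper's $(\bm\Pi_h^{\bm E}\bm u-\bm u_h)-\bm\Theta$, and the three contributions you isolate (dual estimate via \Cref{lemma:dual}, interpolation error of $\bm\Pi_h^{\bm E}\bm u$, and the irrotational remainder) correspond one-to-one with the paper's splitting of $\|\bm u-\bm u_h\|_0^2$. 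So the route is sound, and the additive $Ch^{s_{u_0}}\|\bm u\|_{s_{u_0}}$ term and the $h^{\min(\beta,\gamma)}$ contribution are obtained correctly.

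The genuine gap is your justification of $\|\nabla\zeta\|_0\le Ch^{\alpha}\mathcal M$. You correctly observe that the Ritz projection of $\zeta$ onto $Q_h$ vanishes, so $\|\nabla\zeta\|_0=\inf_{q_h\in Q_h}\|\nabla(\zeta-q_h)\|_0$; but to extract a factor $h^{\alpha}$ from this best-approximation error you would need $\zeta\in H^{1+\alpha}(\Omega)$, and elliptic regularity does not deliver that here: $\zeta$ solves $\Delta\zeta=\nabla\cdot e_h^{\bm u}$ with a right-hand side that is only in $H^{-1}(\Omega)$ (it carries distributional contributions from the jumps of $\bm n\cdot e_h^{\bm u}$ across element faces), so $\zeta$ is merely $H^1_0$ and the infimum above need not be small. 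The discrete Poincar\'e bound \eqref{L31} gives $L^3$ control of $e_h^{\bm u}$ and cannot produce a power of $h$ either. The correct mechanism --- and the one the paper invokes by citing \cite[Lemma 4.5]{MR2009375} --- is to apply the $\alpha$-regularity not to $\zeta$ but to the solenoidal complement $e_h^{\bm u}-\nabla\zeta$, which lies in $\bm X_N$ and hence in $[H^{\alpha}(\Omega)]^3$ by \Cref{embed} with norm controlled by $\|\nabla\times e_h^{\bm u}\|_0$; one then uses the commuting-diagram property of the N\'ed\'elec interpolant together with the discrete divergence constraint $(e_h^{\bm u},\nabla q_h)=0$ to show $\|\nabla\zeta\|_0=\|e_h^{\bm u}-(e_h^{\bm u}-\nabla\zeta)\|_0\le Ch^{\alpha}\|\nabla\times e_h^{\bm u}\|_0$. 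Replacing your elliptic-regularity appeal by this discrete Helmholtz estimate closes the gap and yields exactly the paper's proof.
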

\begin{proof}
	
	We take $\bm{\Theta}$ satisfying the following problem:
	\par
	Find $\bm \Theta\in \bm H({\rm curl};\Omega)\cap \bm H({\rm div};\Omega)$ such that
	\begin{align}\label{theta}
	\nabla\times\bm\Theta&=\nabla\times(\bm{\Pi}_h^{\bm E}\bm u-\bm u_h)&\text{in }\Omega,\\
	\nabla\cdot\bm \Theta&=0&\text{in }\Omega,\\
	\bm n\times\bm \Theta&=\bm 0&\text{on }\partial\Omega.
	\end{align}
	It follows form \eqref{fem01} and \eqref{or-p} that
	\begin{align*}
	(\bm{\Pi}_h^{\bm E}\bm u-\bm u_h,\nabla q_h)=(\bm u,\nabla q_h)=-(\nabla\cdot\bm u,q_h)=0
	\end{align*}
	for all $q_h\in Q_h$.
	Due to the result in \cite[Lemma 4.5]{MR2009375},  one has
	\begin{align}
	\|(\bm{\Pi}_h^{\bm E}\bm u-\bm u_h)-\bm{\Theta}\|_0&\le Ch^{\alpha}\|\nabla\times(\bm{\Pi}_h^{\bm E}\bm u-\bm u_h)\|_{0},\label{theta02}
	\end{align}
	where $\alpha$ is defined in \Cref{embed}. As a result, using the triangle inequality, the estimate \eqref{theta02}, and $\sigma\le \alpha$, one can get
	\begin{align}\label{est:theta01}
	\|\bm\Theta\|_0&\le \|(\bm{\Pi}_h^{\bm E}\bm u-\bm u_h)-\bm{\Theta}\|_0+
	\|\bm{\Pi}_h^{\bm E}\bm u-\bm u_h\|_0\nonumber\\
	&\le Ch^{\sigma}\|\nabla\times(\bm{\Pi}_h^{\bm E}\bm u-\bm u_h)\|_{0}+\|\bm u-\bm u_h\|_0 \nonumber\\
	&\quad +Ch^{s_{u_0}}(\|\bm u\|_{s_{u_0}}
	+\delta(s_{u_0})h\|\nabla\times\bm u\|_{s_{u_0}}
	)\nonumber\\
	&\le Ch^{\sigma}\mathcal M+Ch^{s_{u_0}}(\|\bm u\|_{s_{u_0}}
	+\delta(s_{u_0})h\|\nabla\times\bm u\|_{s_{u_0}}
	)+\|\bm u-\bm u_h\|_0.
	\end{align}
 Due to the triangle inequality, \eqref{est:theta01}, \Cref{lemma:dual} and \eqref{theta02}, one  arrives at
	\begin{align*}
	\|\bm u-\bm u_h\|_0^2&=(\bm u-\bm u_h,\bm u-\bm u_h)\nonumber\\
	&=(\bm \Theta,\bm u-\bm u_h)
	+((\bm{\Pi}_h^{\bm E}\bm u-\bm u_h)-\bm \Theta,\bm u-\bm u_h)
	+(\bm u-\bm{\Pi}_h^{\bm E}\bm u,\bm u-\bm u_h)\nonumber\\
	&\le Ch^{\sigma}\mathcal{M}\|\bm{\Theta}\|_0
	+Ch^{\sigma}\|\nabla\times(\bm{\Pi}_h^{\bm E}\bm u-\bm u_h)\|_{0}\|\bm u-\bm u_h\|_0
	\nonumber\\
	&\quad+Ch^{s_{u_0}}(\|\bm u\|_{s_{u_0}}
	+\delta(s_{u_0})h\|\nabla\times\bm u\|_{s_{u_0}}
	)
	\|\bm u-\bm u_h\|_0\nonumber\\
	&\le Ch^{2\sigma}\mathcal{M}^2
	+Ch^{2s_{u_0}}(\|\bm u\|_{s_{u_0}}
	+\delta(s_{u_0})h\|\nabla\times\bm u\|_{s_{u_0}}
	)^2+\frac{1}{2}\|\bm u-\bm u_h\|_0^2,
	\end{align*}
	where we have defined $\delta(s_{u_0})=0$ when $s_{u_0}>1$ and $\delta(s_{u_0})=1$ when $s_{u_0}\in (\frac{1}{2},1]$. The above inequality further implies that
	\begin{align*}
	\|\bm u-\bm u_h\|_0\le& C\left(h^{\sigma}
	(	h^{s_{u_1}-1}\|\nabla\times\bm u\|_{s_{u_1}}
	+	h^{s_p-1}\|p\|_{s_p}
	)+h^{s_{u_0}}(\|\bm u\|_{s_{u_0}}
	+\delta(s_{u_0})h\|\nabla\times\bm u\|_{s_{u_0}}
	)\right)\nonumber\\
	\le& C\left(h^{\sigma}
	(	h^{s_{u_1}-1}\|\nabla\times\bm u\|_{s_{u_1}}
	+	h^{s_p-1}\|p\|_{s_p}
	)+h^{s_{u_0}}\|\bm u\|_{s_{u_0}}\right),
	\end{align*}
 and we complete the proof.
\end{proof}

\subsection{Curl operator error estimate}

We first introduce an interpolation denoted by $\bm{\Pi}_{h,\ell}^{\rm curl,c}$. Form \cite[Proposition 4.5]{MR2194528}, for any integer $\ell\ge 1$, let $\bm v_h\in [\mathcal P_{\ell}(\mathcal{T}_h)]^3$, there exists a function $\bm{\Pi}_{h,\ell}^{\rm curl,c}\bm v_h\in
[\mathcal P_{\ell}(\mathcal{T}_h)]^3\cap \bm H_0({\rm curl};\Omega)
$ such that
\begin{align*}
\|\bm{\Pi}_{h,\ell}^{\rm curl,c}\bm v_h-\bm v_h\|_0
+h\left(\sum_{K\in\mathcal{T}_h}\|\nabla\times(\bm{\Pi}_{h,\ell}^{\rm curl,c}\bm v_h-\bm v_h)\|_{0,K}^2\right)^{\frac{1}{2}}
\le C
\left(
\sum_{F\in\mathcal{E}_h}h_F\|\bm n\times[\![
\bm v_h
]\!]\|_{0,F}^2
\right)^{\frac{1}{2}}.
\end{align*}

\begin{theorem}
	\color{black} Let $(\bm u,p)\in \bm H_0({\rm curl}^2;\Omega)\times H_0^1(\Omega)$ be the weak solution of \eqref{PDE:orignial}, and let \Cref{ass}, \Cref{ass2} hold true; let $(\bm u_h, p_h)\in \bm E_h\times Q_h$ be the solution of \eqref{FEM}. \color{black} Then we have the following error estimates
	\begin{align*}\small
	\|\nabla\times(\bm u-\bm u_h)\|_0\le&
	Ch^{\sigma}
	(	
		h^{s_{u_1}-1}\|\nabla\times\bm u\|_{s_{u_1}}
	\color{black}
		+	h^{s_{u_2}}\|\nabla\times(\nabla\times\bm u)\|_{s_{u_2}}
	+	h^{s_p-1}\|p\|_{s_p}
	)
	\\
	&\color{black}+
	Ch^{s_{u_0}}\|\bm u\|_{s_{u_0}},
	\end{align*}	
	where $\sigma=\min(\alpha,\beta,\gamma)$,
	$\alpha$ is defined in \Cref{embed},
	$\beta$ and $\gamma$ are defined in \eqref{reg}.
	
\end{theorem}
The proof of the theorem  can be obtained through arguments similar to those in the proof of \Cref{dual-u} and through considering the problem \eqref{theta} with $\nabla\times(\bm{\Pi}_{h,k+1}^{\rm curl}(\nabla\times \bm u)-\bm{\Pi}_{h,k+1}^{\rm curl,c}(\nabla\times\bm u_h))$ being the right hand side term, and we skip the details of the proof. Finally, we present the optimal error estimates in $\bm H({\rm curl})$-norm as follows.

\begin{proposition} \label{pro}
	\color{black} Let $(\bm u,p)\in \bm H_0({\rm curl}^2;\Omega)\times H_0^1(\Omega)$ be the weak solution of \eqref{PDE:orignial}, and $(\bm u_h, p_h)\in \bm E_h\times Q_h$ be the solution of \eqref{FEM}. \color{black}
	When $\Omega$ is convex and \color{black} the weak solution $(\bm u,p)\in \bm H_0({\rm curl}^2;\Omega)\times H_0^1(\Omega)$ of \eqref{PDE:orignial} is \color{black} sufficiently smooth,
	then we have the following error estimates
	\begin{align*}
	\|\bm u-\bm u_h\|_0+\|\nabla\times(\bm u-\bm u_h)\|_0\le Ch^{k+1}
	(\|\bm u\|_{k+1} +\|\nabla\times\bm u\|_{k+1}+\|p\|_{k+1}   ).
	\end{align*}
\end{proposition}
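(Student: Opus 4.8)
The plan is to obtain the proposition as a direct specialization of the two $\bm H({\rm curl})$-norm estimates already established, namely the $L^2$ bound in \Cref{dual-u} and the curl bound in the (unlabeled) curl-operator error estimate immediately preceding this proposition, under the two simplifications furnished by convexity and smoothness. The first step is to pin down the exponent $\sigma=\min(\alpha,\beta,\gamma)$. On a convex polyhedron the imbedding index of \Cref{embed} may be taken to be $\alpha=1$, and, as noted right after \Cref{ass2}, the dual regularity \eqref{reg} holds with $\beta=\gamma=1$. Hence $\sigma=1$, so every factor $h^{\sigma}$ in \Cref{dual-u} and in the curl estimate becomes a full power of $h$.

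Next I would fix the regularity indices at the values that produce the stated right-hand side. Since the exact solution is sufficiently smooth, \Cref{ass} holds for arbitrarily large $r_{u_0},r_{u_1},r_{u_2},r_p$, so the admissible ranges allow me to choose $s_{u_0}=s_{u_1}=s_p=k+1$ (all admissible, with $k\ge 1$ guaranteeing $s_p=k+1>\tfrac32$) and $s_{u_2}=k$. With these choices and $\sigma=1$, the three generic terms collapse exactly to the target rate: $h^{s_{u_0}}\|\bm u\|_{s_{u_0}}=h^{k+1}\|\bm u\|_{k+1}$, $\;h^{\sigma}h^{s_{u_1}-1}\|\nabla\times\bm u\|_{s_{u_1}}=h^{k+1}\|\nabla\times\bm u\|_{k+1}$, and $\;h^{\sigma}h^{s_p-1}\|p\|_{s_p}=h^{k+1}\|p\|_{k+1}$.

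The only term needing a small extra argument is the second-curl contribution $h^{\sigma}h^{s_{u_2}}\|\nabla\times(\nabla\times\bm u)\|_{s_{u_2}}$. With $s_{u_2}=k$ and $\sigma=1$ its prefactor is already $h^{k+1}$, and since the curl operator lowers Sobolev order by one I would bound $\|\nabla\times(\nabla\times\bm u)\|_{k}\le C\|\nabla\times\bm u\|_{k+1}$, which folds this piece into the $\|\nabla\times\bm u\|_{k+1}$ contribution and so does not enlarge the stated right-hand side. Substituting these choices into \Cref{dual-u} yields the bound for $\|\bm u-\bm u_h\|_0$, substituting them into the preceding curl-operator theorem yields the bound for $\|\nabla\times(\bm u-\bm u_h)\|_0$, and adding the two gives the claimed estimate.

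The step I expect to require the most care is the justification that $\sigma=1$, that is, that the three geometric/regularity exponents $\alpha,\beta,\gamma$ simultaneously attain their maximal value on a convex domain; once this is secured, the remainder is only a verification that $s_{u_0}=s_{u_1}=s_p=k+1$ and $s_{u_2}=k$ lie in the admissible intervals of \Cref{error estimates}, together with the elementary inequality $\|\nabla\times\bm w\|_{k}\le C\|\bm w\|_{k+1}$.
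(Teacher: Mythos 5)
Your proposal is correct and is precisely the specialization the paper intends: the proposition is stated without an explicit proof as a direct consequence of \Cref{dual-u} and the curl-operator estimate, with $\sigma=1$ on a convex domain (the paper itself notes $\beta=\gamma=1$ after \Cref{ass2}, and $\alpha=1$ is the standard convex-domain imbedding) and the index choices $s_{u_0}=s_{u_1}=s_p=k+1$, $s_{u_2}=k$. Your additional observation that $\|\nabla\times(\nabla\times\bm u)\|_{k}\le C\|\nabla\times\bm u\|_{k+1}$ absorbs the second-curl term is exactly the right bookkeeping to land on the stated right-hand side.
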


{
\color{black}
\section{Numerical experiments}
We consider the nonhomogeneous problem:
\begin{subequations}\label{PDE:orignial-non}
	\begin{align}
	\nabla\times(\nabla\times(\nabla\times(\nabla\times\bm u)))+\nabla p&=\bm f&\text{ in }\Omega,\\
	\nabla\cdot\bm u&=0&\text{ in }\Omega,\\
	\bm n\times\bm u&=\bm g_T &\text{ on }\partial\Omega,\\
	\bm n\times(\nabla\times\bm u)&=\bm m_T&\text{ on }\partial\Omega,\\
	p&=0&\text{ on }\partial\Omega,
	\end{align}
\end{subequations}
instead of \eqref{PDE:orignial}. As in \cite{MR3771897}, we take $\Omega=[0,1]^3$ and the exact solution $\bm u=(u_1,u_2,u_3)^T$ and $p$ read
\begin{align*}
u_1&=\sin (\pi y)\sin (\pi z),\\
u_2&=\sin (\pi z)\sin (\pi x),\\
u_3&=\sin (\pi x)\sin (\pi y),\\
p&=\sin(2\pi x)\sin(2\pi y)\sin(2\pi z).
\end{align*}
The functions
$\bm{f}$, $\bm g_T$ and $\bm m_T$ are determined according to \eqref{PDE:orignial-non} and the above exact solutions. We modify the scheme \eqref{FEM} to approximate the nonhomogeneous problem \eqref{PDE:orignial-non} as follows:

Find $\bm u_h\in \bm E_h^{\bm g_T}$ and $p_h\in Q_h$ such that
\begin{subequations}\label{FEM-non}
	\begin{align}
	\sum_{K\in\mathcal{T}_h}(\nabla\times(\nabla\times\bm u_h),\nabla\times(\nabla\times\bm v_h))_{K}+(\nabla p_h,\bm v_h)&\nonumber\\
	-\sum_{F\in\mathcal{E}_h}\langle\{\!\!\{ \nabla\times(\nabla\times\bm u_h)\}\!\!\},\bm n\times[\![\nabla\times\bm v_h]\!] \rangle_F&\nonumber\\
	\pm\sum_{F\in\mathcal{E}_h}\langle\{\!\!\{ \nabla\times(\nabla\times\bm v_h)\}\!\!\},\bm n\times[\![\nabla\times\bm u_h]\!] \rangle_F&\nonumber\\
	+\sum_{F\in\mathcal{E}_h}\tau h_F^{-1}\langle\bm n\times[\![\nabla\times\bm u_h]\!],\bm n\times[\![\nabla\times\bm v_h]\!] \rangle_F&\nonumber\\
	=(\bm f,\bm v_h)\pm\sum_{F\in\mathcal{E}_h\cap\partial\Omega}\langle\bm m_T, \nabla\times(\nabla\times\bm v_h)\rangle_{F}	&\nonumber\\
	+\sum_{F\in\mathcal{E}_h\cap\partial\Omega}\tau h_F^{-1}\langle \bm m_T,\bm n\times\nabla\times\bm v_h \rangle_F&,\\
	(\bm u_h,\nabla q_h)&=0
	\end{align}
	hold for all $(\bm v_h,q_h)\in \bm E_h\times Q_h$, where $\bm E_h^{\bm g_T}$ is defined as
	\begin{align*}
	\bm E_h^{\bm g_T}:=\{\bm v\in  [\mathcal P_{k+1}(\mathcal{T}_h)]^3:\bm n\times\bm v|_{\partial\Omega}=\bm{\Pi}_h^{\rm div}\bm g_T\},
	\end{align*}
	and $\bm{\Pi}_h^{\rm div}$ is some $H(\rm div)$-projection onto $k^{th}$ order piecewise polynomial spaces defined on $\mathcal E_h\cap\partial\Omega$.
\end{subequations}

\begin{table}[h]
	\caption{History of convergence for $k=0,1,2$\label{tab}}
	\centering
	\begin{tabular}{c|c|c|c|c|c|c}	
		\Xhline{1pt}
		\multirow{2}{*}{$k$} &
		\multirow{2}{*}{$h^{-1}$} &
		\multicolumn{2}{c|}{$\|\bm u-\bm u_h\|_0$} &
		\multicolumn{2}{c|}{$\|p-p_h\|_0$} &
		\multirow{2}{*}{DOF} \\
		\cline{3-6}
		
		& &Error &Rate  &Error &Rate &  \\
		\Xhline{1pt}

&	2	&	5.69E-01	&		&	2.60E-01	&		&	321 	\\
&	4	&	4.18E-01	&	0.44 	&	5.05E-02	&	2.36 	&	1937 	\\
0&	8	&	3.27E-01	&	0.35 	&	6.25E-03	&	3.01 	&	13281 	\\
&	16	&	2.81E-01	&	0.22 	&	7.37E-04	&	3.08 	&	97985 	\\
&	24	&	2.66E-01	&	0.14 	&	2.15E-04	&	3.04 	&	321697 	\\

		\Xhline{1pt}
		
&	2	&	1.48E-01	&	    	&	9.72E-02	&		&	997 	\\
&	4	&	5.72E-02	&	1.37 	&	8.77E-03	&	3.47 	&	6601 	\\
1&	8	&	1.96E-02	&	1.54 	&	5.57E-04	&	3.98 	&	47761 	\\
&	10	&	1.34E-02	&	1.72 	&	2.23E-04	&	4.09 	&	91381 	\\
&	12	&	9.65E-03	&	1.79 	&	1.06E-04	&	4.09 	&	155737 	\\

		\Xhline{1pt}
&	2	&	2.70E-02	&		&	3.18E-02	&		&	2273 	\\
&	4	&	2.61E-03	&	3.37 	&	1.44E-03	&	4.46 	&	15777 	\\
2&	6	&	5.72E-04	&	3.51 	&	1.99E-04	&	4.62 	&	50689 	\\
&	8	&	1.88E-04	&	3.79 	&	4.89E-05	&	4.88 	&	117185 	\\

		\Xhline{1pt}
	\end{tabular}	
\end{table}
We simply compute the symmetry case  and take $\tau=10$ for the numerical implementation.
The convergence results for $k=0,1,2$ are presented in \Cref{tab} (although we have not performed analysis for the case of $k=0$, we present the numerical results for a purpose of complete illustration). We can observe that the convergence rate  for $p_h$  is $k+3$  which is optimal;
the convergence rate for $\bm u_h$ is $0$ when $k=0$;
the convergence rate for $\bm u_h$ is $2$  which is not optimal when $k=1$ ;
 the convergence rate for $\bm u_h$ is $4$  which is optimal when $k=2$ .
All these results are in a good agreement with the theoretical analysis results.
}
\section*{\color{black}Acknowledgments}
{\small
Gang Chen is supported by National Natural Science Foundation of China (NSFC) under grant no. 11801063,
China Postdoctoral Science Foundation under grant no. 2018M633339 {\color{black} and 2019T120828}.
Weifeng Qiu is supported by Research Grants Council of the Hong Kong Special Administrative Region of China under grant no. CityU 11304017.
Liwei Xu is supported by a Key Project of the Major Research Plan of NSFC under grant no. 91630205 and NSFC under grant no. 11771068.
}

\bibliographystyle{siam}

\end{document}